\numberwithin{equation}{section}
\newcommand{\N}{\mathbb N}
\newcommand{\Z}{\mathbb Z}
\newcommand{\ol}{\overline}
\newcommand{\oo}{\infty}
\newcommand{\R}{\mathbb R}
\def\E{\mathbb E}
\def\P{\mathbb P}
\def\Xint#1{\mathchoice
{\XXint\displaystyle\textstyle{#1}}%
{\XXint\textstyle\scriptstyle{#1}}%
{\XXint\scriptstyle\scriptscriptstyle{#1}}%
{\XXint\scriptscriptstyle\scriptscriptstyle{#1}}%
\!\int}
\def\XXint#1#2#3{{\setbox0=\hbox{$#1{#2#3}{\int}$}
\vcenter{\hbox{$#2#3$}}\kern-.5\wd0}}
\def\dashint{\Xint-}
\numberwithin{equation}{section}
\newtheorem{thm}{Theorem}[section]
\newtheorem{lem}[thm]{Lemma}
\theoremstyle{definition}
\def\smallnegint{\mathop{\int\mkern-13mu
        \raise.5ex\hbox{${\scriptscriptstyle\diagup}$}}\nolimits}
\def\ds{\displaystyle}
\def\ep{\varepsilon}
\def\div{\operatorname{div}}
\def\ssetminus{\,\raise.4ex\hbox{$\scriptstyle\setminus$}\,}
\def \lg{\langle}
\def \rg{\rangle}
\newcommand{\be}{\begin{equation}}
\newcommand{\ee}{\end{equation}}
\renewcommand{\d}{d}
\newcommand{\Rd}{\mathbb{R}^\d}
\newcommand{\Zd}{\mathbb{Z}^\d}
\renewcommand{\bar}{\overline}
\renewcommand{\tilde}{\widetilde}
\renewcommand{\hat}{\widehat}
\renewcommand{\ln}{\log}
\begin{document}
\title{Scaling limits and stochastic homogenization  for some nonlinear parabolic equations }
\author[Pierre Cardaliaguet, Nicolas Dirr  and Panagiotis E. Souganidis]
{Pierre Cardaliaguet, Nicolas Dirr and Panagiotis E. Souganidis}
\address{Universit\'e Paris-Dauphine, PSL Research University, Ceremade, 
Place du Mar\'echal de Lattre de Tassigny, 75775 Paris cedex 16 - France}
\email{cardaliaguet@ceremade.dauphine.fr }
\address {Cardiff School of Mathematics, Cardiff University, Cardiff, UK}
\email{dirrnp@cardiff.ac.uk}  
\address{Department of Mathematics, University of Chicago, Chicago, Illinois 60637, USA}
\email{souganidis@math.uchicago.edu}
\vskip-0.5in 
\thanks{\hskip-0.149in Pierre Cardaliaguet was partially supported by the ANR (Agence Nationale de la Recherche) project  ANR-12-BS01-0008-01, by the CNRS through the PRC grant 1611 and by the Air Force Office for Scientific Research grant FA9550-18-1-0494. Nicolas Dirr was partially supported by Royal Society via International Exchange Grant  IE160514 - Probabilistic techniques for scaling limits of nonlinear differential equations and by  EPSRC via grant EP/M028607/1. Panagiotis Souganidis was partially supported by the National Science Foundation grants DMS-1600129 and DMS-1900599, the Office for Naval Research grant N000141712095, and the Air Force Office for Scientific Research grant FA9550-18-1-0494.}
\dedicatory{Version: \today}

\begin{abstract}
The aim of this paper is twofold. The first is to study the asymptotics of a parabolically scaled, continuous and space-time stationary in time version of the well-known Funaki-Spohn model in Statistical Physics. After a change of unknowns requiring the existence of a space-time stationary  eternal solution of a stochastically perturbed heat equation, the  problem transforms to the qualitative homogenization of a uniformly elliptic, space-time stationary, divergence form, nonlinear partial differential equation, the study of which  is the second aim of the paper. An important step is the construction of correctors with the appropriate behavior at infinity. 
\end{abstract}

\maketitle      

\begin{section}{Introduction}

The first aim of the paper is to study the limit, as $\ep\to 0$, of the stochastic partial differential equation (SPDE for short) 
\be\label{FS}
\left\{\begin{array}{l}
\ds d_tU^\ep_t= {\rm div} \mathcal{A}(D U^\ep_t, \dfrac{x}{\ep}, \dfrac{t}{\ep^2}, \omega_1)dt+\dfrac{1}{\ep} \sum_{k\in \Z^d}
A(\dfrac{x-k}{\ep}) dB^k_t \ \  {\rm in} \ \ \R^d\times (0,+\infty),\\ 
\ds U^\ep_0=u_0. \ \ \text{in} \ \ \R^d.
\end{array}\right.
\ee

In the above equation,  $(B^k)_{k\in \Z^d}$ is a sequence of independent $d-$dimensional Brownian motions in a probability space $(\Omega_0, \mathcal F_0, \P_0)$ with $\Omega_0= (C^0(\R,\R^d))^{\Z^d}$, and  $A:\R^d  \to \R^d$ is a smooth map with a compact support. Let $(\Omega_1, \mathcal F_1, \P_1)$ be another probability space endowed with a space-time ergodic  group of measure preserving transformations. The  vector field $\mathcal{A}:\R^d \times \R^d \times \R \times \Omega_1 \to \R$ is assumed to be smooth,  uniformly elliptic  and space-time stationary   in $(\Omega_1, \mathcal F_1, \P_1)$, 
and is  independent of the Brownian motions. The precise  assumptions are listed  in section~4. 
\smallskip

A reformulation of \eqref{FS} led us to the second aim of the paper.  This is the study of the qualitative (stochastic) homogenization of the divergence form quasilinear partial differential equation (PDE for short)
\be\label{pde}
u^\ep_t- \div a(Du^\ep, \dfrac{x}{\ep}, \dfrac{t}{\ep^2},\omega) =f \ \ \text{in}  \ \  \R^d\times (0,\infty) \quad u^\ep(\cdot,0)=u_0, 
\ee
where $a:\R^d \times \R^d \times \R \times \Omega \to \R^d$ is strongly monotone, Lipschitz continuous  and space-time stationary in an ergodic with respect to $\Z^d \times \R$-action random environment, which we denote again by  $(\Omega, \mathcal F, \P)$ although it is different than the one for \eqref{FS}, and $f$ and $u_0$ are square integrable. All the assumptions are made precise in section~3.

\smallskip

The result is that, in either case, there exists a strongly monotone map $\ol a:\R^d\to \R^d$ such that the solutions of \eqref{FS} and \eqref{pde} converge either a.s. or in expectation and in an appropriately weighted $L^2$-space in space-time to the unique solution $\ol u$ of the initial value problem
\be\label{ivp}
\ol u_t-\div \ol a(D\ol u)=f    \ \ \text{in}  \ \  \R^d\times (0,\infty) \quad \ol u(\cdot,0)=u_0.
\ee
\smallskip

The link between \eqref{FS} and \eqref{pde} is made writing $U^\ep$ as 
$$
U^\ep_t(x)= \ep \tilde V_{\frac{t}{\ep^{2}}}(\dfrac{x}{\ep}) +  \tilde W^\ep_t(x),
$$
with $\tilde V$ and $W^\ep$ been respectively the unique up to constants  eternal, space-time stationary solution of the stochastically perturbed heat equation
\be\label{l2}
d\tilde V_t= \Delta \tilde V_tdt+ \sum_{k\in \Z^d} A(x-k) dB^k_t \ \ \text{in} \ \ \R^d \times \R,
\ee
and 
the solution of the uniformly elliptic, divergence form PDE
\be\label{intro.homopb}
 \partial_t \tilde W^\ep_t =   {\rm div} \Big(\tilde a(D\tilde W^\ep, \dfrac{x}{\ep},\dfrac{t}{\ep^{2}},\omega)\Big) \ \  {\rm in} \ \ \R^d\times (0,+\infty) \quad  \tilde W^\ep_0= u_0 \ \ \text{in} \ \ \R^d,
\ee
with the random nonlinearity 
\be\label{m112}
\tilde a(p, x ,t, \omega)= \mathcal A(p+D\tilde V_t(x, \omega_0),  t,x, \omega_1) - D\tilde V_t(x,\omega_0) 
\ee
space-time stationary, strongly monotone and Lipschitz continuous.   
\smallskip

The existence and properties of $\tilde V$ are the topic of section~2. The construction is based on solving  the problem in $\R^d \times [-n^{-2},\oo)$ and then letting $n\to \oo$. To prove, however, the convergence to a unique up to constants stationary solution, it is necessary to obtain suitable gradient bounds. This requires, among others, the quantitative  understanding the long-space decorrelation properties of the gradients. For the latter, it is necessary to study in detail the properties of the gradients of localized versions of the stochastically perturbed heat equation, which depend on finitely many Brownian motions in balls of radius $R$, as $R\to \oo$. 
\smallskip

The study of the qualitative homogenization of \eqref{pde}, which is developed in  section~\ref{sec.homogen},  is based on the existence,  for each $p\in\R^d$, of space-time stationary solutions $\chi^p=\chi(y,\tau,\omega;p)$ of 
\be\label{eq.thetaw.1}
\partial_\tau \chi^p -{\rm div}( a(p+D\chi^p,y,\tau,\omega))= 0 \ \ \text{in} \ \ \R^d,  
\ee
such that,  as $\ep\to 0$, $\chi^\ep(x,t;p,\omega)=\ep  \chi^p(\frac{x}{\ep},\frac{t}{\ep^2},\omega) \to 0$  in $L^2_{loc}(\R^{d+1})$, 
$\P-$a.s. and in expectation.
\smallskip

The existence of correctors in our setting is, to the best of our knowledge, new. The difficulty arises from the unbounded domain and the lack of regularity in time. Overcoming it,  requires the development of new and sharp results. 
\smallskip

Once correctors are established, the homogenization follows, at least formally, using, at the level of test functions $\phi$, the expansion
\[ \phi^\ep(x,t)=\phi(x,t) +\ep \chi(\dfrac{x}{\ep}, \dfrac{t}{\ep^2}, \omega;D\phi(x,t)), \]
the justification of which creates additional problems due to the low available regularity of $\chi^p$ in $p$. To overcome it, it is necessary 
to introduce yet another level of approximation involving ``piecewise gradient correctors'' corresponding to piecewise constant approximations  of $D\phi$. 

\bigskip

Funaki and Spohn showed  in \cite{FS} the convergence of a system of interacting diffusion processes, modeling the height of a surface in $\R^d$,  to a deterministic limit equation.
\smallskip
 
More precisely, for any cube $\Lambda\subseteq \Zd$, they considered processes of the form
\be\label{fs1}
d\Phi_t(x)=-\sum_{|x-y|_1=1}V'(\Phi_t(x)-\Phi_t(y))+\sqrt{2}dB_t(x) \ \ \text{for} \ \  x\in \Lambda\subset \Zd . 
\ee
The fields $\Phi$ live on a discrete lattice and take values in $\R^d$,  $B_t(x)$ are i.i.d. Brownian motions,
$V'$ is the derivative of a strictly convex 
symmetric function, and $|\cdot |_1$ is the $l^1-$norm. Note that 
the drift term in \eqref{fs1} is simply the discrete divergence of the vector field
$(V'(D^+_i \Phi))_{i=1,\ldots,d},$ where $D^+_i \Phi(x)=\Phi(x+e_j)-\Phi(x)$ is the discrete forward partial derivative in direction $i$. 
\smallskip

The result in \cite{FS} is that the rescaled fields 
\be\label{fs2}
\Phi^\varepsilon(r,t)=\varepsilon\Phi_{\varepsilon^{-2}t}(x)\ \  {\rm for}\ \ r\in [x-\varepsilon/2,x+\varepsilon/2)^d \ \ \text{with} \ \ 
 N=[\varepsilon^{-1}]
 \ee
converge to the solution $h$ of the  nonlinear, divergence form deterministic PDE
\[
\partial_t h(r,t)={\rm div}(D \sigma(\nabla h)) \ \ \text{in} \ \ \R^d \times (0,\infty).\]

A crucial  step in the proof in  \cite{FS} is the existence of unique gradient Gibbs measures, that is, invariant measures for the discrete gradient of the fields which on finite subsets $\Lambda\subseteq \Zd$ defined by
$$
\frac{1}{Z}e^{\beta \sum_{x\in \Lambda }V(\nabla_i\Phi(x))} \prod_{x\in \Lambda}d\Phi(x).
$$

The SPDE \eqref{FS} we are considering here can be seen as   a continuous version of the equation satisfied by $\Phi^\varepsilon$ in  \eqref{fs2}. Our proof of the existence of the limit is purely dynamic, that is, it  does not use the existence of invariant measures of a certain structure. Instead, we use the  eternal solutions of a linear SPDE, which allows to transform the problem to one like \eqref{pde} with an appropriately defined field $a$. 
\smallskip

Although it may appear so,  results  about the convergence of the solution of $U^\ep_t$ and  $\Phi^\varepsilon$ 
are not, in any sense, equivalent. For example, the effective nonlinearities $\ol a$ and $h$ are, in general, not the same. To be able to compare the limit problems, it is necessary to understand in precise way how \eqref{FS} with $\ep=1$  is the continuous (mescopic) limit of \eqref{fs1}.
\smallskip

The qualitative stochastic homogenization result is new. We are, of course, aware of earlier  works of Efentiev and Panov \cite{EP1, EP2} and Efendiev, Jiang and Pankov \cite{EfJiPa}, which, however, do not apply to the general space-time stationary setting we are considering here. The crucial part of the proof is the existence of a space-time stationary corrector, which requires overcoming the low regularity in time. Beside the references \cite{EfJiPa, EP1, EP2} already quoted, the literature on the space-time homogenization of  parabolic equations in a random setting  is scarce and mostly devoted to linear equations, starting with the pioneering work of Zhikov, Kozlov and Oleinik \cite{ZKO}:  Landim, Olla and Yau \cite{LOY} provide an invariance invariance principle for diffusion in space-time random environment with a bounded  stream matrix;  Fannjiang and Komorowski \cite{FaKo} generalize the result to the case of unbounded stationary vector potentials while Komorowski and Olla \cite{KO} investigate the problem for divergence free vector fields; Rhodes \cite{Rh07} and Delarue and Rhodes \cite{DeRh09} study the homogenization of degenerate diffusions; more recently, Armstrong, Bordas and Mourrat \cite{ABM} provide a convergence rate for the homogenization of parabolic equation in space-time random environment under a finite range condition by using a variational structure for the equation. 

\subsection*{Organization of the paper} Section~2 is devoted to the study of the linear problem \eqref{l2}. In section~3 we concentrate on \eqref{pde}. The result about \eqref{FS} is presented in section~\ref{sec.homoFS}. Each of  section  consists of several subsections which are outlined there.  Finally, in the Appendix we include  some results about functions with stationary gradients that we use throughout the  paper. 



\subsection*{Notation}  Given $x_0\in \R^d$,  $Q_R(x_0)= x_0 + (-R/2,R/2)^d$ and $B_r(x_0)$ is the open ball in $\R^d$ centered at $x_0$ and radius $r$. Moreover,  $Q_R=Q_R(0)$, $I_R= (-R/2,R/2)$, $B_r=B_r(0)$,  and $\tilde Q_R= Q_R\times I_R=(-R/2,R/2)^{d+1} \subset \R^{d+1}$, while $\tilde Q$ and $\hat Q$ are  used for any cube in $R^{d+1}$. If $a, b\in \R$, $a\wedge b=\min\{a, b\}$ and  $a\lesssim_{\alpha} b$  means  that there exists a constant $C=C(\alpha)>0$ such that $a\leq C b$.  We write $a \underset{\alpha}\sim b$ if $a\lesssim_{\alpha} b$ and $b\lesssim_{\alpha} a$.  The  integer part of $s\in \R$ is $\lfloor s \rfloor$.  Given  $x\in \R^d$, $|x|_\infty= \max\{|x_i|: i=1, \dots, d\}$. We write $ {\bf 1}_A$ for the characteristic function of a set $A$ and, finally,  $\text{Int B}$ is the topological interior of $B\subset \R^k$.

\subsection*{Terminology} We say that a vector field $b:\R^d \to \Rd$ is strongly monotone and Lipschitz continuous  if the there exists $C_0>0$ such that, respectively and for all $p,q \in \R^d$, 
\be\label{monotone}
 (b(p)-b(q))\cdot (p-q) \geq C_0^{-1}|p-q|^2,
\ee
and
\be\label{Lip}
 |b(p)-b(q)|\leq C_0|p-q|.
 \ee

\begin{section} {The linear problem \eqref{l2}}\label{sec:linearpb}

The  goal here is to construct space-time stationary solutions of  the linear SPDE
\be\label{eq:main}
dV_t= \Delta V_tdt+ \sum_{k\in \Z^d} A(x-k) dB^k_t.
\ee

A building block is the properties of the solutions of the initial value problem
\be\label{l1}
\left\{\begin{array}{l}
\ds dV_t = \Delta V_t dt+  \sum_{k\in \Z^d} A(x-k) dB^k_t \ \ {\rm in} \ \  \R^d\times (0,+\infty),\\ 
\ds V_0=0 \ \  {\rm in } \ \  \R^d,
\end{array}\right.
\ee
since the solution of \eqref{eq:main} is going to be obtained as the limit of solutions of 
\be\label{ln}
\left\{\begin{array}{l}
\ds dV^n_t(x)= \Delta V^n_t(x)dt+ \sum_{k\in \Z^d} A(x-k) dB^k_t \ \ \text{in} \ \ \R^d\times (-n^2, \oo), \\ 
\ds V^n_{-n^2}=0 \ \  {\rm in } \ \  \R^d.
\end{array}\right.
\ee
It is immediate that $V^n$ satisfy bounds similar to the ones of the solution of \eqref{l1}.

\smallskip

We divide the presentation into a a number of subsections. In subsection~\ref{ass1} we introduce the assumptions we need to study the problem and state the result. In subsection~\ref{aux1} we prove a number of basic estimates for the solution of \eqref{l1}. These estimates are not sufficiently strong in order to let $n\to \oo$ in \eqref{ln}. In subsection~\ref{dec} we obtain some new stronger estimates  taking advantage of the independence at large distances of the Brownian motions.    The proof of Theorem~\ref{thm:main} is presented in  subsection~\ref{proof1}.
\smallskip

\subsection{The assumptions and result}\label{ass1}
We assume that
\be\label{B} 
\begin{cases} \text{The family $(B_k)_{k\in \Z}$  consists  of continuous and independent  $d-$dimensional }\\
\text{ processes defined on the probability space $(\Omega_0, \mathcal F_0, \P_0)$ with}\\
\text{ $\Omega_0=(C(\R;\R^d))^{\Z^d}$ \ such that, for any $t_0\in \R$, $(B^k_t-B^k_{t_0})_{t\geq t_0}$}\\
\text{ is a Brownian motion,} 
\end{cases}
\ee
and 
\be\label{A}
\begin{cases}
\text{ the map $A:\R^d\to \R^d$ is smooth and has compact support}\\
\text{ in the ball $B_{R_0}$ for some $R_0>0$}.
\end{cases}
\ee
The assumptions  on $A$ are made for simplicity and can be relaxed. Moreover, since the coefficients of the noise in \eqref{eq:main} are deterministic, the question of whether we need to use It\^{o}'s or Stratonovich stochastic differential  does not arise here.
\smallskip

In the context of \eqref{eq:main},   a process is stationary,  if it is  adapted to the filtration generated by the $(B^k)_{k\in \Z^d}$ with a law which is invariant by translation in time and integer translation in space.  
\smallskip

The existence of a unique up to constants stationary solution of \eqref{eq:main} is the subject of the next theorem. In what follows by solution we mean a map $Z:\R^d\times\R \times \Omega_0 \to \R$ such that, for any $x \in \R^d$, $s,t \in \R$ with $s<t$ and $\P-$ a.s. $\omega_0 \in \Omega_0$, 
\be\label{m115}
Z_t(x, \omega_0)= Z_s(x, \omega_0) + \sum_{k\in \Z^d}  \int_0^t \int_{\R^d} p(x-y,t-r)  A(y-k) dy dB^k_r(\omega_0),  
\ee
where $p=p(x,t)$ the heat kernel, that is, the fundamental solution to the heat equation in $\R^d\times (0,\oo)$.
\smallskip

\begin{thm}\label{thm:main}
Assume \eqref{B} and \eqref{A}. There exists a unique  process $Z:\Omega_0\times \R^d\times \R\to \R^d$ with
$$
{\mathbb E}\left[\int_{ \tilde Q_1}|Z_t(x)|^2dxdt\right]<\infty,
$$ 
satisfying, for any $i=1, \dots, d$,  
$$
dZ_{i,t}(x) = \Delta Z_{i,t}(x)dt+    \sum_{k\in \Z^d} D_{x_i}A(x-k) dB^k_t.
$$
In addition, $Z$ is an attractor for \eqref{eq:main} in the sense that, if $V$ is a solution of \eqref{eq:main} in $\R^d\times (0,\oo)$ such that $V(\cdot,0)=0$, 
then 
\be\label{attractor}
\lim_{t\to+\infty} \E\left[ \int_{Q_1} |DV_t(x)- Z_t(x)|^2dx \right] =0. 
\ee
Moreover, for $d\ge 3,$  there exists a unique up to constants space-time stationary adapted process $V:\Omega\times \R^d\times \R\to \R$ solving  \eqref{eq:main} in $\R^d\times \R$ 
such that  
$$
{\mathbb E}\left[\int_{ \tilde Q_1}|V_t(x)|^2dxdt\right]<\infty.
$$ 
\end{thm}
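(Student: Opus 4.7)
The plan is to obtain $Z$ as the $L^2$-limit of $Z^n := DV^n$, where $V^n$ solves the initial value problem \eqref{ln}. By Duhamel, each $Z^n$ is a centered Gaussian process,
\begin{equation*}
Z^n_{i,t}(x) = \sum_{k\in\Z^d}\int_{-n^2}^t\!\int_{\R^d} p(x-y,t-r)\, D_{y_i}A(y-k)\, dy\, dB^k_r,
\end{equation*}
and a time shift of $V^n$ by $n^2$ leaves the joint law of the noise invariant, so any limit will automatically be space-time stationary. The central task is to prove that $\{Z^n\}$ is Cauchy in $L^2(\Omega_0\times\tilde Q_1)$. For $m>n$ the difference $Z^m-Z^n$ is itself an It\^o integral supported on the time window $(-m^2,-n^2)$; by the It\^o isometry its variance reduces to an integral of the square of the heat kernel against the $\Z^d$-periodization of the quadratic form in $DA$. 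The standard $k$-by-$k$ heat-kernel bound from subsection~\ref{aux1} is too weak to close this estimate, and this is where the sharper decorrelation estimates of subsection~\ref{dec}, which quantitatively exploit the independence of the $B^k$'s across large distances, are essential. Passing to the limit in the Duhamel formula then shows that the limit $Z$ solves the stated equation.

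For the attractor property, if $V$ solves \eqref{eq:main} with $V_0=0$, then $W_t := DV_t-Z_t$ satisfies the deterministic heat equation $\partial_t W=\Delta W$ on $\R^d\times(0,\infty)$ with $W_0=-Z_0$ pathwise in $\omega_0$. Hence $W_t=-p(\cdot,t)*Z_0$, and since $Z_0$ is centered and spatially stationary with finite second moment, a direct computation of the covariance of $W_t$ via the heat kernel yields $\mathbb{E}\int_{Q_1}|W_t|^2 \to 0$, which is \eqref{attractor}. Uniqueness of $Z$ follows by the same idea: the difference $Y:=Z^1-Z^2$ of two stationary $L^2$ solutions is a pathwise-deterministic space-time stationary solution of the heat equation, and the energy identity combined with stationarity of its second moment forces $DY\equiv 0$ and $\partial_t Y\equiv 0$, so $Y$ is a random constant, which vanishes by the mean-zero property of gradients of stationary fields recalled in the Appendix.

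For the final assertion in dimension $d\geq 3$, the plan is to define
\begin{equation*}
V_t(x) := \sum_{k\in\Z^d}\int_{-\infty}^t\!\int_{\R^d} p(x-y,t-r)\, A(y-k)\, dy\, dB^k_r
\end{equation*}
and to check finiteness of its second moment. The It\^o isometry and summation over $k$ convert $\mathbb{E}[V_t(x)^2]$ into an integral over $s>0$ of the Gaussian integral of $p(\cdot,s)^2$ against the smooth periodized self-correlation of $A$; the resulting integrand decays like $s^{-d/2}$ at infinity, giving integrability precisely when $d\geq 3$. Stationarity is built into the representation, and uniqueness up to constants follows from the same deterministic heat-equation argument used for $Z$. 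The main obstacle throughout is the Cauchy step for $Z^n$: the naive $k$-wise variance bound is not summable in the lattice shifts, and the decorrelation estimates of subsection~\ref{dec} are precisely what is needed to turn a formally delicate sum into a genuine Cauchy estimate.
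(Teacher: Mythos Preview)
Your direct It\^o-isometry route to the Cauchy step is correct and in fact \emph{simpler} than the paper's argument, but you have misdiagnosed where the decorrelation estimates enter. Writing $Z^m_t-Z^n_t$ as an It\^o integral over $(-m^2,-n^2)$ and applying the isometry gives
\[
\E\bigl[|Z^m_{i,t}(x)-Z^n_{i,t}(x)|^2\bigr]=\sum_{k\in\Z^d}\int_{-m^2}^{-n^2} F_k(r)^2\,dr,
\]
with $F_k$ exactly as in Lemma~\ref{lem.intermezzo}. Estimate~\eqref{(a)} there already yields $\sum_k F_k(r)^2\lesssim (t-r)^{-(1+d/2)}$, hence the bound $(t+n^2)^{-d/2}$ for the variance. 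No decorrelation is needed: the It\^o isometry has already diagonalized the sum in $k$. The paper instead represents $Z^m_t-Z^n_t$ as the heat semigroup at time $t+n^2$ applied to $Z^m_{-n^2}$; this produces a double sum over $k,k'$ coming from the covariance $\E[Z^m_{-n^2}(y)\cdot Z^m_{-n^2}(y')]$, and it is \emph{that} representation which requires Lemma~\ref{lem.VVlR} and yields the weaker rate $(t+n^2)^{-(1\wedge d/4)}$ of Lemma~\ref{lem.Cauchy}.

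Your attractor argument, by contrast, has a real gap. From $W_t=-p(\cdot,t)\ast Z_0$ you claim that ``centered, spatially stationary with finite second moment'' forces $\E\int_{Q_1}|W_t|^2\to 0$. It does not: a nonzero mean-zero random constant is a counterexample. One computes $\E|W_t(x)|^2=\int p(z,2t)\,C(z)\,dz$ with $C(z)=\E[Z_0(0)\cdot Z_0(z)]$, and decay requires decay of $C$ at infinity---precisely what the decorrelation Lemma~\ref{lem.VVlR} supplies (passed to the limit). The simpler fix is to notice that your own Cauchy bound with $n=0$ already gives $\E[|Z^m_t-DV_t|^2]\lesssim t^{-d/2}$, and letting $m\to\infty$ yields~\eqref{attractor}; this is exactly the paper's deduction of the attractor property from Lemma~\ref{lem.Cauchy}.

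Finally, your uniqueness sketch via an ``energy identity'' is too loose on the full space: there is no global $L^2$ structure to integrate by parts against. The paper instead invokes the parabolic interior gradient estimate $\max_{\tilde Q_{r/2}}|Du|\le Cr^{-(d+3)}\int_{\tilde Q_r}|u|$ for entire caloric functions, takes expectations, uses stationarity of the second moment, and sends $r\to\infty$. Your time-stationarity claim for the limit is also too quick; the paper deduces it from uniqueness (the time-shifted process solves the same equation with shifted Brownian motions and hence has the same law).
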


We remark that, when $d\leq 2$, the correctors have  stationary gradients  but are not themselves stationary. 

\subsection{Auxiliary results}\label{aux1} We concentrate here on the properties of the solutions of the auxiliary initial value problem. 
\smallskip

%
%
%

%
The first result is about a representation formula for the solution of \eqref{l1} as well as preliminary integral bounds on its derivatives. 
\smallskip

Note that  the forcing term in \eqref{l1} is periodic only in law and not pointwise. Hence, all the estimates need involve expectation.

\begin{lem}\label{lem.boundDV} Assume \eqref{B} and \eqref{A}. Then 
\begin{equation}\label{rep.sol}
V_t(x)= \sum_{k\in \Z^d}  \int_0^t \int_{\R^d} p(x-y,t-s)  A(y-k) dy dB^k_s  
\end{equation}
is a stationary in space with respect to integer  translations solutions,  
solution $V$ of \eqref{l1}. Moreover, for all $t\geq 0$, 
\begin{equation}\label{ineq.timecont}
\sup_{x\in \R^d} \E[|D V_t(x)|^2]+\E[|D^2 V_t(x)|^2] \lesssim_{A,d} (t\wedge 1), 
\end{equation}
and
\be\label{ineq.timecontBIS}
\E[|V_t(x)|^2] \lesssim_{A,d}  t\wedge  \left\{\begin{array}{ll}  1 & {\rm if}\; d\geq 3,\\[1.2mm]  \ln(t+1) &  {\rm if}\; d=2,\\[1.2mm]
t^{1/2} & {\rm if}\; d=1.\end{array}\right.  
\ee 
\end{lem}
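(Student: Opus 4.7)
The representation formula \eqref{rep.sol} is the standard Duhamel/mild formulation for the linear SPDE \eqref{l1}: because the forcing is additive and the drift is the generator of the heat semigroup $P_\tau = e^{\tau\Delta}$, the expression
$V_t(x)= \sum_{k\in\Z^d}\int_0^t P_{t-s}A(\cdot-k)(x)\cdot dB^k_s$
satisfies \eqref{l1} in the mild sense and, by standard stochastic Fubini/It\^o arguments together with the smoothness of $A$, also in the sense of \eqref{m115}. Translation by an integer $n\in\Z^d$ is handled by relabeling $k\mapsto k+n$ and $y\mapsto y+n$ in \eqref{rep.sol}; since the family $(B^{k+n})_{k\in\Z^d}$ has the same joint law as $(B^{k})_{k\in\Z^d}$, the resulting process has the same law, giving stationarity in space under $\Z^d$-translations.

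For the quantitative bounds, It\^o's isometry reduces matters to the deterministic sum
$$\E[|DV_t(x)|^2]=\sum_{k\in\Z^d}\int_0^t |DP_{t-s}A(\cdot-k)(x)|^2\,ds,$$
and analogously for $D^2V$ and $V$ itself. The plan is to split the $s$-integral at $\tau:=t-s=1$. For short times $\tau\le 1$, I would commute the derivative with $P_\tau$ (using smoothness of $A$) to write $DP_\tau A(\cdot-k)=P_\tau DA(\cdot-k)$, then apply Jensen's inequality $|P_\tau f|^2\le P_\tau|f|^2$ to bound
$$\sum_k |P_\tau DA(\cdot-k)(x)|^2\le P_\tau\Big(\sum_k |DA(\cdot-k)|^2\Big)(x)\lesssim_{A}1,$$
the last estimate coming from the fact that $\sum_k|DA(\cdot-k)|^2$ is $\Z^d$-periodic and bounded since $A$ has compact support. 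This contributes $\lesssim \tau$ to the short-time piece, hence $\lesssim (t\wedge 1)$.

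For large times $\tau\ge 1$, I would integrate by parts to move the derivative onto $p_\tau$ and exploit that $DA$ has zero mean: writing
$$DP_\tau A(\cdot-k)(x)=\int\bigl(p_\tau(x-y)-p_\tau(x-k)\bigr)DA(y-k)\,dy,$$
a Taylor expansion in the variable $y$ (restricted to the ball $B_{R_0}(k)$) yields $|DP_\tau A(\cdot-k)(x)|\lesssim_{A}\tau^{-(d+1)/2}e^{-|x-k|^2/(c\tau)}$; summing its square over $k$ and comparing the resulting series to a Gaussian integral gives $\sum_k |DP_\tau A(\cdot-k)(x)|^2\lesssim \tau^{-(d+2)/2}$, which is integrable in $\tau$ on $[1,\infty)$ uniformly in $x$. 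The analogous treatment with one more derivative handles $D^2V$.

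The bound \eqref{ineq.timecontBIS} on $V$ itself is proved by the same splitting, but now one does \emph{not} gain the cancellation $\int A=0$ (in general). For $\tau\le 1$ one still gets $\sum_k|P_\tau A(\cdot-k)(x)|^2\lesssim 1$ by Jensen, contributing $\lesssim(t\wedge1)$. For $\tau\ge 1$, using the Gaussian bound $|P_\tau A(\cdot-k)(x)|\lesssim \tau^{-d/2}e^{-|x-k|^2/(c\tau)}$ and comparing $\sum_k e^{-|x-k|^2/(c\tau)}$ to a Gaussian integral of mass $\tau^{d/2}$ gives $\sum_k|P_\tau A(\cdot-k)(x)|^2\lesssim \tau^{-d/2}$; integrating $\int_1^t \tau^{-d/2}d\tau$ produces the three regimes $1$, $\ln(t+1)$, $t^{1/2}$ for $d\ge 3,\, d=2,\, d=1$ respectively. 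The main technical point—and the only place where real care is needed—is carrying out the large-$\tau$ analysis so that the Gaussian decay in $|x-k|$ is preserved through the integration by parts and survives the summation over $k$; this is what yields the faster-than-integrable rate $\tau^{-(d+2)/2}$ for $DV$ and $D^2V$ and hence the $d$-independent bound $t\wedge 1$ there, in contrast to the dimension-sensitive behavior of $V$.
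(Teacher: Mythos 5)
Your proposal is correct and follows the paper's overall architecture: It\^o's isometry reduces everything to deterministic sums of squared heat-semigroup integrals, the $s$-integral is split at $\tau=t-s=1$, and one compares lattice sums with Gaussian integrals. Where your execution differs from the paper's is in the two estimates, and both of your variants are valid. For the short-time piece $\tau\le 1$ you invoke Jensen's inequality $|P_\tau f|^2\le P_\tau(|f|^2)$ together with the fact that $\sum_k|DA(\cdot-k)|^2$ is $\Z^d$-periodic and bounded, yielding a one-line bound; the paper instead splits the $k$-sum into the finitely many near $k$ with $|k|<2(R_0+2)$, controlled by the crude bound $F_k^2\le\|DA\|_\infty^2$ of \eqref{(c)}, and the far $k$, controlled by the Gaussian tail of \eqref{(b)}, arriving at the same $\lesssim t\wedge 1$ contribution. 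For the long-time piece you gain the crucial extra $\tau^{-1/2}$ decay by subtracting $p_\tau(x-k)$ and appealing to $\int DA=0$; the paper obtains exactly the same gain without any zero-mean cancellation, directly from the identity $Dp(z,\tau)=-\tfrac{z}{2\tau}\,p(z,\tau)$, which gives $F_k(s)\lesssim_A (t-s)^{-1-d/2}|k|\,e^{-|k|^2/(16(t-s))}$ for large $|k|$ and hence $\sum_k F_k(s)^2\lesssim(t-s)^{-(1+d/2)}$ as in \eqref{(a)}. Both routes give the same decay rate and the same final bounds; your plan, once fleshed out, would constitute a complete proof, and there is no gap.
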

\begin{proof} It is immediate that the $V$ given in \eqref{rep.sol} satisfies \eqref{m115}  for any $0<s<t$ and, hence, is a solution of \eqref{l1}.   It also  follows from \eqref{rep.sol}  and the fact that the $B^k$'s  are identically distributed that $V$ is stationary in space under integer translations. Hence,  we only  need to prove the estimates for $x\in Q_1$. 
\smallskip

It\^{o}'s isometry and \eqref{rep.sol} yield  
\begin{align*}
\E\left[ |D V_t(x)|^2\right] & = \sum_{k\in \Z^d}    \int_0^t \left| \int_{\R^d} D p(x-y,t-s)  A(y-k) dy\right|^2 ds .
\end{align*}
For $k\in \Z$ and $s\geq 0$, let 
\be\label{Fk}
F_k(s)= \left| \int_{\R^d} D p(x-y,t-s)  A(y-k) dy\right|= \left| \int_{\R^d}  p(x-y,t-s)  D A(y-k) dy\right|.
\ee

To proceed we need the following lemma. Its proof is presented after the end of the ongoing one. 

\begin{lem}\label{lem.intermezzo}
Assume \eqref{B} and \eqref{A} and, for $k\in \Z$ and $s\geq 0$, let $F_k(s)$ be given by \eqref{Fk}. Then
\begin{eqnarray}
 \sum_{k\in \Z^d} F_k(s)^2 &\lesssim_{A,d}& (t-s)^{-(1+d/2)},  \label{(a)}\\
\sum_{k\in \Z^d, \, |k|\geq 2(R_0+2)} F_k(s)^2  &\lesssim_{A,d}&(t-s)^{-1-d/2}\exp(-4R_0^2/(17(t-s))),  \label{(b)}\\
F_k(s)^2  &\leq&\|D A\|_\infty^2.  \label{(c)}
\end{eqnarray}
\end{lem}

We continue with the proof of Lemma~\ref{lem.boundDV}. 
\smallskip

The arguments depend on whether $t\geq 1$ or $t<1$.
\smallskip

If $t\geq 1$, we observe that  there are only finitely many $k$ with 
$ |k|< 2(R_0+2)$ and we find, using Lemma~\ref{lem.intermezzo}, that 
\begin{align*}
&\E\left[ |D V_t(x)|^2\right]  \\
&   \leq  \sum_{k\in \Z^d} \int_0^{t-1} F_k(s)^2ds + \sum_{k\in \Z^d, \ |k|\geq 2(R_0+2) } \int_{t-1}^t F_k(s)^2ds
+ \sum_{k\in \Z^d, \ |k|< 2(R_0+2) }\int_{t-1}^t F_k(s)^2ds  \\[1.5mm]
& \lesssim_{A,d}  \left(\int_0^{t-1} (t-s)^{-(1+d/2)}ds+ \int_{t-1}^t (t-s)^{-1-d/2}\exp\{-4R_0^2/(17(t-s))\}ds\right) \\
& \qquad \qquad +  \sum_{k\in \Z^d, \ |k|< 2(R_0+2) }\int_{t-1}^t \|D A\|_\infty^2ds \\ 
& \lesssim_{A,d} \left(  1 + \int_0^1 s^{-1-d/2}\exp\{-4R_0^2/(17s)\}ds\right) \lesssim_{A,d} 1. 
\end{align*}
If $t\in (0,1]$,  using  \eqref{(b)} and \eqref{(c)}, we obtain
\begin{align*}
\E\left[ |D V_t(x)|^2\right] &  \leq  \sum_{k\in \Z^d, \ |k|\geq 2(R_0+2) } \int_{0}^t F_k(s)^2ds
+ \sum_{k\in \Z^d, \ |k|< 2(R_0+2) }\int_{0}^t F_k(s)^2ds \\
& \lesssim_{A,d}  \left( \int_0^t (t-s)^{-1-d/2}\exp\{-4R_0^2/(17(t-s))\}ds + t\right) \\ 
& \lesssim_{A,d}  \left(\int_0^t s^{-1-d/2}\exp\{-4R_0^2/(17s)\}ds + t\right) \lesssim_{A,d}  t. 
\end{align*}
Since the structure of the formula  for $D^2V$ is exactly the same as the one for $DV$,  \eqref{ineq.timecont} is proved similary. The only difference is that now  the  constants depend  on $\|A\|_{C^2}$ too.
\smallskip

To estimate $V_t(x)$, recalling that,  for any $x,s,t$,
$\int_{\R^d} p(x-y,t-s)dy = 1,$
we find 
\begin{align*}
& \E\left[ | V_t(x)|^2\right]  \leq \|A\|_\infty^2  \sum_{k\in \Z^d}    \int_0^{t} \left| \int_{B_{R_0+2}(k)} p(x-y,t-s) dy\right|^2 ds \\
& \qquad \lesssim_{A,d}  \big(\sum_{k\in \Z^d}    \int_0^{t-1} \int_{B_{R_0+2}(k)} p^2(x-y,t-s) dy ds\\
& \qquad \qquad + 
\int_{(t-1)\vee 0}^t  \sum_{|k|\geq R_0+3} \int_{B_{R_0+2}(k)} p^2(x-y,t-s)dy \ ds
+  (t- (t-1)\vee 0) \big).
\end{align*}
The first term in the right-hand side can be estimated by 
\begin{align*}
& \int_0^{(t-1)\vee 0}\sum_{k\in \Z^d} \int_{B_{R_0+2}(k)} p^2(x-y,t-s)dy ds\lesssim_{R_0} \int_0^{(t-1)\vee 0}\int_{\R^d} p^2(x-y,t-s)dyds\\
& \qquad \lesssim_{R_0}\int_0^{(t-1)\vee 0} (t-s)^{-d/2}ds \lesssim_{R_0,d}  {\bf 1}_{t\geq 1}  \left\{\begin{array}{ll}  1 & {\rm if} \ \ d\geq 3,\\ \ln(t+1)&  {\rm if} \ \ d=2,\\
t^{1/2} & {\rm if} \ \  d=1.\end{array}\right.
\end{align*}
As for second term in the right-hand side, we have 
\begin{align*}
 \int_{(t-1)\vee 0}^t & \sum_{|k|\geq R_0+3} \int_{B_{R_0+2}(k)} p^2(x-y,t-s)dy \ ds\\
 & \lesssim_{R_0}  \int_{(t-1)\vee 0}^t  \int_{B_1^c} p^2(x-y,t-s)dy \ ds \\
&  \lesssim_{R_0}  \int_{(t-1)\vee 0}^t (t-s)^{-d} \int_1^{+\infty} r^{d-1} \exp\{-r^2/(t-s)\}  dr ds \\
& \lesssim_{R_0,d} \int_{(t-1)\vee 0}^t (t-s)^{1-d} \exp\{-1/(2(t-s))\} ds \lesssim_{R_0,d} (t\wedge 1).
\end{align*}
The proof of  \eqref{ineq.timecontBIS} is now complete. 

\end{proof}

We present  now the proof of Lemma~\ref{lem.intermezzo}.
\begin{proof}[Proof of Lemma \ref{lem.intermezzo}]
It follows from \eqref{A} and the fact that $x\in Q_1\subset B_2$ that 
\begin{align*}
F_k(s)\leq  & \int_{\R^d} \left| D p(x-y,t-s)  A(y-k) \right| dy \leq \|A\|_\infty \int_{B_{R_0+2}} \left| D p(k-y,t-s)   \right| dy \\
 &  \lesssim_{A} (t-s)^{-1-d/2}   \int_{B_{R_0+2}} |k-y| \exp\{-|k-y|^2/(2(t-s))\} dy .
 \end{align*}
 If   $|k|\geq 2(R_0+2)$, then, for any $y\in  B_{R_0+2}$, we have 
\begin{align*} 
 |k-y| \exp\{-&|k-y|^2/(2(t-s))\}\\
 & \leq  (|k|+(R_0+2)) \exp\{ -|k|^2/(4(t-s)) +(R_0+2)^2/ (2(t-s))\}\\
 & \leq  (|k|+(R_0+2)) \exp\{ -|k|^2/(8(t-s))\}
\\
 & \lesssim_{A}  |k| \exp\{ -|k|^2/(16(t-s))\}.
 \end{align*}
Thus,
\[
F_k(s) \lesssim_{A,d}  \begin{cases}
 (t-s)^{-1-d/2}  |k| \exp\{-|k|^2/(16(t-s))\} \ \  {\rm  if}\ \  |k|\ge 2(R_0+2),\\[1.5mm] 
 (t-s)^{-1-d/2} \  {\rm  if}\  |k|\le 2(R_0+2).
\end{cases}
\]
Then \eqref{(a)} follows, since
\begin{align*}
&  \sum_{k\in \Z^d} F_k(s)^2  \lesssim_{A,d}    (t-s)^{-2-d}(1+ \sum_{k\in \Z^d}   |k|^2 \exp\{-|k|^2/(8(t-s))\}) \\[1.5mm]
& \lesssim_{A,d}  (t-s)^{-2-d}(1+ \int_{\R^d}   |z|^2 \exp\{-|z|^2/(16(t-s))\}dz )\lesssim_{A,d}  (t-s)^{-(1+d/2)}. 
 \end{align*}
 
For \eqref{(b)}, using that, for all $r\geq 0$, $r^{d+1} \exp\{-r^2/16\} \lesssim r\exp\{-r^2/17\}$,  we get 
\begin{align*}
&  \sum_{k\in \Z^d, \, |k|\geq 2(R_0+2)} F_k(s)^2  \lesssim_{A,d}    (t-s)^{-2-d}\sum_{k\in \Z^d, \, |k|\geq  2(R_0+2)}   |k|^2 \exp\{-|k|^2/(8(t-s))\} \\ 
& \lesssim_{A,d}   (t-s)^{-2-d}\int_{B_{ 2R_0}^{c}}   |z|^2 \exp\{-|z|^2/(16(t-s))\}dz, \\
& \lesssim_{A,d}   (t-s)^{-1-d/2}\int_{ 2R_0(t-s)^{-1/2}}^{+\infty} r^{d+1} \exp\{-r^2/16\}dr, \\
& \lesssim_{A,d}   (t-s)^{-1-d/2}\exp(-4R_0^2/(17(t-s))).
 \end{align*}

Finally, \eqref{(c)} is straightforward, since 
\begin{align*}
F_k(s) & \leq \|D A\|_\infty \int_{\R^d} p(x-y,t-s)dy = \|D A\|_\infty. 
 \end{align*}

 \end{proof}
\subsection{The decorrelation estimates}\label{dec}
We show that the solution $V$ of \eqref{l1} decorrelates in space. 
\smallskip

To quantify  this property,  we consider solutions of a localized versions of \eqref{l1}, that is, problems  that depend only on the Brownian motions in a certain ball. 
\smallskip

For  $l \in \Z$ and $R\geq 1$, let  $V^{l,R}$ be the solution to 
\be\label{eq:mainLinR}
\left\{\begin{array}{l}
\ds dV^{l,R}_t= \Delta V^{l,R}_tdt + \sum_{k\in \Z^d, \ |k-l|\leq R} A(x-k) dB^k_t \ \  {\rm in}  \ \ \R^d\times (0,+\infty),\\ 
\ds V^{l,R}_0=0 \ \ \text{in} \ \ \R^d.
\end{array}\right.
\ee

\begin{lem}\label{lem.VVlR} Assume \eqref{B} and \eqref{A} and let $V$ be the solution to \eqref{l1}. Then there exists $R_1>0$ such that, for any $R\geq R_1$, $l\in \Z^d$ and $x\in Q_1(l)$,  
\begin{equation}\label{liqendsfdxgc}
\E\left[|DV_t(x)-DV^{l,R}_t(x)|^2\right] \lesssim_{A,d}\left\{\begin{array}{ll}
 R^{-d} &{\rm if } \; R^2/t\leq 1,\\[1.5mm] 
 \exp\{-R^2/(5t)\} &{\rm otherwise,} \end{array},\right.
\end{equation}
and
\begin{equation}\label{lkjnefjjn}
\sup_{t\geq 0} \E\left[|DV^{l,R}_t(x)|^2\right]  \lesssim_{A,d} 1.
\end{equation}

If $d\geq 3$, then, for all $R\geq R_1$,  
\begin{equation}\label{liqendsfdxgcBIS}
\E\left[\left(V_t(x)-V^{l,R}_t(x)\right)^2\right] \lesssim_{A,d} \left\{\begin{array}{ll}
 R^{2-d} &{\rm if } \; R^2/t\leq 1,\\[1.5mm] 
 \exp\{-R^2/(9t)\} &{\rm otherwise}, \end{array}\right.
\end{equation}
and
\begin{equation}\label{lkjnefjjnBIS}
\sup_{t\geq 0} \E\left[(V^{l,R}_t(x))^2\right]  \lesssim 1.
\end{equation}
\end{lem}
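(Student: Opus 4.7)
The plan is to use the representation formula \eqref{rep.sol}: since $V^{l,R}$ is driven exactly by the Brownian motions $B^k$ with $|k-l|\leq R$, It\^o's isometry applied to $DV_t(x)-DV^{l,R}_t(x)$ yields, for $x\in Q_1(l)$,
\begin{equation*}
\E\bigl[|DV_t(x)-DV^{l,R}_t(x)|^2\bigr] \;=\; \sum_{|k-l|>R}\int_0^t F_k(s)^2\,ds,
\end{equation*}
with $F_k$ as in \eqref{Fk}. The key geometric input is that for any $R_1\geq 2R_0+\sqrt{d}$ and $|k-l|>R\geq R_1$, every $y$ in the support of $A(\cdot-k)$ satisfies $|x-y|\geq |k-l|/2$, since $|x-l|\leq \sqrt{d}/2$ and $|y-k|\leq R_0$. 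Combined with the standard heat-kernel estimate $|Dp(z,r)|^2\lesssim_d r^{-d-1}\exp\{-|z|^2/(4r)\}$, this yields
\begin{equation*}
F_k(s)^2 \;\lesssim_{A,d}\; (t-s)^{-d-1}\exp\bigl\{-|k-l|^2/(16(t-s))\bigr\}.
\end{equation*}

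I then compare the lattice sum to the corresponding Gaussian integral and use a tail estimate, valid once $R^2/(t-s)$ is bounded below, to obtain
\begin{equation*}
\sum_{|k-l|>R} F_k(s)^2 \;\lesssim_{A,d}\; R^{d-2}(t-s)^{-d}\exp\bigl\{-R^2/(c(t-s))\bigr\}
\end{equation*}
for some fixed $c>0$. The time integral is then split into two regimes. If $R^2\leq t$, I split at $s=t-R^2$: on $[0, t-R^2]$ I drop the restriction $|k-l|>R$ and use the bound $\sum_k F_k(s)^2\lesssim (t-s)^{-1-d/2}$ of Lemma~\ref{lem.intermezzo}, whose integration yields $\lesssim R^{-d}$; on $[t-R^2, t]$ the substitution $w=R^2/(cu)$ with $u=t-s$ reduces the Gaussian piece to an incomplete Gamma integral and again produces $\lesssim R^{-d}$. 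If $R^2>t$, the whole interval lies in the Gaussian regime, and the same substitution gives a bound of the form $R^{d-4}t^{2-d}\exp\{-R^2/(ct)\}$. Writing $\rho:=R/\sqrt{t}\geq 1$ and using $t\geq R_1^2/\rho^2$ bounds the algebraic prefactor by a polynomial in $\rho$ divided by a power of $R_1$; choosing $R_1$ large enough depending on $d$ and $c$ absorbs it into a slightly larger exponent to produce the claimed $\exp\{-R^2/(5t)\}$. I expect this polynomial-versus-exponential trade-off to be the main, though purely computational, difficulty, and it is what fixes the value of $R_1$.

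For the uniform bound \eqref{lkjnefjjn}, the observation that $V^{l,R}_t$ and $V_t-V^{l,R}_t$ are measurable with respect to disjoint families of Brownian motions and therefore independent gives $\E[|DV^{l,R}_t(x)|^2]\leq \E[|DV_t(x)|^2]$, which is $\lesssim 1$ uniformly in $t\geq 0$ by Lemma~\ref{lem.boundDV}. The estimates \eqref{liqendsfdxgcBIS} and \eqref{lkjnefjjnBIS} for $d\geq 3$ follow the same strategy with $G_k(s):=\bigl|\int p(x-y,t-s)A(y-k)\,dy\bigr|$ in place of $F_k$: the pointwise bound weakens to $G_k(s)^2\lesssim (t-s)^{-d}\exp\{-|k-l|^2/(16(t-s))\}$, and the analogous case analysis gives $\lesssim R^{2-d}$ for $R^2\leq t$ (the hypothesis $d\geq 3$ being precisely what makes $\int_{R^2}^t u^{-d/2}\,du$ bounded by $R^{2-d}$) and $\lesssim \exp\{-R^2/(9t)\}$ for $R^2>t$, while \eqref{lkjnefjjnBIS} follows once more by independence combined with \eqref{ineq.timecontBIS}.
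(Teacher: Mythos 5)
Your overall strategy matches the paper's (It\^o isometry, kernel decay, lattice-to-Gaussian comparison, two-regime time integration), and several pieces are fine or even cleaner than the paper's: the $R^{-d}$ branch works, the independence argument $\E[|DV^{l,R}_t|^2]\leq\E[|DV_t|^2]$ is slicker than the paper's triangle-inequality route, and the $d\geq 3$ adaptation is reasonable. But there is a genuine gap in the exponential branch of \eqref{liqendsfdxgc}, and the ``choose $R_1$ large'' step you describe cannot close it.

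The problem is the size of $c$ in your intermediate bound
\[
\sum_{|k-l|>R}F_k(s)^2\lesssim_{A,d} R^{d-2}(t-s)^{-d}\exp\{-R^2/(c(t-s))\}.
\]
Track the constants: the pointwise estimate $|Dp(z,r)|^2\lesssim r^{-d-1}e^{-|z|^2/(4r)}$ already drops the Gaussian rate from $1/2$ (what $p^2$ actually gives) to $1/4$, and the crude geometric input $|x-y|\geq|k-l|/2$ then replaces $|x-y|^2$ by $|k-l|^2/4$, quartering that to $1/16$; the lattice-to-Gaussian comparison can only lose further. So your $c$ is necessarily $\geq 16$, and in particular $c>5$. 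After the time integration and setting $\rho=R/\sqrt t$, you are trying to dominate $e^{-\rho^2/5}$ by something of the form $R^{-d}\rho^{2d-4}e^{-\rho^2/c}$. Since $c>5$, the exponential $e^{-\rho^2/c}$ decays \emph{more slowly} than the target, so
\[
\frac{R^{-d}\rho^{2d-4}e^{-\rho^2/c}}{e^{-\rho^2/5}}
= R^{-d}\,\rho^{2d-4}\exp\bigl\{\rho^2(\tfrac15-\tfrac1c)\bigr\}\longrightarrow\infty
\quad\text{as }\rho\to\infty,
\]
for every fixed $R\geq R_1$. No choice of $R_1$ repairs this: a constant prefactor cannot convert an exponential rate $1/c$ into a strictly faster one $1/5$, and absorbing a growing polynomial into the exponent moves $c$ \emph{up}, not down. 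The ``polynomial-versus-exponential trade-off'' you flag as merely computational is in fact where the argument breaks.

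The fix is to avoid giving away factors of $2$ before the exponential is formed. The paper's route keeps the full distance: after Cauchy--Schwarz it has $\sum_{|k-l|>R}F_k(s)^2\lesssim\int_{|z|>R'}|Dp(z,t-s)|^2\,dz$ with $R'=R-R_0-\sqrt d/2$, passes to polar coordinates while the kernel still carries $e^{-|z|^2/(2(t-s))}$, and only then absorbs the polynomial $\rho^{d+1}$ at the cost of an arbitrarily small loss. Equivalently in your setup, replace $|x-y|\geq|k-l|/2$ by $|x-y|\geq(1-\delta)|k-l|$ (valid once $R_1\geq(R_0+\sqrt d/2)/\delta$) and sharpen the kernel bound to $|Dp(z,r)|^2\lesssim_\varepsilon r^{-d-1}e^{-(1/2-\varepsilon)|z|^2/r}$. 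Then the rate that emerges after the time integral is $(1/2-\varepsilon)(1-\delta)^2(1-\delta')^2$, which can be pushed strictly above $1/5$; the incomplete-Gamma tail $\int_a^\infty\tau^{d/2-1}e^{-\beta\tau}\,d\tau\lesssim_d a^{d/2-1}e^{-\beta a}$ with $\beta>1/5$ then dominates the polynomial, and $e^{-R^2/(5t)}$ follows. The slack between $1/5$ and the true rate $1/2$ is precisely what the proof must exploit, and the factor-of-four giveaway in your geometric step spends more than all of it.
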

\smallskip

For later use, we note that $V^{R,l}_t(x)$ and $V^{R, l'}_{t'}(x')$ are independent, for any $t,t'$ and $x,x'$,  as soon as $|l-l'|> 2R$.
For this reason, we consider  Lemma \ref{lem.VVlR} as a decorrelation property of $V$. 

\begin{proof} Using  the representation formulae of  $DV_t$ and $DV^{l,R}_t$, we find 
$$
D(V_t-V^{l,R})(x)= \sum_{|k-l|>R}  \int_0^t \int_{\R^d} Dp(x-y,t-s) A(y+k)dy dB^k_s.
$$
Then \eqref{A}, It\^{o}'s isometry and Cauchy-Schwartz inequality yield
\begin{align*} 
& \E\left[ |D(V_t-V^{l,R})(x)|^2\right] \leq \sum_{|k-l|>R } \int_0^t \left| \int_{\R^d} Dp(x-y,t-s) A(y+k)dy\right|^2ds \\
& \lesssim_{A}   \sum_{|k-l|>R } \int_0^t \left| \int_{B_{R_0}(k)} (t-s)^{-1-d/2}|x-y| \exp\{-|x-y|^2/(2(t-s))\} dy\right|^2ds \\
& \lesssim_{A}    \sum_{|k-l|>R } \int_0^t  \int_{B_{R_0}(k)} (t-s)^{-2-d} |x-y|^2 \exp\{-|x-y|^2/(t-s)\} dyds.
\end{align*}
Therefore, for $x\in Q_1(l)$,  we get
\begin{align*} 
& \E\left[ |D(V_t-V^{l,R})(x)|^2\right] \lesssim_{A}    \int_0^t  \int_{B_{((R-R_0)-1)}^c(0) } (t-s)^{-2-d} |y|^2 \exp\{-|y|^2/(2(t-s))\} dyds\\ 
& \lesssim_{A}    \int_0^t  \int_{((R-R_0)-1)(t-s)^{-1/2}}^{+\infty}  (t-s)^{-1-d/2} \rho^{d+1} \exp\{-\rho^2/2\} d\rho ds.
\end{align*}
Choosing $R$ large enough, we can assume that $(R-R_0-1)\geq R/2$, 
and using that, for $\rho\geq 0$,  $\rho^{d+1} \exp\{-\rho^2/2\} \lesssim  \rho\exp\{-\rho^2/4\}$, integrating in space and an elementary change of variables we find   
\begin{align*} 
& \E\left[ |D(V_t-V^{l,R})(x)|^2\right]   \lesssim_{A} \int_0^t  (t-s)^{-1-d/2} \exp\{-R^2/(4(t-s))\} ds\\
&  \lesssim_{A} R^{-d} \int_{R^2/t}^{+\infty}  \tau^{-1+d/2} \exp\{-\tau/4\} d\tau,
\end{align*}
and, hence, \eqref{liqendsfdxgc}.
\smallskip

The proof of \eqref{lkjnefjjn} is then follows using  \eqref{liqendsfdxgc} combined and  Lemma~\ref{lem.boundDV}. \\

Next we assume that $d\geq 3$. Then 
$$
(V_t-V^{l,R})(x)= \sum_{|k-l|>R}  \int_0^t \int_{\R^d} p(x-y,t-s) A(y+k)dy dB^k_s,
$$
and, again, \eqref{A}, It\^{o}'s isometry  and an application of the Cauchy-Schwartz inequality imply that 
\begin{align*} 
& \E\left[ ((V_t-V^{l,R})(x))^2\right] \leq \sum_{|k-l|>R } \int_0^t \left| \int_{B_{R_0+2}(k)} p(x-y,t-s) A(y+k)dy\right|^2ds \\
&  \lesssim_{A} \sum_{|k-l|>R } \int_0^t \int_{B_{R_0+2}(k)} (t-s)^{-d} \exp\{-|x-y|^2/(t-s)\} dyds.
\end{align*}
Therefore, if  $x\in Q_1(l)$,  
\begin{align*} 
& \E\left[ ((V_t-V^{l,R})(x))^2\right] \lesssim_{A}  \int_0^t  \int_{B_{((R-R_0)-1)}^c(0) } (t-s)^{-d}  \exp\{-|y|^2/(t-s)\} dyds\\ 
&\lesssim_{A}  \int_0^t  \int_{(R-R_0-3)(t-s)^{-1/2}}^{+\infty}  (t-s)^{-d/2} \rho^{d-1} \exp\{-\rho^2\} d\rho ds.
\end{align*}
Assuming that $R$ is large so that $(R-R_0)-1\geq R/2$,  
using that,  $\rho\geq 0$,  $\rho^{d-1} \exp\{-\rho^2\} \lesssim \rho\exp\{-\rho^2/2\}$ and integrating in space, we get 
\begin{align*} 
& \E\left[ ((V_t-V^{l,R})(x))^2\right] \leq  C  \int_0^t  (t-s)^{-d/2} \exp\{-R^2/(8(t-s))\} ds\\
& \qquad \leq C R^{2-d} \int_{R^2/t}^{+\infty}  \tau^{-2+d/2} \exp\{-\tau/8\} d\tau.
\end{align*}
Then \eqref{liqendsfdxgcBIS} follows easily and the proof of \eqref{lkjnefjjnBIS} is then an application of \eqref{liqendsfdxgcBIS} combined with Lemma~\ref{lem.boundDV}.

\end{proof}

\subsection{The proof of Theorem~\ref{thm:main}}\label{proof1}
To prove the existence of a stationary solution of \eqref{l2}, we consider the sequence of solutions $V^n$ of \eqref{ln}.
\smallskip

The main step is to show that  $(DV^n)_{n\in \N}$ is a Cauchy sequence. 

%
\begin{lem}\label{lem.Cauchy} Assume \eqref{B} and \eqref{A}. Then, for any $r>0$ and any $T>0$, the sequence   $(DV^n)_{n\in \N}$ is  Cauchy  in $L^2( B_r\times [-T,T] \times \Omega)$, that is,  for any $n, m\in \N$ and $t\in \R$ with $m>n$ and $t\in [-n^2+1, m-1]$, 
\begin{align*}
& \E\left[|D(V_t^m-V_t^n)(x)|^2\right] \lesssim_{A,d} C(t+n^2)^{-(1\wedge (d/4))}.
\end{align*}
\end{lem}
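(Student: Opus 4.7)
The key observation is that on $t\geq -n^2$ the two solutions $V^n$ and $V^m$ are driven by \emph{exactly the same} Brownian forcing, so their difference $W_t:=V^m_t-V^n_t$ satisfies the \emph{deterministic} linear heat equation $\partial_t W=\Delta W$ on $\R^d \times (-n^2,\infty)$ with initial datum $W_{-n^2}=V^m_{-n^2}$ (since $V^n_{-n^2}=0$). The whole estimate is thereby reduced to quantifying how much the heat semigroup damps the random field $V^m_{-n^2}$ over the elapsed time $s:=t+n^2\geq 1$.

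\textbf{Explicit stochastic-integral form.} I would write $DW_t(x)=\int_{\R^d} p(x-y,s)\,DV^m_{-n^2}(y)\,dy$, insert the It\^o representation \eqref{rep.sol} (shifted to start at $-m^2$) for $V^m_{-n^2}$, and swap the $y$-integration with the It\^o integral by stochastic Fubini. The semigroup identity
\[
\int_{\R^d} p(x-y, s_1)\, Dp(y-z, s_2)\, dy \;=\; Dp(x-z,\, s_1+s_2)
\]
then collapses the double heat-kernel convolution to a single one, yielding
\[
DW_t(x)=\sum_{k\in\Z^d}\int_{-m^2}^{-n^2}\int_{\R^d}Dp(x-z,t-r)\,A(z-k)\,dz\,dB^k_r.
\]
It\^o's isometry together with the substitution $u=t-r$ produces
\[
\E\bigl[|DW_t(x)|^2\bigr]=\sum_{k\in\Z^d}\int_{t+n^2}^{t+m^2}\Bigl|\int_{\R^d}Dp(x-z,u)\,A(z-k)\,dz\Bigr|^2 du.
\]

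\textbf{Bound and conclusion.} The $u$-integrand is precisely $\sum_k F_k(\cdot)^2$ from Lemma \ref{lem.intermezzo} (with its time parameter equal to $u$), so estimate \eqref{(a)} controls it by $\lesssim_{A,d} u^{-(1+d/2)}$ on $u\geq 1$. Since the assumed range of $t$ forces $s=t+n^2\geq 1$, integration gives
\[
\E\bigl[|DW_t(x)|^2\bigr]\lesssim_{A,d}\int_{t+n^2}^{\infty}u^{-(1+d/2)}\,du\lesssim_{A,d}(t+n^2)^{-d/2},
\]
which implies the stated bound, because $d/2\geq 1\wedge d/4$ for every $d\geq 1$. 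Cauchyness of $(DV^n)$ in $L^2(B_r\times[-T,T]\times\Omega)$ then follows by integrating this pointwise-in-$(x,t)$ estimate, using spatial stationarity on $B_r$ and noting that for $n$ with $n^2 > 2T$ one has $t+n^2\geq n^2/2$ throughout $[-T,T]$.

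\textbf{Main obstacle.} The only nontrivial technical point is justifying the stochastic Fubini swap; absolute integrability of the joint integrand in $(y,z,r)$ is ensured by the compact support of $A$ together with the same Gaussian bounds that underpin Lemma \ref{lem.intermezzo}. After that, the argument is a direct computation based on the \emph{exact} cancellation of the noise, which is why a cleaner rate than the stated $(t+n^2)^{-(1\wedge d/4)}$ emerges and no decorrelation argument of the type used in Lemma \ref{lem.VVlR} is needed here.
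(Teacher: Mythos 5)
Your argument is correct, and it takes a genuinely different (and in fact cleaner) route than the paper's.

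Both you and the paper begin from the observation that $W:=V^m-V^n$ solves the deterministic heat equation on $\R^d\times(-n^2,\infty)$ with initial datum $V^m_{-n^2}$, so $DW_t(x)=\int p(x-y,t+n^2)\,DV^m_{-n^2}(y)\,dy$. From there the two proofs diverge. The paper treats $DV^m_{-n^2}$ as a random field with an unspecified covariance kernel, expands the square as a double convolution in $(y,y')$, and then invokes the decorrelation machinery of Lemma~\ref{lem.VVlR}: it replaces $DV^m_{-n^2}$ on $Q_1(k)$ by a localized version $DV^{m,k,R}_{-n^2}$ with $R\sim(t+n^2)^{1/4}$, splits the sum into ``distant'' pairs $|k-k'|\geq 2R+2$ (independent by construction, handled via the mean-zero property \eqref{m111}) and ``near'' pairs (handled by brute force), and balances the resulting error terms to obtain the rate $(t+n^2)^{-(1\wedge d/4)}$. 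You instead push the It\^{o} representation all the way through: since the same noise drives both $V^m$ and $V^n$ on $(-n^2,t]$, it cancels exactly, and $D(V^m_t-V^n_t)$ is a single stochastic integral against $dB^k_r$ over $r\in[-m^2,-n^2]$; It\^{o}'s isometry and the already-proved bound \eqref{(a)} from Lemma~\ref{lem.intermezzo} then give $\E[|D(V^m_t-V^n_t)|^2]\lesssim\int_{t+n^2}^{\infty}u^{-(1+d/2)}\,du\lesssim (t+n^2)^{-d/2}$, valid since $t+n^2\geq 1$ on the stated range. This is precisely the computation underlying Lemma~\ref{lem.boundDV}, just applied on the residual time window $[-m^2,-n^2]$. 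What your approach buys: it bypasses the localization/decorrelation apparatus entirely, requires no optimization over $R$, and yields the strictly better exponent $d/2\geq 1\wedge(d/4)$ for every $d\geq 1$. The semigroup identity and stochastic Fubini step you flag are both standard here given the compact support of $A$ and the Gaussian bounds already used in Lemma~\ref{lem.intermezzo}. The only thing worth making explicit is that you invoke \eqref{(a)} exactly in the regime $t-s\geq 1$ where its proof is tightest, which is guaranteed by the hypothesis $t\geq -n^2+1$; for $t-s<1$ the displayed chain of inequalities in the paper's proof of \eqref{(a)} doesn't absorb the ``$+1$'' term, though the bound remains true there by \eqref{(b)}--\eqref{(c)}. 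You do not need that borderline case.
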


\begin{proof} Fix $n<m$ and $t\in [ -n^2,m-1]$. Since $V^m-V^n$ solves the heat equation on $[-n^2, t]$ with initial condition $V^m_{-n^2}$,  we have 
\begin{align*}
V_t^m(x)-V_t^n(x) = \int_{\R^d} p(x-y,t+n^2) V^m_{-n^2}(y)dy .
\end{align*}
Hence, 
\begin{align*}
& \E\left[|D(V_t^m-V_t^n)(x)|^2\right] = \int_{\R^{2d}} p(x-y,t+n^2)p(x-y',t+n^2) \E\left[DV^m_{-n^2}(y)\cdot DV^m_{-n^2}(y')\right]dydy' \\
& \qquad =  \sum_{k,k'\in \Z^d} \int_{Q_1(k)\times Q_1(k')} p(x-y,t+n^2)p(x-y',t+n^2) \E\left[DV^m_{-n^2}(y)\cdot DV^m_{-n^2}(y')\right]dydy'.
\end{align*}
Fix $R= \lfloor(t+1+n^2)^{1/4}\rfloor$,  
and consider, for  $l\in \Z^d$, the solution $V^{m,l,R}$ of 
$$
\left\{\begin{array}{l}
\ds dV^{m,l,R}_t= \Delta V^{m,l,R}_tdt+ \sum_{k\in \Z^d, \ |k-l|\leq R} A(x-k) dB^k_t \qquad {\rm in}\; \R^d\times (-m^2,+\infty)\\ 
\ds V^{m,l,R}_{-m^2}\equiv 0. 
\end{array}\right.
$$
For any $y\in Q_1(l)$, Lemma~\ref{lem.VVlR} gives  
\be\label{th1}
\E\left[|DV^m_s(y)-DV^{m,l,R}_s(y)|^2\right] \lesssim_{A,d} \left\{\begin{array}{ll}
  R^{-d} \ \ {\rm if } \ \ R^2/(s+m^2)\leq 1,\\[1.5mm] 
 \exp\{-R^2/(5(s+m^2))\} \ \  {\rm otherwise. } \end{array}\right.
\ee
We replace $DV^m_{-n^2}(y)$ by $DV^{m,k,R}_{-n^2}(y)$ in $Q_1(k)$.  In order to apply \eqref{th1}, we note that the  assumption on $n$, $m$ and $t$, the choice of $R$, and the facts that $m-n\geq 1$ and that $t+1\leq m$ imply that $m^2-n^2\geq m+n \geq (t+1+n^2)^{1/2}\geq R^2$. 
\smallskip

Hence, $R^2/(m^2-n^2)\leq1$ and, in view of \eqref{th1}, we have  
\be
 \E\left[|D(V_t^m-V_t^n)(x)|^2\right] \lesssim_{A,d} \Big( \sum_{k,k'\in \Z^d} A_{k,k'} + B_{k,k'}\Big), 
 \ee
 where 
%
%
$$
A_{k,k'}= \int_{Q_1(k)\times Q_1(k')} p(x-y,t+n^2)p(x-y',t+n^2) \E\left[DV^{m,k,R}_{-n^2}(y) \cdot DV^{m,k',R}_{-n^2}(y')\right]dydy'
$$
and 
\begin{align*}
B_{k,k'} = & \int_{Q_1(k)\times Q_1(k')} p(x-y,t+n^2)p(x-y',t+n^2) \\
 & \qquad R^{-d/2} \Big(\E^{1/2}\left[|DV^{m,k,R}_{-n^2}(y')|^2\right]+\E^{1/2}\left[|DV^{m}_{-n^2}(y)|^2\right]\Big)\ dydy'.
\end{align*} 
Using  \eqref{ineq.timecont} and \eqref{lkjnefjjn} we find  
\begin{align*}
\sum_{k,k' \in \Z^d} B_{k,k'}& \lesssim_{A,d}  R^{-d/2}  \sum_{k,k'\in \Z^d} \int_{Q_1(k)\times Q_1(k')} p(x-y,t+n^2)p(x-y',t+n^2)dydy' \underset{A}\sim R^{-d/2}.
\end{align*}
To estimate $A_{k,k'}$ note that, if  $|k-k'|\geq 2R+2$, then $DV^{m,k,R}$ and $DV^{m,k',R}$ are independent, and, hence,
\begin{align*}
A_{k,k'}&=  \int_{Q_1(k)\times Q_1(k')} p(x-y,t+n^2)p(x-y',t+n^2)
\E\left[DV^{m,k,R}_{-n^2}(y)\right] \  \E\left[ DV^{m,k',R}_{-n^2}(y')\right]dydy'.
\end{align*}

Recall that, since $V^m$ is stationary in space, we have 
\be\label{m111}\ds \E\int_{Q_1(k)} DV^{m}_{-n^2}(y)dy =\E  \int_{Q_1(k')} DV^{m}_{-n^2}(y)dy =0.\ee 

To make use of this property, we replace $A_{k,k'}$ by $A_{k,k'}'$ given by  
\begin{align*}
A_{k,k'}'&=  \int_{Q_1(k)\times Q_1(k')} p(x-y,t+n^2)p(x-y',t+n^2)
\E\left[DV^{m}_{-n^2}(y)\right] \ \E\left[ DV^{m}_{-n^2}(y')\right]dydy',
\end{align*}
and we note that, with an argument similar to the one above, we have 
$$
\sum_{|k-k'|\geq 2R+2} A_{k,k'} -\sum_{|k-k'|\geq 2R+2} A_{k,k'}' \lesssim_{A,d}R^{-d/2}. 
$$
Next we replace  $p(x-y,t+n^2)$ by $p(x-k,t+n^2)$ and $p(x-y,t+n^2)$ by $p(x-k',t+n^2)$ in $A_{k,k'}'$, noting that 
\begin{align*}
\max \{|p(x-y,t+n^2)-p(x-k,t+n^2)|, & \  |p(x-y',t+n^2)-p(x-k',t+n^2)|\}\\[1.5mm]
& \lesssim  (t+n^2)^{-1-d/2}\exp\{ -|x-k|^2/(4(t+n^2)\}.
\end{align*} 
%
Since  \eqref{m111} and  Lemma \ref{lem.boundDV} to control the remaining terms, we obtain 
\begin{align*}
A_{k,k'}'&\lesssim_{A,d}  C(t+n^2)^{-1-d}  \exp\{ -(|x-k|^2+|x-k'|^2)/(4(t+n^2)\}, 
\end{align*}
Summing the terms $A_{k,k'}'$ with $|k-k'|\geq 2R+2$, we find 
\begin{align*}
\sum_{|k-k'|\geq 2R+2} A_{k,k'}'
 \lesssim_{A,d} (t+n^2)^{-1-d}    \left(\sum_{k\in \Z^d} \exp\{ -|x-k|^2/(4(t+n^2))\}\right)^2\lesssim_{A,d} (t+n^2)^{-1}.\\
\end{align*}
On the other hand, if $|k-k'|\leq 2R+2$, then, \eqref{lkjnefjjn} yields 
\[
A_{k,k'} \lesssim_{A,d} \int_{Q_1(k)\times Q_1(k')} p(x-y,t+n^2)p(x-y',t+n^2),\\
\]
and, hence, 
\begin{align*}
& \sum_{|k-k'|\leq 2R+2} A_{k,k'} \lesssim_{A,d} \int_{|y-y'|\leq 2R+4} p(x-y,t+n^2)p(x-y',t+n^2)\\
& \lesssim_{A,d} (t+n^2)^{-d} \int_{|y-y'|\leq 2R+4} \exp\{- (|x-y|^2+ |x-y'|^2)/(2(t+n^2))\} \\
& \lesssim_{A,d} \frac{ R^d}{(t+n^2)^{d/2}}
\end{align*}
It follows that 
\begin{align*}
& \E\left[|D(V_t^m-V_t^n)(x)|^2\right]\lesssim_{A,d} \left(R^{-d/2} +(t+n^2)^{-1}+ \frac{ R^d}{(t+n^2)^{d/2}}\right),
\end{align*}
which completes the proof since $R= [(t+1+n^2)^{1/4}]$. 

\end{proof}

We have now all the ingredients needed to prove the main result.
\begin{proof}[Proof of Theorem \ref{thm:main}] In view of Lemma \ref{lem.Cauchy},  the sequence $(DV^n)_{n\in \N}$ converges along subsequences in $L^2(\Omega,L^2_{loc}(\R^d\times \R))$ to some $Z$, which is stationary in space, and solves 
\begin{equation}\label{eq:Zequation}
d Z_t(x)= \Delta Z_t(x)dt + \sum_{k\in \Z^d} DA(x-k) dB^k_t \ \ \text{in} \ \ \R^d\times \R,
\end{equation}
and, thus, is continuous in time and smooth in space.
\smallskip

Moreover, in view of the bound on $DV^n$ in Lemma~\ref{lem.boundDV}, for any $x\in\R^d$, we have 
%
$$
\sup_{t\in \R} \E\left[ |Z_t(x)|^2\right]+\sup_{t\in \R} \E\left[ |DZ_t(x)|^2\right]\lesssim_{A,d} 1.
$$

Fix  $t_0\in \R$,  let $V_{t_0}$ be the smooth antiderivative of $Z_{t_0}$ with, for definiteness,  $V_{t_0}(0)=0$, which exists since 
$Z$ is the limit of gradients, and  $V$  the solution of \eqref{eq:main} in $\R^d \times [t_0,+\infty)$ with initial condition $V_{t_0}$. It is immediate that $DV=Z$. 
\smallskip

Next we prove that $Z$ is the unique process  satisfying \eqref{eq:Zequation} which is stationary in space and satisfies the bounds $\sup_t \E[|Z_t|^2]<+\infty$.  
\smallskip

Let $Z'$ be another such process. Then, for any $i\in \{1, \dots, d\}$, $u(x,t)=(Z-Z')_i$ is an entire solution of the heat equation. It follows from a  classical estimate on the heat equation (see, for example,  \cite{EvBook}) that there exists $C>0$ such that, for any $r\in \R$, 
$$
\max_{(y,s)\in Q_{r/2}\times [t-r^2/4,t]} |Du(y,s)| \leq \frac{C}{r^{d+3}} \int_{t-r^2}^t\int_{Q_r(x)} |u(y,s)|dyds,
$$
and, hence, 
\begin{align*}
\max_{y\in Q_1(0)} |Du(y,t)|^2 & \leq \frac{C}{r^{d+4}} \int_{t-r^2}^t\int_{Q_r(x)} |u(y,s)|^2dyds .
\end{align*}
Taking expectation and  using  the stationarity of $u$ and the $L^2$ bound, we find 
\begin{align*}
\E\left[ \max_{y\in Q_1(0)} |Du(y,t)|^2\right]  & \leq  \frac{C}{r^2}. 
\end{align*}
This proves that $Z_i(\cdot,t)\equiv Z'_i(\cdot, t)$ for a fixed $t$, and, since $\E[\int_{Q_1}Z(x,t)dx]=\\
\E[\int_{Q_1}Z'(x,t)dx]=0$, it follows  that $Z\equiv Z'$.  
\smallskip

Finally,  we prove that $Z$ is stationary in time. For this, we note that the map $t\to Z_t$ is measurable with respect to the $\sigma-$algebra generated by $(B^i_{s\wedge t})_{s\leq t}$ because this is the case for the maps $t\to DV^n_t$. Therefore, there exists a measurable   ${\mathcal Z}$ such that $Z_t(x)={\mathcal Z}(t,x, (B^i_{\cdot \wedge t})_{i\in \Z^d})$.
\smallskip

Next we note that, for any $s\in \R$, $Z_{\cdot+s}(\cdot)$ solves the same equation as $Z_\cdot$ with Brownian motions  shifted  in time by $s$. Hence, by the uniqueness of the solution,  $Z_{t+s}(\cdot)= {\mathcal Z}(t,x, (B^i_{(\cdot +s)\wedge t})_{i\in \Z^d})$, which has the same law as $ {\mathcal Z}(t,x, (B^i_{\cdot \wedge t})_{i\in \Z^d})$. It follows  that the law of $Z_\cdot$ is the same as the law of $Z_{\cdot+s}$, thus, $Z$ is the stationary in time.  
\smallskip

The attractor property of $Z$, that is,   \eqref{attractor},  is a straightforward consequence of Lemma \ref{lem.Cauchy}. Indeed, choose $n=0$, $t>0$ and $m$ larger than $t+1$ in the lemma. Then 
$$
 \E\left[|D(V_t^m-V_t)(x)|^2\right] \lesssim_{A,d} t^{-(1\wedge (d/4))}.
$$
Letting $m\to+\infty$, the construction of $Z$, gives
$$
 \E\left[|Z_t(x)-DV_t(x)|^2\right] \lesssim_{A,d}t^{-(1\wedge (d/4))}.
$$
Integrating over $Q_1$ yields \eqref{attractor}. 

\end{proof}


\section{Random homogenization of uniformly elliptic nonlinear parabolic equation in divergence form}\label{sec.homogen}

In this section we investigate the random homogenization of 
\be\label{pde10}
\partial_t u^\ep -{\rm div}( a(Du^\ep,\frac{x}{\ep},\frac{t}{\ep^2}, \omega)) =f \ \ \text{in} \ \ \R^d\times (0,T) \quad 
u^\ep(\cdot,0)=u_0.
\ee
We start with the description of the environment in subsection~\ref{subsec.envi} and state the assumptions on the vector field in subsection~\ref{subsec.ass}.  Subsection~\ref{subsec.corrector} is about the existence of the corrector and introduces the effective vector field. The homogenization result is developed in subsection~\ref{subsec.homogen}.

\subsection{Description of the environment}\label{subsec.envi}
We fix an ergodic  environment probability, that is, assume that 
\be\label{omega}
\begin{cases}
\text{$(\Omega,\mathcal F, \P)$ is a probability  space
endowed with an ergodic semigroup }\\
\tau: \Z^{d}\times\R\times \Omega\to \Omega \ 
\text{ of measure preserving maps,}
\end{cases}
\ee
and  we denote  by ${\bf L^2}$ the set of stationary maps $u=u(x,t,\omega)$ meaning 
\be\label{m1}
u(x+k,t+s,\omega)= u(x,t, \tau_{(k,s)}\omega) \ \ \text{for all} \ \  (k,s,\omega)\in \Z^d\times \R\times \Omega,
\ee
and such that 
\be\label{m2}
\|u\|_{{\bf L^2}}= \E\left[ \int_{\tilde Q_1}u^2\right]<+\infty.
\ee
Note that, if $u \in {\bf L^2} $, the stationarity in time implies that the quantity $$\E\left[\int_{t_1}^{ t_2}\int_O u(x,s)dxds\right],$$ where $O$ is a bounded measurable subset of $\R^d$, is affine in $t_2-t_1$, and, therefore, the limit
$$
\E\left[\int_O u(x,t)dx\right] = \lim_{h\to 0^+}\E\left[ \frac{1}{2h}\int_O \int_{t-h}^{t+h} u(x,s)dx ds\right] 
$$
exists for any $t\in \R$ and is independent of $t$. 
\smallskip

Let ${\mathcal C}$ be the subset of ${\bf L^2}$  of maps with smooth and square integrable space and time  derivatives of all order belonging  to ${\bf L^2}$. A simple argument using mollification in $\R^{d+1}$ yields that ${\mathcal C}$ is dense in ${\bf L^2}$ with respect to the norm in \eqref{m2}.
\smallskip

We denote by ${\bf H^1}$ the closure of ${\mathcal C}$ with respect to  the norm 
$$
\|u\|_{{\bf H^1}}=(\|u\|_{{\bf L^2}}^2+ \|\partial_t u\|_{{\bf L^2}}^2+ \|Du\|_{{\bf L^2}}^2)^{1/2},
$$
while  ${\bf H^1_x}$ the closure of ${\mathcal C}$ with respect to  the norm 
$$
\|u\|_{{\bf H^1_x}}=(\|u\|_{{\bf L^2}}^2+\|Du\|_{{\bf L^2}}^2)^{1/2}
$$
and ${\bf H^{-1}_x}$ is its dual space. 
\smallskip

Moreover,  ${\bf L^2_{pot}}$ is  the closure with respect to  the  ${\bf L^2}$-norm of  $\{Du: \  u\in {\mathcal C}\}$ in $({\bf L^2}(\Omega))^d$.
\smallskip

For later use, we also note that,  in view of the stationarity, for all  $u, v\in {\bf H^1}$ and $i=1, \ldots, d$,
$$
\E\left[ \int_{\tilde Q_1} u\partial_{x_i} v\right] = - \E\left[ \int_{\tilde Q_1} v \partial_{x_i}u\right],
$$
and 
$$
\E\left[ \int_{\tilde Q_1} u\partial_t v\right] = - \E\left[ \int_{\tilde Q_1} v \partial_tu\right].
$$

Finally, given a nonnegative weight $\rho$, we write ${\bf L^2_{\rho}}$, ${\bf H^1_\rho}$ and ${\bf H^{-1}_\rho}$  for the  spaces in which the  norm is evaluated against $\rho$. 
\smallskip

Finally, we note that, whenever an equation is said to be solved in the sense of distributions, then the pairing is the standard and  not the weighted one. 


\subsection{The assumptions on the vector field}\label{subsec.ass}
We assume that the  vector field $a:\R^d\times\R^d\times \R \times \Omega \to \R^d$ is  
\be\label{T}
\begin{cases}
\text{space-time stationary,  and, }\\[1mm]  
\text{strongly monotone and Lipschitz continuous   uniformly in $x, t$ and $\omega$.}
\end{cases}
\ee
Moreover, it is assumed that 
\be\label{a2}
\text{ $|a(0,\cdot, \cdot,\cdot)|: \R^d \times \R \times \Omega \to \R \in {\bf L^2}$, }
\ee
and, hence, 
\begin{equation}\label{Hypa3}
\E\left[ \int_{\tilde Q_1} |a(0,x,t)|^2dxdt\right]< +\infty.
\end{equation}

%

\subsection{The existence of a corrector and the effective  nonlinearity}\label{subsec.corrector}
We prove here the existence, for each $p\in \R^d$,  of a corrector, that is a map $\chi^p:\R^d\times\R\times \Omega \to \R$ with $\partial_t \chi^p$ and $D\chi^p$ stationary and of mean $0$ and such that
\[\partial_t \chi^p -{\rm div}( a(p+D\chi^p,x,t,\omega))= 0 \ \ \text{in} \ \ \R^d\times \R,\]
and use it to define the effective vector field $\ol a:\R^d\to \R^d.$
\smallskip

The result is stated next.

\begin{thm}\label{thm.corrector} Assume \eqref{omega}, \eqref{T}
and \eqref{a2}. For any $p\in \R^d$, there exists a unique map $\chi^p:\R^{d+1}\times \Omega\to \R$ such that 
$$
\int_{\tilde Q_1}\chi^p(x,t,\omega)dxdt=0 \ \ \text{$\P-$a.s.},  \ \ D\chi^p\in {\bf L^2_{pot}}, 
 \ \  \partial_t\chi^p \in {\bf H^{-1}_x},
$$
and 
\be\label{eq.thetaw}
\partial_t \chi^p -{\rm div}( a(p+D\chi^p,x,t,\omega))= 0 \ \ {\rm in} \ \ {\bf H^{-1}_x}.
\ee
\vskip.075in
Moreover, as $\ep\to 0$ and  $\P-$a.s. and in expectation, 
$$
\chi^\ep(x,t;p,\omega)=\ep  \chi^p(\frac{x}{\ep},\frac{t}{\ep^2},\omega) \to 0 \ \  \text{ in} \ \  L^2_{\text{loc}}(\R^{d+1}).
$$
In addition, the vector field  $\ol a:\R^d \to \R^d$ defined by 
\begin{equation}\label{def.barabara}
\overline a(p)= \E\left[ \int_{\tilde Q_1} a( p+D\chi^p,y,\tau,\omega)dy d\tau\right]
\end{equation}
is monotone and Lipschitz continuous.

\end{thm}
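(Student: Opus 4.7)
The plan is to construct $\chi^p$ by $\lambda$-regularization in the stationary Hilbert framework and then pass $\lambda\downarrow 0$ via Minty's monotonicity trick. For $\lambda>0$, first seek $\chi^p_\lambda \in {\bf H^1_x}$ with $\partial_t\chi^p_\lambda\in{\bf H^{-1}_x}$ solving
\[
\lambda\chi^p_\lambda+\partial_t\chi^p_\lambda-\mathrm{div}\bigl(a(p+D\chi^p_\lambda,x,t,\omega)\bigr)=0 \quad\text{in }{\bf H^{-1}_x}.
\]
The operator $L_\lambda u=\lambda u+\partial_t u-\mathrm{div}(a(p+Du,\cdot))$ is monotone, coercive, and hemicontinuous on the natural energy space $\{u\in{\bf H^1_x}:\partial_t u\in{\bf H^{-1}_x}\}$: the $\partial_t$ piece is antisymmetric on stationary fields (by the integration-by-parts identities recorded in subsection~\ref{subsec.envi}), the $-\mathrm{div}(a)$ piece is monotone via \eqref{T}, and the $\lambda u$ term together with \eqref{T} and \eqref{a2} supplies coercivity. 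Existence of $\chi^p_\lambda$ then follows from Browder--Minty, after, if needed, a Galerkin approximation on the dense subclass $\mathcal{C}$.

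Testing the regularized equation against $\chi^p_\lambda$, antisymmetry kills the time contribution and \eqref{T}, \eqref{a2} yield the $\lambda$-uniform bound $\lambda\|\chi^p_\lambda\|_{{\bf L^2}}^2+\|D\chi^p_\lambda\|_{{\bf L^2}}^2 \lesssim 1+|p|^2$. Extract a subsequence so that $D\chi^p_\lambda \rightharpoonup F$ in $({\bf L^2})^d$ with $F\in{\bf L^2_{pot}}$, $\lambda\chi^p_\lambda\to 0$ in ${\bf L^2}$, and $a(p+D\chi^p_\lambda,\cdot)\rightharpoonup g$. The appendix provides a measurable $\chi^p$ with $D\chi^p=F$ and $\int_{\tilde Q_1}\chi^p=0$ $\mathbb P$-a.s. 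To identify $g=a(p+D\chi^p,\cdot)$, apply the Minty monotonicity trick: for every $w\in\mathcal{C}$,
\[
\mathbb{E}\Bigl[\int_{\tilde Q_1}\bigl(a(p+D\chi^p_\lambda)-a(p+Dw)\bigr)\cdot(D\chi^p_\lambda-Dw)\Bigr]\ge 0;
\]
the cross term $\mathbb{E}[\int a(p+D\chi^p_\lambda)\cdot D\chi^p_\lambda] = -\lambda\|\chi^p_\lambda\|_{{\bf L^2}}^2 \to 0$ by the regularized equation and antisymmetry, and the density of $Dw$ in $F+{\bf L^2_{pot}}$ then identifies $g$. Uniqueness follows from the same testing: two mean-zero solutions $\chi^p$, $\tilde\chi^p$ satisfy $\|D(\chi^p-\tilde\chi^p)\|_{{\bf L^2}}=0$, hence coincide.

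The sublinearity $\chi^\ep \to 0$ in $L^2_{\mathrm{loc}}$ rests on the standard stationary-corrector argument: since $D\chi^p$ and $\partial_t\chi^p$ are stationary with zero mean, a Poincar\'e-type inequality on expanding space-time cubes combined with Birkhoff's ergodic theorem yields sublinear growth of $\chi^p$, which on rescaling gives the stated convergence both $\mathbb P$-a.s. and in expectation. Monotonicity and Lipschitz continuity of $\overline a$ come from subtracting the corrector equations for $p$ and $q$ and testing with $\chi^p-\chi^q$: $\partial_t$ antisymmetry gives $\mathbb{E}\bigl[\int_{\tilde Q_1}(a(p+D\chi^p)-a(q+D\chi^q))\cdot D(\chi^p-\chi^q)\bigr]=0$, so
\[
(\overline a(p)-\overline a(q))\cdot(p-q)=\mathbb{E}\Bigl[\int_{\tilde Q_1}\bigl(a(p+D\chi^p)-a(q+D\chi^q)\bigr)\cdot\bigl((p+D\chi^p)-(q+D\chi^q)\bigr)\Bigr]\ge C_0^{-1}|p-q|^2,
\]
using $\mathbb{E}[\int F]=0$ for $F\in{\bf L^2_{pot}}$; Cauchy--Schwarz then delivers the Lipschitz bound.

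The main obstacle is the low time regularity: $\partial_t\chi^p_\lambda$ lies only in ${\bf H^{-1}_x}$, so Aubin--Lions-type strong compactness is unavailable and the nonlinear passage $\lambda\downarrow 0$ must go through monotonicity. Making Minty rigorous demands careful handling of the duality pairing and of the identity $\langle\partial_t u,u\rangle=0$ on stationary fields, both resting on the density of $\mathcal{C}$ and the integration-by-parts formulas in subsection~\ref{subsec.envi}. A secondary difficulty is that $\chi^p$ itself need not be stationary --- only its gradient and time derivative are --- which is precisely what forces a Poincar\'e-type input into the sublinearity step rather than a direct ergodic theorem on $\chi^p$.
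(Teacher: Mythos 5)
Your skeleton (regularize in $\lambda$, get $\lambda$-uniform bounds, pass to the limit by Minty, reconstruct $\chi^p$ from $(Du,\partial_t u)$, Poincar\'e plus ergodicity for sublinearity, test differences for monotonicity of $\ol a$) matches the paper's at the bird's-eye level, and the monotonicity/Lipschitz argument for $\ol a$ at the end is essentially the paper's Lemma~\ref{m10}. However, there is a genuine gap that your proposal does not recognize as an obstacle at all, and it is precisely the technical heart of the paper.

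You repeatedly invoke ``$\partial_t$ is antisymmetric on stationary fields,'' i.e.\ $\langle \partial_t u,u\rangle=0$, both to prove well-posedness and coercivity of the regularized operator and to conclude that the cross term $\E[\int a(p+D\chi^p_\lambda)\cdot D\chi^p_\lambda]\to 0$ and hence that $\E[\int \xi\cdot w]=0$ at the limit. But the integration-by-parts identities recorded in subsection~\ref{subsec.envi} are only stated (and only hold, by density of $\mathcal C$) for $u,v\in {\bf H^1}$, i.e.\ when $\partial_t u\in {\bf L^2}$. In your setting $\partial_t\chi^p_\lambda$ is merely in ${\bf H^{-1}_x}$, and the pairing $\langle\partial_t u,u\rangle_{{\bf H^{-1}_x},{\bf H^1_x}}=0$ is \emph{not} an automatic consequence of stationarity --- there is no finite time interval whose boundary terms one can cancel, and the natural ``chain rule'' $\langle\partial_t u,u\rangle=\tfrac{d}{dt}\tfrac12\|u\|^2$ has no stationary analogue without further argument. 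The statement $\E[\int_{\tilde Q_1}\xi\cdot w]=0$ whenever $\theta=\div\xi$ in ${\bf H^{-1}_x}$, $w\in{\bf L^2_{pot}}$, $\theta\in{\bf H^{-1}_x}$ and the compatibility condition holds is exactly Lemma~\ref{lem.Ewxi=0} of the paper, and its proof is far from a formal antisymmetry identity: it requires mollification, a contradiction argument with an expanding cone-shaped cutoff $\psi(x,R(t))$ with $R(t)=(T-c_0t)^{1/2}$, and crucially the sublinearity of functions with stationary gradient from Lemma~\ref{lem.appendix} in the Appendix. Without this lemma, the passage $\lambda\to0$ in Minty's trick does not close: to substitute $z=w+\theta z'$ and conclude, one needs to know the term $\theta^{-1}\E[\int\xi\cdot w]$ vanishes, which is precisely Lemma~\ref{lem.Ewxi=0}.

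Two further related issues. First, the paper's regularization is not $\lambda u+\partial_t u-\div(a)$ but $\lambda u-\lambda\partial_{tt}u+\partial_t u-\div(a)$: the second-order $-\lambda\partial_{tt}$ term is what makes the regularized equation uniformly elliptic in space-time, so that $\partial_t\chi^{\lambda,p}\in{\bf L^2}$ (hence $\chi^{\lambda,p}\in{\bf H^1}$) and the recorded integration-by-parts identity \emph{is} applicable at the $\lambda$ level. Dropping that term, as you do, means your energy identity for $\chi^p_\lambda$ already relies on the unproved pairing $\langle\partial_t\chi^p_\lambda,\chi^p_\lambda\rangle=0$ in ${\bf H^{-1}_x}\times{\bf H^1_x}$. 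Second, Browder--Minty for $\lambda u+\partial_t u-\div(a(\cdot))$ requires establishing that $\partial_t$ is a maximal monotone unbounded linear operator on an appropriate domain inside ${\bf H^1_x}$, which in the stationary ergodic framework (with no time boundary and no compactness) is not standard and is not addressed; the paper avoids this entirely by first solving the $-\lambda\partial_{tt}$-elliptic Dirichlet problem on bounded boxes $\tilde Q_L$ with exponential weights $\hat\rho_\theta$, then letting $L\to\infty$ (Lemmata~\ref{lem.2.2} and~\ref{lem.23}). So while you correctly identify ``low time regularity'' as the hazard, your plan does not actually contain the idea that resolves it.
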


The proof of Theorem~\ref{thm.corrector}  is long and technical. At first look, its structure appears to be similar to the ones of the analogous results for periodic and almost periodic media.  The standard approach  is  to consider the solution (approximate corrector) of a regularized version of the corrector equation with small second derivative in time to make the problem uniformly elliptic set  in a bounded domain and small discount factor to guarantee the solvability. The next step is to obtain uniformly apriori bounds for the space and time derivatives of the approximate corrector  and to pass to the weak limit, which yields an equation involving the weak limit of the time derivative and the divergence of the weak limit of the vector field.  Note that, due to the unboundedness of the domain it is necessary to use weighted space, a fact that introduces another layer of approximations and technicalities. 

\smallskip

The proof of Theorem~\ref{thm.corrector} is organized in a number of lemmata, which provide incremental information leading to the final argument. 

\smallskip

Throughout the proof, to justify repeated integration by parts and to deal with the unbounded domain, we use the exponential  exponential weight $\hat \rho_\theta$, which,  for $\theta>0$, is  given by 
$$
\hat \rho_\theta(x,t)=  \exp\{-\theta (1+|x|^2+t^2)^{1/2}\}. 
$$ 

The first lemma is about the existence of as well as some apriori bounds for the approximate corrector in a bounded domain. 

\begin{lem}\label{lem.2.2} Assume \eqref{omega}, \eqref{T}
and \eqref{a2}. For any $\omega\in \Omega$, $\lambda>0$ and $L>0$, let $u_L\in H^1_0(\tilde Q_L)$ be the solution of 
\be\label{m5}
\lambda u_L  -\lambda \partial_{tt}u_L+  \partial_t u_L-{\rm div}( a(Du_L+p, \omega)) =0 \ \ {\rm in } \ \ \tilde Q_L \quad u_L=0  \ \  {\rm in} \ \  \partial \tilde Q_L.
\ee
%
There exists $\theta_0>0$, which depends on $\lambda$ but not on $L$ or $\omega$, such that, for any $\theta\in (0,\theta_0]$ and $\P-$a.s., 
\begin{equation}\label{eq.bonbound}
\int_{\tilde Q_L} \left( \lambda u_L^2 + \lambda (\partial_t u_L)^2
+ |Du_L|^2\right)\hat \rho_\theta \lesssim_{p,\lambda, \theta} (1+\int_{\tilde Q_L} |a(0)|^2\hat \rho_\theta).
\end{equation}
\end{lem}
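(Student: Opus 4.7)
The proof splits naturally into two independent parts: first, a standard monotone-operator existence argument for $u_L$ in $H^1_0(\tilde Q_L)$; second, the weighted energy estimate obtained by testing the equation against $u_L\hat\rho_\theta$ and absorbing the weight-derivative contributions using that $\theta$ is small.

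\textbf{Existence.} The operator
$$T(u) := \lambda u - \lambda\partial_{tt}u + \partial_t u - \operatorname{div}(a(Du+p,\cdot,\cdot,\omega))$$
acts from $H^1_0(\tilde Q_L)$ into its dual. For $u,v\in H^1_0(\tilde Q_L)$, integrating by parts and using that $u-v$ vanishes on $\partial \tilde Q_L$ gives
$$\langle T(u)-T(v),u-v\rangle = \lambda\|u-v\|_{L^2}^2 + \lambda\|\partial_t(u-v)\|_{L^2}^2 + \int_{\tilde Q_L}(a(Du+p)-a(Dv+p))\cdot D(u-v),$$
since the $\partial_t u \cdot (u-v)$ cross term turns into $\tfrac12\int \partial_t((u-v)^2) = 0$. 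By \eqref{monotone} the last integral dominates $C_0^{-1}\|D(u-v)\|_{L^2}^2$, so $T$ is strongly monotone and coercive on $H^1_0(\tilde Q_L)$. Lipschitz continuity \eqref{Lip} together with \eqref{Hypa3} ensures hemicontinuity and that $T(0)\in H^{-1}(\tilde Q_L)$, so the Browder--Minty theorem yields a unique $u_L\in H^1_0(\tilde Q_L)$ with $T(u_L)=0$.

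\textbf{A priori bound.} The test function is $\varphi = u_L\hat\rho_\theta \in H^1_0(\tilde Q_L)$. Pairing with $T(u_L)=0$ and integrating by parts separately in $t$ and $x$ produces
\begin{align*}
&\lambda\!\!\int_{\tilde Q_L}\!\! u_L^2\hat\rho_\theta + \lambda\!\!\int_{\tilde Q_L}\!\!(\partial_tu_L)^2\hat\rho_\theta + \int_{\tilde Q_L}\!\!a(Du_L+p)\cdot Du_L\,\hat\rho_\theta \\
&\qquad = -\lambda\!\!\int_{\tilde Q_L}\!\!\partial_tu_L\cdot u_L\,\partial_t\hat\rho_\theta + \tfrac12\!\!\int_{\tilde Q_L}\!\!u_L^2\,\partial_t\hat\rho_\theta - \!\!\int_{\tilde Q_L}\!\!a(Du_L+p)\cdot u_L\,D\hat\rho_\theta.
\end{align*}
By \eqref{monotone} applied to $Du_L+p$ and $p$,
$$a(Du_L+p)\cdot Du_L \;\geq\; C_0^{-1}|Du_L|^2 + a(p)\cdot Du_L,$$
and by \eqref{Lip} and \eqref{a2} one has $|a(p)|+|a(Du_L+p)|\lesssim_{C_0} |a(0,x,t,\omega)| + |p| + |Du_L|$. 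Since $|\partial_t\hat\rho_\theta|+|D\hat\rho_\theta|\leq \theta\hat\rho_\theta$, the three right-hand-side terms are controlled by $\theta$ times the good quantities; applying Cauchy--Schwarz with a small parameter, and then choosing $\theta_0$ small depending only on $\lambda$, $C_0$ and $|p|$, each weight-derivative contribution is absorbed into $\tfrac12$ of the corresponding term on the left. The residual contributions from $a(p)\cdot Du_L$ and the $|a(0)|^2$-pieces are bounded by the right-hand side of \eqref{eq.bonbound}.

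\textbf{Main obstacle.} The technical heart is the calibration of $\theta_0$. The first-order term $\partial_t u_L$ introduces a factor $\lambda\partial_tu_L\cdot u_L\,\partial_t\hat\rho_\theta$ of size $\lambda\theta$, and to absorb it into $\lambda(\partial_t u_L)^2\hat\rho_\theta$ one needs $\theta^2\lesssim \lambda$; likewise, the spatial weight term must be absorbed into $C_0^{-1}|Du_L|^2\hat\rho_\theta$, which forces $\theta$ to be small in terms of $C_0$. These are compatible and give $\theta_0=\theta_0(\lambda,C_0,|p|)$ independent of $L$ and $\omega$, which is exactly what the statement requires.
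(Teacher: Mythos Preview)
Your proof is correct and follows essentially the same route as the paper: test the equation against $u_L\hat\rho_\theta$, use strong monotonicity to extract the gradient term, exploit $|D\hat\rho_\theta|+|\partial_t\hat\rho_\theta|\leq\theta\hat\rho_\theta$, and absorb the weight-derivative contributions for $\theta$ small. The paper's version differs only cosmetically, applying monotonicity between $Du_L+p$ and $0$ (so $C_0^{-1}|Du_L+p|^2$ appears on the left) rather than between $Du_L+p$ and $p$ as you do; and it omits the Browder--Minty existence argument, which you supply. One small correction: the term $\tfrac12\int u_L^2\,\partial_t\hat\rho_\theta$ is of size $\theta\int u_L^2\hat\rho_\theta$ and must be absorbed into $\lambda\int u_L^2\hat\rho_\theta$, which forces $\theta_0\lesssim\lambda$ rather than $\theta_0^2\lesssim\lambda$; also, $\theta_0$ can be taken independent of $|p|$ (only the constant in \eqref{eq.bonbound} depends on $p$), matching the statement.
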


Note that the integral $\int_{\tilde Q_L} |a(0)|^2\hat \rho_\theta$ in the right-hand side of \eqref{eq.bonbound} is random and that the implicit constant does not depend on either $\omega$ or $L$

\begin{proof} Using  $\hat \rho_\theta u_L $ as a test function in \eqref{m5}, the  monotonicity and  Lipschitz continuity of $a $  and  the fact that $|D\hat \rho_\theta|+|\partial_t \hat \rho_\theta|\lesssim \theta \hat \rho_\theta$, we find 
\begin{align*}
& \int_{\tilde Q_L} \left( \lambda u_L^2 + \lambda (\partial_t u_L)^2
+ C_0^{-1}|Du_L+p|^2\right)  \hat \rho_\theta\\ 
& \; \leq -\int_{\tilde Q_L} \Bigl( \lambda \partial_t u_L u_L \frac{\partial_t \hat \rho_\theta}{\hat \rho_\theta} -\frac{(u_L)^2\partial_t \hat \rho_\theta}{2\hat \rho_\theta} - a(Du_L+p)\cdot p + u_L a(Du_L+p)\cdot \frac{D\hat \rho_\theta}{\hat \rho_\theta}\Bigr)\hat \rho_\theta \\ 
& \lesssim  \int_{\tilde Q_L}( \lambda \theta |\partial_t u_L||u_L|  + \theta u_L^2 +(|a(0)|+ |Du_L+p|)(|p| + \theta |u_L| )  \hat \rho_\theta,
\end{align*}
and, hence,  the claim. 

\end{proof}

Next,  we use Lemma~\ref{eq.bonbound} to obtain the  existence and bounds for approximate solutions of the approximate regularized problem in all of $\R^{d+1}$. 

\begin{lem}\label{lem.23} Assume \eqref{omega}, \eqref{T},
and \eqref{a2}.  For any $p\in \R^d$, $\lambda>0$ and $\theta\in (0,\theta_0)$, there exists, $\P-$a.s. and in the sense of distributions, a unique stationary  solution   $\chi^{\lambda,p}\in {\bf H^1_{\hat \rho_\theta}}$  of 
\begin{equation}\label{eq.approxcorr}
\lambda \chi^{\lambda,p}  -\lambda \partial_{tt}\chi^{\lambda,p}+  \partial_t \chi^{\lambda,p}-{\rm div}( a(D\chi^{\lambda,p}+p, \omega)) =0\ \  {\rm in }\ \ \R^{d+1},
\end{equation}
which is  independent of $\theta\in (0,\theta_0)$,  belongs to ${\bf H^1}$ and, in addition, 
\begin{equation}\label{inequlambda.esti1}
\E\left[ \int_{\tilde Q_1} \lambda (\chi^{\lambda,p})^2 + \lambda (\partial_t \chi^{\lambda,p})^2 + |D\chi^{\lambda,p}|^2\right] \lesssim_p 1 
\end{equation}
and, for all  $\phi\in {\bf H^1},$
\begin{equation}\label{inequlambda.esti2}
\E\left[\int_{\tilde Q_1} \partial_t \chi^{\lambda,p} \phi \right]\lesssim_p \lambda^{1/2} \|\phi\|_{{\bf H^1}}+ \|D\phi\|_{{\bf L^2}},
\end{equation}
both estimates being independent of  $\lambda$. 
\end{lem}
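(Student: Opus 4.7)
The plan is to obtain $\chi^{\lambda,p}$ as the pathwise $L\to\infty$ limit of the approximations $u_L$ from Lemma \ref{lem.2.2} in the weighted space ${\bf H^1_{\hat\rho_\theta}}$, and then to promote this limit to a stationary element of ${\bf H^1}$ using uniqueness.

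First I would note that, because $|a(0,\cdot,\cdot,\cdot)|\in{\bf L^2}$ and $\int_{\R^{d+1}}\hat\rho_\theta<+\infty$, Fubini yields $\E\left[\int_{\R^{d+1}}|a(0)|^2\hat\rho_\theta\right]<+\infty$, so $\P$-a.s.\ the right-hand side of \eqref{eq.bonbound} stays bounded as $L\to+\infty$. After extending $u_L(\cdot,\omega)$ by zero outside $\tilde Q_L$, one extracts a weak subsequential limit $\chi^{\lambda,p}(\cdot,\omega)\in{\bf H^1_{\hat\rho_\theta}}$. The fluxes $a(Du_L+p,\omega)$ are bounded in ${\bf L^2_{\hat\rho_\theta}}$ by the Lipschitz bound in \eqref{T}, so they admit a weak limit $\xi$. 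A standard Minty-type argument---localizing by a smooth cutoff $\eta^2$ (on whose support $\hat\rho_\theta$ is comparable to $1$, so that Rellich-Kondrachov applies and yields strong $L^2$ convergence of $u_L$), testing \eqref{m5} against $\eta^2 u_L$ to compute $\int\eta^2 a(Du_L+p)\cdot Du_L$, and using weak lower semicontinuity of $\int\lambda(\partial_t u_L)^2\eta^2$---identifies $\xi=a(D\chi^{\lambda,p}+p,\omega)$, so that \eqref{eq.approxcorr} holds distributionally on $\R^{d+1}$.

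Uniqueness in ${\bf H^1_{\hat\rho_\theta}}$ (which also gives independence of $\theta$) comes by testing the difference equation for $v=\chi_1-\chi_2$ against $v\hat\rho_\theta$ exactly as in Lemma \ref{lem.2.2} and using $|D\hat\rho_\theta|+|\partial_t\hat\rho_\theta|\lesssim\theta\hat\rho_\theta$ together with the strong monotonicity of $a$ for $\theta\le\theta_0$ small. For stationarity, fix $(k,s)\in\Z^d\times\R$ and set $v_L(x,t):=u_L(x-k,t-s,\tau_{(k,s)}\omega)$; by stationarity of $a$, this is the unique solution of \eqref{m5} on $\tilde Q_L+(k,s)$ with coefficient $a(\cdot,\cdot,\cdot,\omega)$. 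Because $\hat\rho_\theta(\cdot-k,\cdot-s)$ is comparable to $\hat\rho_\theta$ on every fixed compact set, the weighted bounds of Lemma \ref{lem.2.2} transfer to uniform local bounds on $v_L$, and the same compactness/Minty argument, combined with uniqueness on $\R^{d+1}$, gives $v_L\rightharpoonup\chi^{\lambda,p}(\cdot,\cdot,\omega)$. On the other hand, applying the Step~1 construction directly to $u_L(\cdot,\tau_{(k,s)}\omega)$ yields $v_L\rightharpoonup\chi^{\lambda,p}(\cdot-k,\cdot-s,\tau_{(k,s)}\omega)$, and equating the two limits proves stationarity.

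Once stationarity is available, the estimates \eqref{inequlambda.esti1} and \eqref{inequlambda.esti2} follow directly from the stationary weak formulation. Testing \eqref{eq.approxcorr} against $\chi^{\lambda,p}$ in the stationary sense, the cross term $\E\left[\int_{\tilde Q_1}\partial_t\chi^{\lambda,p}\,\chi^{\lambda,p}\right]=\tfrac12\E\left[\int_{\tilde Q_1}\partial_t((\chi^{\lambda,p})^2)\right]$ vanishes by $t$-translation invariance of the expectation, and strong monotonicity together with $|a(p)|\lesssim|a(0)|+|p|$ and \eqref{Hypa3} gives \eqref{inequlambda.esti1}. Identity \eqref{inequlambda.esti2} is obtained by isolating $\E\left[\int_{\tilde Q_1}\partial_t\chi^{\lambda,p}\phi\right]$ in the stationary weak form and applying Cauchy-Schwarz: the two $\lambda$-terms contribute $\lambda^{1/2}\|\phi\|_{{\bf H^1}}$ thanks to \eqref{inequlambda.esti1}, while the flux term contributes $\|D\phi\|_{{\bf L^2}}$. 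The hard part will be the stationarity step, because neither the cubes $\tilde Q_L$ nor the weight $\hat\rho_\theta$ are translation invariant, so the uniqueness argument on $\R^{d+1}$ must be coupled with careful control of how the weighted bounds deteriorate under shifts by $(k,s)$.
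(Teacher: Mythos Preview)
Your proposal is correct and follows essentially the same route as the paper: pass to the limit in the $u_L$ of Lemma~\ref{lem.2.2} in the weighted space, identify the nonlinearity by a Minty argument, prove uniqueness in $H^1_{\hat\rho_\theta}$, deduce stationarity from uniqueness, and finally derive \eqref{inequlambda.esti1}--\eqref{inequlambda.esti2}. Two execution details differ and are worth noting.

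First, for the Minty step the paper uses the global weight $\hat\rho_\theta$ itself as the cutoff: one tests both the $u_L$--equation and the limit equation against $u_L\hat\rho_\theta$ and $\phi\hat\rho_\theta$, passes to the limit (using strong $L^2_{\hat\rho_\theta}$ convergence, obtained by interpolating local Rellich compactness with the uniform $H^1_{\hat\rho_{\theta_0}}$ bound for $\theta<\theta_0$), substitutes $\phi=u+h\psi$, and only at the very end lets $\theta\to 0$ to kill the $D\hat\rho_\theta/\hat\rho_\theta$ and $\partial_t\hat\rho_\theta/\hat\rho_\theta$ error terms. Your local $\eta^2$ version is equivalent.

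Second, the stationarity step is simpler than you fear, and the paper dispatches it in one line. You do not need to revisit the approximations on shifted cubes. The point is that for fixed $(k,s)$ the ratio $\hat\rho_\theta(x,t)/\hat\rho_\theta(x-k,t-s)$ is bounded above and below on all of $\R^{d+1}$, so $H^1_{\hat\rho_\theta}$ is invariant under translations by $(k,s)$. Hence $\chi^{\lambda,p}(\cdot-k,\cdot-s,\tau_{(k,s)}\omega)$ lies in $H^1_{\hat\rho_\theta}$ and, by stationarity of $a$, solves \eqref{eq.approxcorr} with the same coefficient $a(\cdot,\cdot,\cdot,\omega)$; uniqueness then gives stationarity directly. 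Similarly, for \eqref{inequlambda.esti1} the paper avoids a potential circularity (testing in the stationary sense before knowing $\chi^{\lambda,p}\in{\bf H^1}$) by first deriving the pathwise weighted energy inequality with $\hat\rho_\theta u$ as test function, and only then taking expectations and using stationarity to collapse $\E\int_{\R^{d+1}}(\cdots)\hat\rho_\theta$ to a multiple of $\E\int_{\tilde Q_1}(\cdots)$.
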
 

\begin{proof} Let $u_L$ be as in Lemma \ref{lem.2.2}. The stationarity of $a$ and \eqref{Hypa3} yield
$$
\E\left[ \int_{\R^{d+1}} |a(0)|^2\hat \rho_\theta\right] \lesssim_{\theta, d} \E\left[ \int_{\tilde Q_1} |a(0)|^2\right] < +\infty,  
$$
Let $\omega\in \Omega$ be such that 
$$
\int_{\R^{d+1}} |a(0,x,t,\omega)|^2\hat \rho_\theta(x,t)dxdt <\oo$$
for a countable  sequence of $\theta\to 0$ and, thus, for any $\theta\in (0,1]$. Cleary, the set of such $\omega$ has  probability $1$. 
\smallskip

Fix such $\omega$.  It follows from Lemma \ref{lem.2.2}  that the family $(u_L)_{L\in (0,\oo)}$ is bounded in ${\bf H^1_{ \rho_{\theta}}}$ for any $\theta\in (0,\theta_0]$. A diagonal argument then yields a subsequence, which, to keep the notation simple, is denoted as the family,  and some $u\in \bigcap_{\theta'\in (0,\theta_0]} { H^1_{\hat \rho_{\theta'}}}$,
such that, as $L\to \infty$,  $u_L\rightharpoonup u$  in ${ H^1_{\hat \rho_{\theta}}}$ for any $\theta \in (0,\theta_0]$.
\smallskip

In particular,  $u_L\to u$ in $L^2(\tilde Q_R)$ for any $R>0$ and, therefore, in  $L^2_{\hat \rho_\theta}$ for all $\theta\in (0,\theta_0)$,  since, for any $R>0$, 
\begin{align*}
\|u_L-u\|_{L^2_{\rho_\theta}(\R^{d+1})} & \leq \|u_L-u\|_{L^2_{\rho_\theta}(\tilde Q_R)} +( \sup_{\R^{d+1}\backslash \tilde Q_R}\frac{\rho_\theta}{\rho_{\theta_0}})\|u_L-u\|_{L^2_{\rho_{\theta_0}(\R^{d+1}\backslash \tilde Q_R)}}\\[2mm]
& \leq \|u_L-u\|_{L^2(\tilde Q_R)} +\Big( \sup_{\R^{d+1}\backslash \tilde Q_R}\frac{\rho_\theta}{\rho_{\theta_0}})(\|u_L\|_{L^2_{\rho_{\theta_0}}(\R^{d+1})}+
\|u\|_{L^2_{\rho_{\theta_0}}(\R^{d+1})}\Big).
\end{align*} 
Note that above the first term in the right-hand side  tends to $0$ as $L\to \infty$ and the second one tends to $0$, uniformly in $L$, as $R\to +\infty$. 
\smallskip

We can also assume that, as $L\to \infty$,  $a(Du_L+p, \omega) \rightharpoonup \xi\in \bigcap_{\theta'\in (0,\theta_0]} { L^2}_{\hat \rho_{\theta'}}$. 
\smallskip

It follows that, in the sense of distributions, 
\begin{equation}\label{lekrgndf}
\lambda u  -\lambda \partial_{tt}u+  \partial_t u-{\rm div}( \xi) =0 \ \ {\rm in } \ \ \R^{d+1},
\ee
and, for all  $\theta\in (0,\theta_0]$,
\begin{equation}\label{lekrgndfBIS}
\int_{\R^{d+1}} \left( \lambda u^2 + \lambda (\partial_t u)^2
+ |Du|^2\right)  \hat \rho_\theta\lesssim_{p, \lambda, \theta)}(1+ \int_{\R^{d+1}} |a(0)|^2\hat \rho_\theta).
\end{equation}

Next we  check that $u$ is a solution of \eqref{eq.approxcorr}. In what follows, we use that $u\in  \bigcap_{\theta'\in (0,\theta_0]} { H^1_{\hat \rho_{\theta'}}}.$
\smallskip

 
Let $\phi\in C^\infty_c(\R^{d+1})$. The strong monotonicity of $a$ gives, for $L$ large enough, 
\begin{align*}
\int_{\tilde Q_L}\Bigl( \lambda (u_L-\phi)^2+ \lambda (\partial_t u_L-\partial_t \phi)^2 + 
(a(Du_L+p)-a(D\phi+p))\cdot D(u_L-\phi) \Bigr) \hat \rho_\theta  \geq 0. 
\end{align*}
Moreover,  using  $u_L\hat \rho_\theta$ as a test function for the equation of $u_L$,  we find 
\begin{align*}
& \int_{\tilde Q_L} \Bigl( \lambda u_L^2 + \lambda (\partial_t u_L)^2 +  \lambda  \partial_t u_L u_L  \frac{\partial_t \hat \rho_\theta}{\hat \rho_\theta}+ \partial_t u_Lu_L 
+ a(Du_L+p)\cdot Du_L 
\\
& \qquad\qquad  
+ u_L  a(Du_L+p) \cdot \frac{D\hat \rho_\theta}{\hat \rho_\theta}  \Bigr)\hat \rho_\theta =0.
\end{align*}
Hence, 
\begin{align*}
&\int_{\tilde Q_L}\Bigl( -2 \lambda u_L\phi+ \lambda \phi^2 -2 \lambda \partial_t u_L\partial_t \phi+\lambda (\partial_t \phi)^2 - a(Du_L+p)\cdot D\phi \\
& -a(D\phi+p)\cdot D(u_L-\phi) 
  - \partial_t u_L u_L
- \lambda \partial_t u_L u_L\frac{\partial_t \hat \rho_\theta}{\hat \rho_\theta}\
- u_L a(Du_L+p) \cdot \frac{D\hat \rho_\theta}{\hat \rho_\theta} 
 \Bigr) \hat \rho_\theta  \geq 0. 
\end{align*}
 
Letting $L\to +\infty$  and recalling that, as $L\to \infty$,  $u_L\to u$, $\partial_t u_L \rightharpoonup \partial_t u$, $Du_L\rightharpoonup Du$ and $a(Du_L+p)\rightharpoonup \xi$ in ${ L^2_{\hat \rho_\theta}}$ and, hence, in $L^2_{loc}$,  we  obtain 
\begin{align*}
&\int_{\R^{d+1} }\Bigl( -2 \lambda u\phi+ \lambda \phi^2 -2 \lambda \partial_t u\partial_t \phi+\lambda (\partial_t \phi)^2 - \xi\cdot D\phi -a(D\phi+p)\cdot D(u-\phi) \\
& \qquad\qquad -\partial_t u u 
- \lambda \partial_t u u\frac{\partial_t \hat \rho_\theta}{\hat \rho_\theta}
- u \xi \cdot \frac{D\hat \rho_\theta}{\hat \rho_\theta} 
 \Bigr) \hat \rho_\theta  \geq 0. 
\end{align*}
On the other hand, integrating \eqref{lekrgndf} against $\phi\hat \rho_\theta$, we get  
\begin{align*}
\int_{\R^{d+1}} \Bigl( \lambda u  \phi +\lambda \partial_{t}u\partial_t \phi + \lambda \partial_{t}u \phi\frac{\partial_t\hat \rho_\theta}{\hat \rho_\theta}
+  \partial_t u \phi +\xi \cdot D\phi + \phi\xi\cdot \frac{D\hat \rho_\theta}{\hat \rho_\theta} \Bigr)\hat \rho_\theta  =0.
\end{align*}
Inserting the last equality into the inequality above gives
\begin{align*}
&\int_{\R^{d+1} }\Bigl( - \lambda \phi(u-\phi) - \lambda \partial_t \phi(\partial_t u-\partial_t \phi) -a(D\phi+p)\cdot D(u-\phi) \\
& \qquad\qquad -\partial_t u (u-\phi) 
- \lambda \partial_t u (u-\phi) \frac{\partial_t \hat \rho_\theta}{\hat \rho_\theta}
- (u-\phi) \xi \cdot \frac{D\hat \rho_\theta}{\hat \rho_\theta} 
 \Bigr) \hat \rho_\theta  \geq 0. 
\end{align*}
Using $\phi= u+h\psi$ for $h>0$ small and $\psi\in C^\infty_c(\R^{d+1})$, something that can be done using standard approximation arguments, yields, after dividing  by $h$ and letting $h\to 0$,  
\begin{align*}
&\int_{\R^{d+1} }\Bigl(  \lambda u\psi + \lambda \partial_t u \partial_t \psi + a(Du+p)\cdot D\psi 
+\partial_t u \psi 
+ \lambda \partial_t u \psi \frac{\partial_t \hat \rho_\theta}{\hat \rho_\theta}
+ \psi \xi \cdot \frac{D\hat \rho_\theta}{\hat \rho_\theta} 
 \Bigr) \hat \rho_\theta  \geq 0. 
\end{align*}
The facts that $\psi$ has a compact support, $u$ and its derivatives are locally integrable and, as $\theta\to0$,  the derivatives of $\hat \rho_\theta$ tend to $0$ locally uniformly,  gives, after letting  $\theta\to0$,
\begin{align*}
&\int_{\R^{d+1} } \lambda u\psi + \lambda \partial_t u \partial_t \psi + a(Du+p)\cdot D\psi 
+\partial_t u \psi  \geq 0. 
\end{align*}
Since $\psi\in C^\infty_c(\R^{d+1})$ is arbitrary, the last inequality implies  that $u$ is a solution of \eqref{eq.approxcorr} in the sense of distributions. 
\smallskip

Next we check that $u$  is unique among weak solutions of \eqref{eq.approxcorr} in ${ H^1_{\hat \rho_\theta}}$ for some $\theta>0$. 
\smallskip

Let $u_1, u_2$ be two solutions and set $\tilde u= u_1-u_2$. Using $\tilde u \hat \rho_\theta$ as a test function in the equation for $\tilde u$, we find 
\begin{align*}
& \int_{\R^{d+1}} (\lambda \tilde u^2 + \lambda (\partial_t \tilde u)^2  +(a(p+Du_1)-a(p+Du_2))\cdot D\tilde u ) \hat \rho_\theta  \\
& \qquad = - \int_{\R^{d+1}} \lambda \tilde u \partial_t \tilde u\partial_t  \hat \rho_\theta+ \tilde u (a(p+Du_1)-a(p+Du_2))\cdot D  \hat \rho_\theta \\ 
& \lesssim_{a} \theta \int_{\R^{d+1}} (\lambda |\tilde u| |\partial_t \tilde u|+ C_0 |D\tilde u | |\tilde u| ) \hat \rho_\theta. 
\end{align*}
Then a standard argument based on Cauchy-Schwartz inequality implies that, for $\theta$ small enough, $\tilde u\equiv0$.  
\smallskip

Since  \eqref{eq.approxcorr} has a  unique solution in ${ H^1_{\hat \rho_\theta}}$ for some $\theta>0$, the whole family  $u_L$ converges to $u$ as $L\to +\infty$.  It follows that 
$u$ is measurable in $\Omega$.  Moreover, the  stationarity of the equation and the  uniqueness of the solution imply that $u$ is also  stationary. 
\smallskip

To establish the bounds claimed, we test   the equation for $u$ against $\hat \rho_\theta u$.  Using the monotonicity  of $a$ and arguing as above we get 
\begin{align*}
& \int_{\R^{d+1}} \left( \lambda u^2 + \lambda (\partial_t u)^2
+ C_0^{-1}|Du+p|^2\right)  \hat \rho_\theta\\ 
&\lesssim_a \int_{\R^{d+1}}( \lambda \theta |\partial_t u||u|  + \theta u^2 +(|a(0)|+ |Du+p|)(|p| + \theta |u| )  \hat \rho_\theta,
\end{align*}
 It follows that, for $\theta$ small enough depending on $p$ but independent of $\omega$,  
\begin{align*}
& \int_{\R^{d+1}} (\lambda  u^2 + \lambda (\partial_t  u)^2  +C_0^{-1}|Du+p|^2 ) \hat \rho_\theta  \lesssim_p \int_{\R^{d+1}} |a(0)|^2 \hat \rho_\theta. 
\end{align*}
Taking expectations and using \eqref{a2} and the fact $u$ is stationary and \eqref{a2} gives gives  \eqref{inequlambda.esti1}.  
\vskip.075in
Finally, to obtain  \eqref{inequlambda.esti2} we  use the equation and \eqref{inequlambda.esti1}. 

\end{proof}

In order to proceed, we need the following  remark about  the reconstruction of a map from its derivatives. 
\begin{lem}\label{lem.defu} Assume \eqref{omega} and let $\theta\in {\bf H^{-1}_x}$ and $w\in {\bf L^2_{pot}}$ satisfy, for all $\phi\in \mathcal C$ and $i=1,\dots,d$,   the compatibility condition  
\be\label{compcond}
\lg \theta, \partial_{x_i}\phi\rg_{{\bf H_{x}^{-1}},{\bf H^1_x}} = \E\left[ w_i \partial_t \phi\right] 
\ee
Then there exists a  measurable map $u:\R^{d+1}\times \Omega\to \R$ such that, a.s., $\int_{\tilde Q_1}u(x,t,\omega)dxdt=0$, $Du= w$ and $\partial_t u= \theta$ in the sense of distributions. 
\end{lem}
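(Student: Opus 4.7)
The plan is to reconstruct $u$ via a realisation-based Poincar\'e argument: first recover a potential $\tilde u$ for the spatial gradient field $w$, and then correct it by a function of $t$ alone so that the time derivative matches $\theta$.

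Since $w\in {\bf L^2_{pot}}$, there exist $\phi_n\in\mathcal C$ with $D\phi_n\to w$ in $({\bf L^2})^d$. Passing to a subsequence, $D\phi_n(\cdot,\cdot,\omega)\to w(\cdot,\cdot,\omega)$ in $L^2_{\text{loc}}(\R^{d+1})$ for $\P$-a.s.\ $\omega$, so the realisation $w(\cdot,\cdot,\omega)$ is spatially curl-free as a distribution on $\R^{d+1}$. By mollifying $w$ in $x$, applying an explicit line-integral formula such as $\tilde u^\ep(x,t,\omega)=\int_0^1 w^\ep(sx,t,\omega)\cdot x\,ds$, and then passing to the limit as $\ep\to 0$, one obtains a measurable map $\tilde u:\R^{d+1}\times\Omega\to\R$ with $\tilde u(\cdot,\cdot,\omega)\in H^1_{\text{loc}}(\R^{d+1})$ and $D\tilde u=w$ for $\P$-a.s.\ $\omega$.

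The next step is to show that the defect $\alpha:=\partial_t\tilde u-\theta$ depends only on $t$. Realise $\theta\in{\bf H^{-1}_x}$ via its canonical decomposition $\theta=\theta_0+\sum_i\partial_{x_i}\theta_i$ with $\theta_0,\theta_i\in{\bf L^2}$, so that $\theta(\cdot,\cdot,\omega)$ is a well-defined distribution on $\R^{d+1}$ a.s. The compatibility \eqref{compcond}, combined with the integration by parts
\[
\langle \partial_t\tilde u,\partial_{x_i}\phi\rangle = -\E\Big[\int_{\tilde Q_1}\tilde u\,\partial_t\partial_{x_i}\phi\Big] = \E\Big[\int_{\tilde Q_1} w_i\,\partial_t\phi\Big]
\]
valid for $\phi\in\mathcal C$, yields $\langle\alpha,\partial_{x_i}\phi\rangle=0$ for every $i=1,\dots,d$ and every $\phi\in\mathcal C$. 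A density/periodisation argument then upgrades this to $\partial_{x_i}\alpha(\cdot,\cdot,\omega)=0$ in $\mathcal D'(\R^{d+1})$ for $\P$-a.s.\ $\omega$, whence $\alpha(x,t,\omega)=\beta(t,\omega)$ for some distribution $\beta(\cdot,\omega)$ on $\R$.

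To conclude, let $B(t,\omega)$ be a distributional primitive of $\beta(\cdot,\omega)$ in $t$ and set
\[
u(x,t,\omega):=\tilde u(x,t,\omega)-B(t,\omega)-c(\omega),
\]
with $c(\omega)$ chosen so that $\int_{\tilde Q_1}u(x,t,\omega)\,dx\,dt=0$. Then $Du=w$ and $\partial_t u=\theta$ in $\mathcal D'(\R^{d+1})$ for $\P$-a.s.\ $\omega$, and $u$ is measurable in $\omega$. The hardest part is the second step: converting the stationary duality identity \eqref{compcond}, which is only available against stationary test functions $\phi\in\mathcal C$, into a distributional statement on $\R^{d+1}$ for a.e.\ realisation; this requires the careful ${\bf L^2}$-decomposition of $\theta\in{\bf H^{-1}_x}$ together with an ergodic/periodisation argument allowing one to approximate compactly supported test functions by stationary ones. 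Tracking measurability in $\omega$, in particular for the primitive $B(t,\omega)$, is the other technical point.
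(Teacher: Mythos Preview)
Your route is genuinely different from the paper's, and the step you yourself flag as hardest is where the argument actually breaks down.

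The paper mollifies $\theta$ and $w$ \emph{simultaneously in space and time} with a symmetric kernel $K^\ep$. Because convolution with $K^\ep$ is self-adjoint on stationary $L^2$, the compatibility \eqref{compcond} transfers to the mollified pair and becomes the pointwise identity $\partial_{x_i}\theta^\ep=\partial_t w^\ep_i$ between smooth elements of $\mathcal C$; this now holds for every $(x,t,\omega)$, not merely in expectation. The closed one-form $(w^\ep,\theta^\ep)$ on $\R^{d+1}$ then has a smooth potential $u^\ep$ normalised by $\int_{\tilde Q_1}u^\ep=0$, a parabolic Poincar\'e inequality bounds $\|u^\ep\|_{L^2(\tilde Q_R)}$ by $\|w^\ep\|_{L^2(\tilde Q_R)}+\|\theta^\ep\|_{L^2(I_R,H^{-1}(Q_R))}$, and a diagonal weak-limit extraction yields $u$.

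In your scheme the unsupported claim is that $\partial_{x_i}\alpha(\cdot,\cdot,\omega)=0$ in $\mathcal D'(\R^{d+1})$ for a.e.\ $\omega$, which amounts to the realisation-wise identity $\partial_{x_i}\theta(\cdot,\cdot,\omega)=\partial_t w_i(\cdot,\cdot,\omega)$. But \eqref{compcond} is a pairing against \emph{stationary} test functions $\phi\in\mathcal C$; it says that $\partial_{x_i}\theta-\partial_t w_i$ vanishes as an element of a stationary negative-order space. Passing from this averaged statement to a distributional identity on $\R^{d+1}$ for a fixed realisation is exactly the nontrivial bridge, and ``density/periodisation'' does not cross it: compactly supported test functions on $\R^{d+1}$ cannot be approximated by stationary ones in any norm controlling the pairing, since stationarity forces the test function to repeat over all integer translates (with shifted $\omega$). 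The paper's space-time mollification of $\theta$ and $w$ is precisely the device that turns the averaged identity into a pointwise one; once you invoke it, you are running the paper's proof.

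A secondary issue is Step~1: the line-integral formula $\tilde u^\ep(x,t)=\int_0^1 w^\ep(sx,t)\cdot x\,ds$ does not behave well under $L^2_{\text{loc}}$ convergence of $w^\ep$ (the integral is over a Lebesgue-null set), so to pass to the limit you need a Poincar\'e bound anyway---at which point the construction collapses to the paper's, but done only in space.
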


For the proof, we need to use regularizations (convolutions) with  a kernel $K^\ep(x,t)= \ep^{-(d+1)}K(x/\ep,t/\ep)$ for $K:\R^{d+1}\to [0,+\infty)$  smooth, nonnegative, symmetric, compactly  supported and such that $\int_{\R^{d=1}} K dxdt=1$. 
\smallskip

For $u\in {\bf L^2}$,  define
$$
K^\ep\ast u(x,t,\omega)= \int_{\R^{d+1}} K^\ep(x-y,t-s) u(y,s,\omega)dyds.
$$
It is a classical fact that $K^\ep\ast u$ belongs to ${\mathcal C}$ and that
$$
\lim_{\ep\to0} \|u-K^\ep\ast u\|_{{\bf L^2}}=0. 
$$

\begin{proof}[The proof of Lemma~\ref{compcond}]  Fix $\ep>0$ and define $\theta^\ep \in {\bf H^{-1}_x}$ and $w^\ep$ so that, for all $\phi\in {\bf H^1_x}$ and $w^\ep = K^\ep \ast w$, 
\be\label{def.thetaep}
\lg \theta^\ep, \phi\rg_{{\bf H^{-1}_x},{\bf H^1_x}}= \lg \theta, K^\ep\ast \phi\rg_{{\bf H^{-1}_x},{\bf H^1_x}}
\ee

It is immediate that  $\theta^\ep, w^\ep$ belong to ${\mathcal C}$ and, in view of \eqref{compcond}, for all $i=1,\dots,d$, 
$$
\partial_{x_i} \theta^\ep= \partial_t w^\ep_i. 
$$
It follows  that there exists a measurable and smooth in $x,t$ map $u^\ep:\R^{d+1}\times \Omega\to \R$ such that $\partial_t u^\ep= \theta^\ep$,  $Du^\ep= w^\ep$, and, without loss of generality,  $\int_{\tilde Q_1}u^\ep= 0$.  
\smallskip

For any $R\geq 1$, Poincar\'e's inequality gives  (see, for instance, the proof of Lemma 4.2.1 in \cite{Kry}) 

$$
\|u^\ep(\cdot, \cdot,\omega)\|_{{ L^2}(\tilde Q_R)} \lesssim_{d,R} \|Du^\ep(\cdot, \cdot, \omega)\|_{{ L^2}(\tilde Q_R)}+ \|\partial_t u^\ep (\cdot, \cdot,\omega)\|_{{ L^2}(I_R, { H^{-1}}(Q_R))},
$$
and, thus, 
\begin{align*}
\E\left[ \|u^\ep(\cdot, \cdot,\omega)\|_{{ L^2}(\tilde Q_R)}^2\right] \lesssim_{d,R} \E\left[\|w^\ep\|_{{ L^2}}^2+ \|\theta^\ep\|_{{ L^2}(I_R, { H^{-1}}(Q_R))}^2 \right].
\end{align*}
Using a  diagonal argument, we can find  $\ep_n\to 0$  and  $u\in L^2_{loc}(\R^{d+1}\times \Omega)$ such that, for any $R$,  $u^{\ep_n} \rightharpoonup u$ in ${ L^2}(\tilde Q_R\times \Omega)$. 
\smallskip

It is, then,  easy to check  that $Du=w$, $\partial_t u= \theta$ and $\int_{\tilde Q_1}u=0$.

\end{proof}

We use Lemma~\ref{lem.defu} to obtain the following result which is one of the most crucial steps for the construction of the corrector.  
\begin{lem}\label{lem.Ewxi=0} Assume \eqref{omega}. If    $\theta\in {\bf H^{-1}_x}$, $w\in {\bf L^2_{pot}}$ and $\xi\in {\bf L^2}$ satisfy the compatibility condition  \eqref{compcond} and 
$$
\theta -{\rm div}( \xi)= 0 \ \ {\rm in} \ \ {\bf H^{-1}_x},
$$
then 
\[ \E\left[ \int_{\tilde Q_1} w\cdot \xi\right]=0.\]
\end{lem}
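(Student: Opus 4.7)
First I would apply Lemma~\ref{lem.defu} to produce a measurable $u:\R^{d+1}\times \Omega \to \R$ with $\int_{\tilde Q_1} u = 0$ a.s., $Du = w$ and $\partial_t u = \theta$ in the sense of distributions. The hypothesis $\theta-\mathrm{div}\,\xi = 0$ then reads $\partial_t u = \mathrm{div}\,\xi$ distributionally on $\R^{d+1}$. The idea is to multiply this equation by $u$ itself (after mollification) against a carefully scaled space--time cutoff: the ``$u\,\partial_t u$'' piece becomes $\tfrac12 \partial_t(u^2)$ and will be absorbed once one exploits a sublinear growth bound for $u$, while the ``$u\,\mathrm{div}\,\xi$'' piece, after integration by parts and stationarity averaging, produces precisely a constant multiple of the target quantity.

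Mollifying with the kernel $K^\varepsilon$ introduced before Lemma~\ref{lem.defu}, set $u^\varepsilon=K^\varepsilon * u$, $w^\varepsilon=Du^\varepsilon=K^\varepsilon*w$, $\theta^\varepsilon=\partial_t u^\varepsilon=K^\varepsilon*\theta$, and $\xi^\varepsilon=K^\varepsilon*\xi$. Then $w^\varepsilon,\theta^\varepsilon,\xi^\varepsilon$ are smooth and stationary (with zero mean), while $u^\varepsilon$ is smooth but generally not stationary, and the identity $\partial_t u^\varepsilon = \mathrm{div}\,\xi^\varepsilon$ holds pointwise. Fix a nonnegative parabolic cutoff $\eta_R(x,t)=\eta_1(x/R,t/R^2)$ with $\eta_1\in C_c^\infty(\R^{d+1})$, so that $|D\eta_R|\lesssim R^{-1}$, $|\partial_t\eta_R|\lesssim R^{-2}$, $\mathrm{supp}\,\eta_R \subset Q_{cR}\times I_{cR^2}$, and $\int\eta_R$ equals a positive constant times $R^{d+2}$. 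Multiplying $\partial_t u^\varepsilon=\mathrm{div}\,\xi^\varepsilon$ by $u^\varepsilon\eta_R$ and integrating by parts gives
\[
\int_{\R^{d+1}} Du^\varepsilon \cdot \xi^\varepsilon \, \eta_R\, dxdt = \tfrac12 \int (u^\varepsilon)^2 \partial_t\eta_R \, dxdt - \int u^\varepsilon\, D\eta_R \cdot \xi^\varepsilon \, dxdt.
\]
Taking expectations and using the stationarity of $Du^\varepsilon \cdot \xi^\varepsilon$, the left-hand side equals $C_\varepsilon \int \eta_R$ with $C_\varepsilon:=\mathbb{E}[\int_{\tilde Q_1} w^\varepsilon \cdot \xi^\varepsilon]$; Cauchy--Schwarz and the stationarity bound $\mathbb{E}[\|\xi^\varepsilon\|_{L^2(\mathrm{supp}\,\eta_R)}^2]\lesssim R^{d+2}$ bound the right-hand side by a constant times
\[
R^{-2}\,\mathbb{E}\bigl[\|u^\varepsilon\|_{L^2(\mathrm{supp}\,\eta_R)}^2\bigr] + R^{-1}R^{(d+2)/2}\,\mathbb{E}\bigl[\|u^\varepsilon\|_{L^2(\mathrm{supp}\,\eta_R)}^2\bigr]^{1/2}.
\]

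The decisive input is the sublinear growth
\[
\mathbb{E}\bigl[\|u^\varepsilon\|_{L^2(Q_{cR} \times I_{cR^2})}^2\bigr] = o(R^{d+4}) \qquad \text{as } R\to\infty,
\]
which forces both terms on the right to be $o(R^{d+2})$; since the left-hand side is $C_\varepsilon$ times a constant multiple of $R^{d+2}$, this yields $C_\varepsilon=0$ for every $\varepsilon>0$. Because $w^\varepsilon\to w$ and $\xi^\varepsilon\to \xi$ strongly in $\mathbf{L^2}$, passing $\varepsilon\to 0$ gives $\mathbb{E}[\int_{\tilde Q_1} w\cdot \xi]=\lim_{\varepsilon\to 0} C_\varepsilon=0$, as desired.

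The hard part will be the sublinearity estimate. Although both $Du^\varepsilon$ and $\partial_t u^\varepsilon$ are stationary with zero mean and $u^\varepsilon$ is normalized so that $\int_{\tilde Q_1} u^\varepsilon=0$ a.s., the function $u^\varepsilon$ itself is \emph{not} stationary, so the bound cannot be read off from the stationary $\mathbf{L^2}$ estimates. The plan is to combine an anisotropic Poincar\'e inequality on the parabolic cylinder $Q_R\times I_{R^2}$, of the form $\|u^\varepsilon-(u^\varepsilon)_R\|_{L^2}^2\lesssim R^2\|Du^\varepsilon\|_{L^2}^2+R^4\|\partial_t u^\varepsilon\|_{L^2(I_{R^2};H^{-1}_x(Q_R))}^2$, with an application of the ergodic theorem to the cube average $(u^\varepsilon)_{Q_R\times I_{R^2}}$ to control its growth --- a statement for functions with stationary gradients of the kind the authors signal is collected in the appendix.
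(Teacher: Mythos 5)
Your overall strategy is right and closely follows the paper: mollify via $K^\ep$, use Lemma~\ref{lem.defu} to produce $u^\ep$ with $Du^\ep=w^\ep$, $\partial_t u^\ep=\theta^\ep$ satisfying $\partial_t u^\ep={\rm div}(\xi^\ep)$, test against $u^\ep$ times a cutoff, identify the leading term via stationarity, and kill the error terms via a sublinearity estimate for $u^\ep$. But the choice of cutoff hides a genuine gap. You take a \emph{fixed} parabolic cutoff $\eta_R(x,t)=\eta_1(x/R,t/R^2)$, supported in $Q_{cR}\times I_{cR^2}$, and the resulting error bound requires
\[
\E\bigl[\|u^\ep\|^2_{L^2(Q_{cR}\times I_{cR^2})}\bigr]=o(R^{d+4}).
\]
This is a \emph{parabolic-scale} sublinearity for $u^\ep$. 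The only sublinearity the paper establishes (Lemma~\ref{lem.appendix}) is the \emph{hyperbolic} one, $\E\bigl[\int_{\tilde Q_R}|u|^2\bigr]=o(R^{d+3})$ with $\tilde Q_R=Q_R\times I_R$. The parabolic version is not a corollary of the hyperbolic one: covering $I_{cR^2}$ by $\sim R$ translates of $I_{cR}$ does not help, because $u^\ep$ itself is \emph{not} time-stationary (only its derivatives are), so the bound on $\tilde Q_R$ cannot simply be slid along the time axis. Indeed, on time scales $\sim R^2$ the fluctuations of the (non-stationary) cube average of $u^\ep$ are governed by a time integral of a boundary flux of $\xi^\ep$, and controlling these is a separate piece of analysis that the appendix does not provide.

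The paper avoids this obstacle by design. Instead of a fixed cutoff it uses the \emph{time-dependent} cutoff $\psi\bigl(x,R(t)\bigr)$ with $R(t)=(T-c_0 t)^{1/2}$ shrinking parabolically. Two things go right with this choice: (i) because $R'<0$ and $|D\psi|\lesssim\partial_R\psi$, the term $R'(t)\int\tfrac{(u^\ep)^2}{2}\partial_R\psi$ that comes from differentiating the cutoff is nonpositive and absorbs the annular error coming from $D\psi$, so no a priori control of $u^\ep$ in the spatial annulus is needed; and (ii) after integrating the resulting one-sided inequality in $t$ and then once more in the shifted initial time $h\in[0,T^{1/2}]$, the domain one is left with is exactly the \emph{hyperbolic} box $Q_{T^{1/2}+1}\times[0,T^{1/2}]$, which is precisely the setting covered by Lemma~\ref{lem.appendix}. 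In other words, the time-varying cutoff converts the apparent need for parabolic sublinearity into the hyperbolic sublinearity that the paper actually proves.

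Your proposed route to the parabolic sublinearity (anisotropic Poincar\'e on $Q_R\times I_{R^2}$ plus the ergodic theorem on the cube average) does not close the gap as stated. The Poincar\'e inequality alone only gives $\|u^\ep-(u^\ep)_R\|^2_{L^2(Q_R\times I_{R^2})}=O(R^{d+4})$, not $o(R^{d+4})$; to upgrade $O$ to $o$ one would need the parabolic analogue of the Aubin--Lions compactness step in the appendix (show that $v^\ep(x,t)=\ep u(x/\ep,t/\ep^2)$ converges in $L^2_{loc}$ along subsequences to a function with vanishing space and time derivatives) together with a separate argument that the limit is zero, which in turn requires control of $\int_{Q_R\times I_{R^2}}u=o(R^{d+3})$; none of this is proved in the paper, and the last piece is again nontrivial because $u$ is not time-stationary. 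So your plan, while pointing at the right kind of statement, would in fact require rewriting the appendix, whereas the paper's time-varying cutoff makes the existing Appendix lemma suffice.

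A small stylistic note: the paper argues by contradiction (assuming $\hat\kappa=\E[\int_{\tilde Q_1}w\cdot\xi]>0$, then the opposite sign), while you aim for the identity directly; this is immaterial, but it does require a justification that $\E[\int_{\tilde Q_1}w^\ep\cdot\xi^\ep]\to\E[\int_{\tilde Q_1}w\cdot\xi]$ as $\ep\to0$, which you supply (strong $\mathbf{L^2}$ convergence of mollifications) and which is fine.
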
 

\begin{proof} 
Let $\theta^\ep$ be defined by \eqref{def.thetaep}, $w^\ep= K^\ep\ast w$ and $\xi^\ep= K^\ep \ast \xi$. Then, 
$$
\theta^\ep -{\rm div}(\xi^\ep)=0.
$$
Lemma \ref{lem.defu} and its proof yield a measurable in $\omega$ and smooth in $(x,t)$ map $u^\ep:\R^{d+1}\times \Omega\to \R$
such that $Du^\ep= w^\ep$ and $\partial_t u^\ep= \theta^\ep$, and, in the classical sense,  
\be\label{kjhnrsdtf}
\partial_t u^\ep -{\rm div} (\xi^\ep)= 0 \ \  {\rm in} \ \ \R^{d+1}\times \Omega.
\ee
Arguing by contradiction,  we  assume  that $$\hat \kappa=\E\left[\int_{\tilde Q_1} \xi \cdot w \right] >0.$$
Since the map
$
t\to \E\left[ \int_{Q_1} w(x,t)\cdot \xi(x,t)dx\right]
$
is well-defined and constant,  we actually have, for all $t\in \R$, 
\be\label{kljznesrdOLD}
\E\left[ \int_{Q_1} w(x,t)\cdot \xi(x,t)dx\right]=\hat \kappa>0. 
\ee
In view of the stationarity of  $w$ and $\xi$,  \eqref{kljznesrdOLD} implies that there exist $\ep_0>0$ and $0<\kappa<\hat \kappa$ such that, for all $t\in \R$,  $\ep\in (0,\ep_0)$ and $R>0$, 
\be\label{kljznesrd}
\E\left[ \int_{Q_R} w^\ep(x,t)\cdot \xi^\ep (x,t)dx\right]\geq  \kappa R^d. 
\ee

Fix $R>0$ and let $\psi=\psi_R \in C^1(\R^d\times [0,+\infty))$ be such that  
$$\begin{cases}\psi(x,R)= 0 \ \ \text{ in} \ \  \R^d\backslash Q_{R+1}, \ \  \psi(x,R)= 1 \ \ \text{ in} \ \  Q_R,  \ \ \|D\psi\|_\infty+\|\partial_R\psi\|_\infty\lesssim_d 1,\\[1.5mm]
 \text{ and}  \ \  |D\psi(x,R)| \lesssim_d  \partial_R \psi(x,R).\end{cases}$$
Note  that such $\psi$  can  be constructed  by convolving  in space the map  ${x\to \bf 1}_{Q_{R+1/2}}(x)$ with 
a nonnegative kernel with  sufficiently small support. 
\smallskip

Finally, for some $c_0>0$ and $T$ sufficiently large to be chosen later,  set $R(t)= (T-c_0 t)^{1/2}$. 
\smallskip

Then 
\begin{align*}
& \frac{d}{dt} \int_{\R^d}\frac{(u^\ep)^2(x,t)}{2} \psi(x,R(t)) dx \notag\\
& \qquad =  R'(t)\int_{\R^d}  \frac{(u^\ep)^2(x,t)}{2} \partial_R \psi(x,R(t)) dx + \int_{\R^d}u^\ep(x,t)\partial_t u^\ep(x,t)\psi(x,R(t))  dx \notag\\
& \qquad   =R'(t)\int_{\R^d}  \frac{(u^\ep)^2(x,t)}{2} \partial_R \psi(x,R(t)) dx - \int_{\R^d}\xi^\ep(x,t)\cdot w^\ep(x,t)\psi(x,R(t))dx \notag\\
& \qquad \qquad - \int_{\R^d}u^\ep(x,t) w^\ep(x,t)\cdot  D\psi(x,R(t))dx.
\end{align*}

Young's inequality yields, for any $\alpha>0$, 
\begin{align*}
& \frac{d}{dt} \int_{\R^d}\frac{(u^\ep)^2(x,t)}{2} \psi(x,R(t)) dx \notag \\
& \qquad   \leq  R'(t)\int_{\R^d}  \frac{(u^\ep)^2(x,t)}{2} \partial_R \psi(x,R(t)) dx - \int_{Q_{R(t)}}\xi^\ep(x,t)\cdot w^\ep(x,t)dx \notag \\
& \qquad + \int_{Q_{R(t)+1}\backslash Q_{R(t)}}|\xi^\ep(x,t)|\ |w^\ep(x,t)|dx +\alpha |R'(t)| \int_{\R^d}\frac{(u^\ep)^2(x,t)}{2} |D\psi(x,R(t))| dx \notag
\\
& \qquad  
+ \dfrac{C}{\alpha|R'(t)|} \int_{Q_{R(t)+1}\backslash Q_{R(t)}} |w^\ep(x,t)|^2 dx.
\end{align*}
Recall that, by construction,   $R'<0$, $\|D\psi\|_\infty\lesssim_d C$ and $|D\psi|\lesssim_d \partial_R \psi$
\smallskip

Hence, choosing from now on $\alpha$ small enough depending only on  $d$, taking expectations and using \eqref{kljznesrd}, we find
\begin{align}\label{kqhjsndkjnjj}
 \frac{d}{dt}\E\left[ \int_{\R^d}\frac{(u^\ep)^2(x,t)}{2} \psi(x,R(t)) dx\right] 
&  + \kappa (R(t))^d  - \E\left[ \int_{Q_{R(t)+1}\backslash Q_{R(t)}}|\xi^\ep(x,t)|\ |w^\ep(x,t)|dx\right] \notag \\
&   
\lesssim_\alpha \dfrac{1}{|R'(t)|}\E\left[ \int_{Q_{R(t)+1}\backslash Q_{R(t)}} |w^\ep(x,t)|^2 dx\right].
\end{align}
We use next  the stationarity of $w^\ep$ and $\xi^\ep$, and the facts that $|Q_{R(t)+1}\backslash Q_{R(t)}| \lesssim C(R(t))^{d-1}$,    $\xi^\ep, w^\ep \in {\bf L^2}$, and  $R'(t)= c_0(R(t))^{-1}$ to get, for some $C>0$, 

\begin{align*}
 \frac{d}{dt} \E\left[\int_{\R^d}\frac{(u^\ep)^2(x,t)}{2} \psi(x,R(t)) dx\right] &  \leq - (R(t))^d \kappa + C(|R'(t)|^{-1}+1)(R(t))^{d-1}\\
& = -(R(t))^d (\kappa -Cc_0^{-1}-C(R(t))^{-1}),
\end{align*}

Choosing $c_0>1$ large so that $\kappa -Cc_0^{-1}\geq \kappa/2$ and $t\leq t_T=T-16C^2\kappa^{-2}c_0^{-1}$, in order to have   $C(R(t))^{-1}\leq \kappa/4$ on $[0,t_T]$, we find, for all $t\in [0,t_T]$,  

\begin{align*}
 \frac{d}{dt} \E\left[\int_{\R^d}\frac{(u^\ep)^2(x,t)}{2} \psi(x,R(t)) dx\right] &  \leq - (R(t))^d \dfrac{\kappa}{4}. 
\end{align*}

Integration in time  over  $t\in [h,c_0^{-1}T]$ for $h\in [0,T^{1/2}]$ (note that,  if  $c_0$ and $T$ are large enough, $c_0^{-1}T <t_T$) and the fact that $\psi\geq0$ give
$$
\E\left[\int_{\R^d}\frac{(u^\ep)^2(x,h)}{2} \psi(x,R(h)) dx\right]   \ge \dfrac{\kappa}{4} \int_h^{c_0^{-1}T}(R(t))^d dt.
$$
 Integrating once more in time over  $h\in [0,T^{1/2}]$ and noting  that, since 
$R(h)\le T^{1/2}$, $ \psi(x,R(h))\le 1_{Q_{T^{1/2}+1}},$ we get 
\begin{align*}
& \E\left[\int_0^{T^{1/2}}\int_{Q_{T^{1/2}+1}}\frac{(u^\ep)^2(x,h)}{2}  dxdh\right] 
\geq C^{-1} \kappa T^{(d+3)/2}. 
\end{align*}
Our goal  is to apply \ref{lem.appendix} in the Appendix. For this,  we note that, since $Du^\ep= w^\ep \in {\bf L^2_{pot}}$,   $\E[\int_{Q_1}Du^\ep(\cdot,t)]=0$. Moreover,  in view of \eqref{kjhnrsdtf} and the fact that  $\xi^\ep\in {\bf L^2}$ is stationary,
$$
\E\Big[\int_{\tilde Q_1} \partial_t u^\ep\Big]=\lg \partial_t u^\ep, 1\rg_{{\bf H^{-1}_x, H^1_x}}= \lg {\rm div}(\xi^\ep), 1\rg_{{\bf H^{-1}_x, H^1_x}}=0
$$ 

Hence, we can apply Lemma~\ref{lem.appendix} which implies that, for any $\delta>0$,  there exists $R_\delta$ such that, for all   $R\geq R_\delta$, 
$$
 \E\left[\int_0^R\int_{Q_R} (u^\ep(x,h))^2dxdh\right]\leq \delta R^{d+3}. 
$$ 
Choosing  $R= T^{1/2}+1$ and $T$ large, we obtain 
\begin{align*}
 \frac{\delta}{2} (T^{1/2}+1)^{d+3} & \ge  \E\left[\int_0^{T^{1/2}}\!\!\!\int_{Q_{T^{1/2}+1}}\!\!\!\!\!\frac{(u^\ep)^2(x,h)}{2}  dxdh\right] 
 \geq C^{-1} \kappa T^{\frac{d+3}{2}},
\end{align*}
which yields a contradiction if  $\delta$ is small enough and $T$ is  large enough. 
\smallskip

It follows that we must have  $$\E\left[\int_{\tilde Q_1} \xi\cdot w\right] \leq 0.$$ 
 
 Arguing similarly  for negative $t$ gives  the opposite inequality. 

\end{proof}

The next lemma is the step that provides the sought after corrector as well as the properties (monotonicity and Lipschitz continuity) of $\ol a$.
\begin{lem} \label{m10} Assume \eqref{omega}, \eqref{T},
and \eqref{a2}. For any $p\in \R^d$ there exists a unique pair $(\theta^p, w^p)\in {\bf H^{-1}_x}\times {\bf L^2_{pot}}$ satisfying  
\eqref{compcond} and 
\be\label{eqthetaw}
\theta^p-{\rm div}(a(w^p+p,x,t,\omega))=0 \ \  {\rm in} \ \   {\bf H^{-1}_x}. 
\ee
Moreover, for all  $p,p'\in \R^d$, 
\be\label{eq.reguwp}
\|w^p-w^{p'}\|_{{\bf L^2}}\lesssim_a  |p-p'|. 
\ee
Finally, the vector field $\ol a$ defined by \eqref{def.barabara} is  monotone and  Lipschitz continuous.  
\end{lem}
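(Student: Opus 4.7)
The plan is to pass to the weak limit in the approximate correctors $\chi^{\lambda,p}$ from Lemma~\ref{lem.23} as $\lambda \to 0^+$. The uniform bound \eqref{inequlambda.esti1} lets me extract a subsequence along which $D\chi^{\lambda_n,p} \rightharpoonup w^p$ in $({\bf L^2})^d$, and since each $D\chi^{\lambda_n,p}$ lies in ${\bf L^2_{pot}}$ so does $w^p$. The Lipschitz bound on $a$ together with \eqref{a2} gives $a(D\chi^{\lambda_n,p}+p,\cdot) \rightharpoonup \xi^p$ in ${\bf L^2}$, while \eqref{inequlambda.esti2} yields $\partial_t \chi^{\lambda_n,p} \rightharpoonup \theta^p$ in ${\bf H^{-1}_x}$. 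The zeroth and second order terms $\lambda \chi^{\lambda_n,p}$ and $\lambda \partial_{tt}\chi^{\lambda_n,p}$ vanish in the sense of distributions by \eqref{inequlambda.esti1}, so the limit equation reads $\theta^p - \div(\xi^p) = 0$ in ${\bf H^{-1}_x}$. The compatibility condition \eqref{compcond} between $\theta^p$ and $w^p$ is inherited from the approximate correctors, which are smooth stationary functions and so satisfy it by integration by parts.

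The main obstacle is identifying $\xi^p$ with $a(w^p+p,\cdot)$; I handle it by a Minty-type argument whose crucial ingredient is Lemma~\ref{lem.Ewxi=0}. Testing \eqref{eq.approxcorr} against $\chi^{\lambda_n,p}$ itself, and using that $\E[\int_{\tilde Q_1}\chi \partial_t\chi] = 0$ by stationarity in time, gives
\[
\E\!\left[\int_{\tilde Q_1}\! a(D\chi^{\lambda_n,p}\!+\!p)\cdot D\chi^{\lambda_n,p}\right] = -\E\!\left[\int_{\tilde Q_1}\!\lambda (\chi^{\lambda_n,p})^2 + \lambda(\partial_t \chi^{\lambda_n,p})^2\right] \underset{n\to\infty}{\longrightarrow} 0.
\]
On the other hand, Lemma~\ref{lem.Ewxi=0} applied to the limit triple $(\theta^p, w^p, \xi^p)$ delivers $\E[\int_{\tilde Q_1} w^p\cdot \xi^p] = 0$. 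Combining these with the monotonicity inequality $\E[\int(a(D\chi^{\lambda_n,p}+p)-a(D\psi+p))\cdot D(\chi^{\lambda_n,p}-\psi)] \geq 0$ for $\psi\in \mathcal C$, passing to the weak limit, and invoking the density of $\{D\psi : \psi\in\mathcal C\}$ in ${\bf L^2_{pot}}$ yields
\[
\E\left[\int_{\tilde Q_1}(\xi^p - a(\eta+p,\cdot))\cdot(w^p-\eta)\right]\ \geq\ 0 \qquad \text{for all } \eta \in {\bf L^2_{pot}}.
\]
Taking $\eta = w^p - tv$ for arbitrary $v\in {\bf L^2_{pot}}$ and letting $t\to 0^+$ identifies $\xi^p = a(w^p+p,\cdot)$ a.e., so \eqref{eqthetaw} holds.

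Uniqueness of $(\theta^p,w^p)$ follows by subtracting two solutions: the difference satisfies the compatibility condition and the equation $\theta_1-\theta_2 - \div(a(w_1+p)-a(w_2+p)) = 0$, so Lemma~\ref{lem.Ewxi=0} gives $\E[\int(w_1-w_2)\cdot(a(w_1+p)-a(w_2+p))] = 0$, and strong monotonicity of $a$ forces $w_1=w_2$ and then $\theta_1=\theta_2$. The same idea proves \eqref{eq.reguwp}: the pair $(\theta^p-\theta^{p'}, w^p-w^{p'})$ is compatible and satisfies $(\theta^p-\theta^{p'}) - \div(a(w^p+p)-a(w^{p'}+p')) = 0$, hence Lemma~\ref{lem.Ewxi=0} yields $\E[\int(w^p-w^{p'})\cdot(a(w^p+p)-a(w^{p'}+p'))] = 0$. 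Combining with the monotonicity estimate
\[
C_0^{-1}\E\!\left[\int_{\tilde Q_1}|(w^p+p)-(w^{p'}+p')|^2\right]\ \le\ \E\!\left[\int_{\tilde Q_1}(p-p')\cdot(a(w^p+p)-a(w^{p'}+p'))\right]
\]
and the Lipschitz bound on $a$ gives the claimed estimate on $\|w^p-w^{p'}\|_{{\bf L^2}}$.

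Finally, for $\bar a$, writing $(\bar a(p)-\bar a(p'))\cdot(p-p') = \E[\int(a(w^p+p)-a(w^{p'}+p'))\cdot(p-p')]$ and adding the vanishing quantity $\E[\int(a(w^p+p)-a(w^{p'}+p'))\cdot(w^p-w^{p'})]$ produced by Lemma~\ref{lem.Ewxi=0}, strong monotonicity of $a$ gives $(\bar a(p)-\bar a(p'))\cdot(p-p') \ge C_0^{-1}\E[\int|(w^p+p)-(w^{p'}+p')|^2]$; since $w^p, w^{p'}\in {\bf L^2_{pot}}$ have mean zero, the cross term vanishes and this expression is at least $C_0^{-1}|p-p'|^2$. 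The Lipschitz continuity of $\bar a$ is immediate from that of $a$ together with \eqref{eq.reguwp}.
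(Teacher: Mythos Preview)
Your argument follows essentially the same route as the paper: extract weak limits of $D\chi^{\lambda_n,p}$, $\partial_t\chi^{\lambda_n,p}$, and $a(D\chi^{\lambda_n,p}+p)$ from Lemma~\ref{lem.23}, pass to the limit in the equation, and run a Minty argument hinged on Lemma~\ref{lem.Ewxi=0}; uniqueness, \eqref{eq.reguwp}, and the properties of $\bar a$ are handled exactly as in the paper.

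Two imprecisions are worth flagging. First, your displayed convergence
\[
\E\!\left[\int_{\tilde Q_1} a(D\chi^{\lambda_n,p}+p)\cdot D\chi^{\lambda_n,p}\right]=-\E\!\left[\int_{\tilde Q_1}\lambda_n(\chi^{\lambda_n,p})^2+\lambda_n(\partial_t\chi^{\lambda_n,p})^2\right]\underset{n\to\infty}{\longrightarrow}0
\]
is not justified by \eqref{inequlambda.esti1}, which only gives boundedness of the right-hand side. What you actually have (and what suffices for the Minty step) is that this quantity is $\leq 0$, hence $\limsup\leq 0$; combining with the weak limits of the cross terms still yields the desired inequality. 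The paper circumvents this by including the $\lambda(\chi^{\lambda,p}-\phi)^2$ and $\lambda(\partial_t\chi^{\lambda,p}-\partial_t\phi)^2$ terms in the monotonicity inequality and cancelling the quadratic-in-$\chi^{\lambda,p}$ pieces against the equation identity, so that only terms of the form $\lambda\chi^{\lambda,p}\phi$ remain, and these tend to $0$ by Cauchy--Schwarz.

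Second, the Minty step with $\eta=w^p-tv$, $v\in{\bf L^2_{pot}}$, does \emph{not} identify $\xi^p=a(w^p+p,\cdot)$ a.e.; it only gives $\E[\int_{\tilde Q_1}(\xi^p-a(w^p+p))\cdot v]=0$ for all $v\in{\bf L^2_{pot}}$, i.e.\ orthogonality to ${\bf L^2_{pot}}$. That is precisely what is needed, since testing against $D\phi$ with $\phi\in{\bf H^1_x}$ shows $\div(\xi^p)=\div(a(w^p+p))$ in ${\bf H^{-1}_x}$, and hence \eqref{eqthetaw}. The paper is careful to state only this weaker (and correct) conclusion.
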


\begin{proof} Let $\chi^{\lambda, p}$ be given by  Lemma \ref{lem.23}. In view of 
\eqref{inequlambda.esti1} and \eqref{inequlambda.esti2}, there exist  a subsequence $\lambda_n\to 0$, $w\in {\bf L^2_{pot}}$, $\theta \in {\bf H^{-1}}$ and  $\xi \in {\bf L^2}$ such that $D\chi^{\lambda_n,p}
\rightharpoonup  w$,  $\partial_t \chi^{\lambda_n, p} \rightharpoonup \theta$ , and $a(D\chi^{\lambda_n,p}+p) \rightharpoonup  \xi$ in their respective spaces. 
\smallskip

Moreover, in view of \eqref{inequlambda.esti2}, for all $\phi\in {\bf H^1}$, 
$$
\lg \theta, \phi\rg_{{\bf L^2}} \lesssim_p \|D\phi\|_{{\bf L^2}}
$$
which means that, in fact,  $\theta \in {\bf H^{-1}_x}$. 
\smallskip

Note also that, since the pair $(\partial_t \chi^{\lambda,p}, D \chi^{\lambda,p})$ satisfies  \eqref{compcond}, so does $(\theta, w)$.
\smallskip

Finally, \eqref{eq.approxcorr} implies 
\be\label{oizuaesnf}
\theta - {\rm div}(\xi)= 0  \ \  {\rm in} \ \  {\bf H^{-1}_x}. 
\ee

It remains to check that \eqref{eqthetaw} holds. As we show below, this is a consequence of the monotonicity of $a$, which gives that,  for any test function $\phi\in \mathcal C$,  
$$
\E\left[\int_{\tilde Q_1} \lambda (\chi^{\lambda,p}-\phi)^2+ \lambda (\partial_t \chi^{\lambda,p}-\partial_t \phi)^2 + (a(D\chi^{\lambda,p}+p)-a(D\phi+p))\cdot (D\chi^{\lambda,p}-D\phi) \right] \geq 0.
$$
Multiplying \eqref{eq.approxcorr} by $\chi^{\lambda,p}$ and taking expectation, we find  
$$
\E\left[\int_{\tilde Q_1} \lambda (\chi^{\lambda,p})^2+ \lambda (\partial_t \chi^{\lambda,p})^2 + a(D\chi^{\lambda,p}+p)\cdot D\chi^{\lambda,p})\right]= 0,
$$
and, thus,  
\begin{align*}
\E\Bigl[\int_{\tilde Q_1} \lambda (-2 \chi^{\lambda,p}\phi+\phi^2)+ \lambda (-2\partial_t \chi^{\lambda,p}\partial_t \phi+(\partial_t \phi)^2) - a(D\chi^{\lambda,p}+p)\cdot D\phi \\
  -a(D\phi+p)\cdot (D\chi^{\lambda,p}-D\phi)))\Bigr] \geq 0.
\end{align*}
Passing  to the limit $\lambda_n \to 0$, in view of the estimates on $\chi^{\lambda,p}$ we get 
\begin{align*}
\E\left[ \int_{\tilde Q_1} - \xi \cdot D\phi -a(D\phi+p)\cdot (w-D\phi) \right]\geq 0.
\end{align*}
Since this last inequality   holds for any $\phi\in {\mathcal C}$, we also have, for any $z\in {\bf L^2_{pot}}$, 
$$
\E\left[ \int_{\tilde Q_1}- \xi \cdot z -a(z+p)\cdot (w-z) \right]\geq 0.
$$
Choose $z= w+\theta z'$ with  $z'\in {\bf L^2_{pot}}$. Then, after dividing by $\theta$ and letting $\theta\to0$, in view of Lemma \eqref{lem.Ewxi=0},  we get 
$$
\E\left[ \int_{\tilde Q_1}- \xi \cdot z' +a(w+p)\cdot z' \right]\geq \limsup_{\theta\to0} \frac{1}{\theta} \E\left[\int_{\tilde Q_1} \xi\cdot w\right]= 0,
$$
Since  the last inequality holds for any $z'\in {\bf L^2_{pot}}$, we infer that 
\be\label{m9}
\E\left[ \int_{\tilde Q_1}- \xi \cdot z' +a(w+p)\cdot z' \right] =0.
\ee
Going  back to \eqref{oizuaesnf}, \eqref{m9}  implies that, for any $\phi\in {\bf H^1_x}$, 
$$
\lg \theta -{\rm div}(a(w+p)), \phi\rg_{{\bf H^{-1}_x},{\bf H^1_x}} =   \E\left[\int_{\tilde Q_1}  \xi \cdot D\phi -a(w+p)\cdot D\phi \right] =0,
$$
and, hence,   $(\theta, w)$ satisfies \eqref{eqthetaw}. 
\smallskip

Next we prove at the same time the uniqueness of $(\theta, w)$ and the  monotonicity  of $\overline a$. 
\smallskip

Let $p^1\in \R^d$ and $(\theta^1,w^1)$ be a solution associated with $p^1$, and  set $\xi^1= a(w^1+p^1)$. Then 
$$
\theta-\theta^1 -{\rm div}(\xi-\xi^1)=0.
$$
Applying Lemma \ref{lem.Ewxi=0}  to the pair $(\theta-\theta^1,w-w^1),$ we find  $$\E\left[ \int_{\tilde Q_1}(\xi-\xi^1)\cdot (w-w^1)\right]=0.$$

The monotonicity of $\ol a$ follows from the following calculation that uses the fact that, since  $w-w^1\in {\bf L^2_{pot}}$, we have $\E[\int_{\tilde Q_1} w-w^1]=0$:
\begin{align*}
&( \overline a(p)-\overline a(p^1))\cdot (p-p^1) = \E\left[ \int_{\tilde Q_1}(a(w+p)- a(w^1+p^1))\cdot (w+p-w^1-p^1)\right]\\
&\qquad  \geq C_0^{-1} \E\left[ \int_{\tilde Q_1}|w+p-w^1-p^1|^2\right] = C_0^{-1} (\E\left[ \int_{\tilde Q_1}|w-w^1|^2\right]+|p-p^1|^2).
\end{align*}

The uniqueness of $(\theta, w)$ also follows from the inequality above.  Indeed set  $p^1=p$. It follows that   $w=w^1$, which  in turn implies that  $\theta=\theta^1$. 
\smallskip

The Lipschitz continuity follows from the observation that 
\begin{align*}
|\overline a(p)-\overline a(p^1)| & \leq \E\left[ \int_{\tilde Q_1} | a(w+p)- a(w^1+p^1)| \right] \leq C_0(\E^{1/2}\left[\int_{\tilde Q_1} |w-w^1|^2\right]+ |p-p^1| )\\
& \leq C_0( \left(( \overline a(p)-\overline a(p^1))\cdot (p-p^1)\right)^{1/2}+ |p-p^1|) \leq \frac12 |\overline a(p)-\overline a(p^1)|+ C|p-p^1|.
\end{align*}
Note that the above  also yields \eqref{eq.reguwp}.

\end{proof}

We have now all the necessary ingredients to prove Theorem~\ref{thm.corrector}.
\begin{proof}[Proof of Theorem \ref{thm.corrector}] Fix $p\in \R^d$ and  let $(\theta^p,w^p)$ and $\chi^p$ 
be given respectively by Lemma~\ref{m10} and Lemma \ref{lem.defu}.  
\smallskip

Then, for  $\chi^\ep(x,t;p,\omega)=\ep  \chi^p(\dfrac{x}{\ep},\dfrac{t}{\ep^2},\omega)$ and $a^\ep(p,x,t,\omega)= a(p, \dfrac{x}{\ep}, \dfrac{t}{\ep^2},\omega)$, we have 
\be\label{eq.xiep}
\partial_t \chi^\ep-{\rm div}(a^\ep(p+D\chi^\ep, x,t))=0\ \ {\rm in} \ \ R^d\times \R.
\ee
%
%
First we show that there exists a universal constant $C_0$ such that, $\P-$a.s. and for any $R,T>0$, 
\begin{align}\label{Step11}
&   \underset{\ep \to 0} \limsup \int_{0}^{T} \int_{Q_R} (\chi^\ep(x,t))^2dxdt  \leq C_0T^{3}R^{d-2}\E\left[ \int_{\tilde Q_1} |a(D\chi+p)|^2\right].  
\end{align}
Fix  $\xi \in C^\oo(\R;[0,1]) $ such that $\xi\equiv 0$ in  $(-\infty,-1)$, $\xi\equiv 1$ in $[0,+\infty)$ and $\xi'\leq 2$ and set 
$$
\phi(x,s,t)=\xi\big((3-2t^{-1}s) -R^{-1}|x|_\infty\big). 
$$
We note for later use that, since  $1\leq (3-2t^{-1}s)\leq 3$ for $s\in[0,t]$, $\phi(x,s,t)=1$ in $Q_R$, while $\phi(x,s,t)=0$ in $\R^d\backslash Q_{4R}$. 
\smallskip

Using the equation satisfied by $\chi^\ep$  and Young's inequality,  we find, for any $t>0$ fixed and any $s\in (0,t)$, 
\begin{align*}
& \frac{d}{ds} \int_{\R^d} \frac12 (\chi^\ep(s))^2\phi(x,s,t) =
\int_{\R^d}  \frac12 (\chi^\ep)^2 \partial_s \phi
- (a^\ep D\chi^\ep \phi + \chi^\ep a^\ep D\phi) .
\\[1.5mm]
&                                                                   
\leq \int_{\R^d}  \frac12 (\chi^\ep)^2 \partial_s \phi
- a^\ep D\chi^\ep \phi  + 
\frac{1}{2}R^{-1}t |a^\ep|^2 |D\phi|+ \frac{1}{2} Rt^{-1}(\chi^\ep)^2 |D\phi|.
\end{align*}

The computation above, which  here is made at a formal level, can be  easily be rigorous by regularizing $\chi^\ep$ by convolution. 
\smallskip

Since  $\partial_s\phi= -2t^{-1}\xi' $ while $|D\phi|\leq R^{-1}\xi'$, we can  absorb the last term in the righthand side into the first one to obtain
\begin{align*}
& \frac{d}{ds} \int_{\R^d} \frac12 (\chi^\ep(s))^2\phi(x,s,t) \leq 
 \int_{\R^d}  
- a^\ep D\chi^\ep \phi + R^{-2}t  \int_{Q_{4R}} |a^\ep|^2  .
\end{align*}
Integrating  the above inequality  in time, between $0$ and $t$ and using  the definition of $\phi$ we get  
\begin{align*}
& \int_{Q_R} \frac12 (\chi^\ep(t))^2dx \leq \int_{Q_{4R}} \frac12 (\chi^\ep(0))^2dx\\
& \qquad  -
\int_{0}^{t}  \int_{\R^d}  
a^\ep(s) D\chi^\ep(s) \phi(x,s,t) dxds+ R^{-2}t  \int_{0}^{t} \int_{Q_{4R}} |a^\ep(s)|^2 dxds.
\end{align*}
A second integration in $t\in (0, T)$ gives 
\begin{align*}
&  \int_{0}^{T} \int_{Q_R} \frac12 (\chi^\ep(t))^2 dxdt \leq  T \int_{Q_{4R}} \frac12 (\chi^\ep(0))^2dx\\
& \qquad  -
\int_{0}^{T} \int_{0}^{t}  \int_{\R^d}  
 a^\ep(s) D\chi^\ep(s) \phi(x,s,t)  dx dsdt \\
 & \qquad  + R^{-2} \int_{0}^{T}t\int_{0}^{t} \int_{Q_{4R}} |a^\ep(s)|^2  dxdsdt.
\end{align*}
We now let $\ep\to 0$. It follows from 
 Lemma \ref{lem.appendix}
and the ergodic theorem that,  $\P-$a.s., 
\begin{align}\label{kqeusrndxc}
&\underset{\ep \to 0}  \limsup \int_{0}^{T} \int_{Q_R} \frac12(\chi^\ep(t))^2 dxdt \leq \\
& \qquad -
\int_{0}^{T} \int_{0}^{t} \int_{\R^d}  \E\left[ \int_{\tilde Q_1} a(D\chi+p)\cdot D\chi\right] \phi(x,s,t) dx dsdt \notag \\
& \qquad  +R^{-2}T \int_{0}^{T} \int_{0}^{t} \int_{Q_{4R}} 
\E\left[ \int_{\tilde Q_1} |a(D\chi+p)|^2\right]dxdsdt .
\end{align}
Lemma \ref{lem.Ewxi=0} gives that  the first term in the right-hand side vanishes. Thus,  
\begin{align*}
&\underset{\ep \to 0} \limsup   \int_{0}^{T} \int_{Q_R} \frac12(\chi^\ep(t))^2dxdt \lesssim R^{d-2}T^3\E\left[ \int_{\tilde Q_1} |a(D\chi+p)|^2\right],
\end{align*}
and, hence,   \eqref{Step11}.
\smallskip

A symmetric argument yields that, $\P-$a.s.,   
\begin{align}\label{lem.convconv}
&   \underset{\ep \to 0}\limsup \int_{-T}^{T} \int_{Q_R} (\chi^\ep(x,t))^2dxdt  \leq C_0T^{3}R^{d-2}\E\left[ \int_{\tilde Q_1} |a(D\chi+p)|^2\right].  
\end{align}

Next we show the  convergence of $(\chi^\ep)$ to $0$. 
\smallskip

Let $\omega\in \Omega$ be such that  \eqref{lem.convconv} holds for any $T,R>0$. Then, in view of \eqref{lem.convconv},  the families $(\chi^\ep)_{\ep>0}$,  $(D\chi^{\ep})_{\ep>0}$ and $(\partial_t \chi^{\ep})_{\ep>0}$ are respectively  is bounded in $L^2_{loc}(\R^d\times \R)$, $L^2_{loc}(\R^d\times \R)$ and  $L^2_{loc}(H^{-1})$. 
Hence, in view of the classical Lions-Aubin Lemma~\cite{Au, Li},  the  family  $(\chi^{\ep})_{\ep>0}$ is relatively compact in $L^2_{loc}(\R^{d+1})$. 
\smallskip

Let  $(\chi^{\ep_{n}})$ be any converging subsequence with limit   $\chi$ in $L^2_{loc}(\R^d\times \R)$. Since  $a$ and $D\chi^\ep$ are stationary in an ergodic environment, $a^\ep(D\chi^\ep+p)$ converges weakly to a constant. Thus, in view of \eqref{eq.xiep},   
$\chi$ solves $\partial_t   \chi= 0 \ \ \text{in} \ \ \R^d\times \R$. 
Dividing \eqref{lem.convconv} and letting $T\to 0$ yields that 
$ \chi(\cdot,0)=0.$  
\smallskip

Therefore $ \chi\equiv 0$,  and, hence,  $\chi^{\ep_{n}}\to 0$ in $L^2_{loc}(\R^d\times \R)$.

\end{proof}

\subsection{Homogenization}\label{subsec.homogen}

We now turn to the homogenization of \eqref{pde10}. 
The  aim is to show that the family   
$(u^\ep)_{\ep>0}$ converges  to the solution $u$ of the homogenous equation 
\be\label{eq.AHlim}
\partial_t \ol u -{\rm div}(\overline a(D\ol u)) =f(x,t) \ \  {\rm in} \ \  \R^d\times (0,T) \quad 
\ol u(\cdot,0)=u_0  \ \  {\rm in} \ \ \R^d,
\ee
where $\bar a:\R^d\to \R$ is defined by \eqref{def.barabara}, see below for a precise statement.

For the statement and the proof of the result we will use again the weight 
\begin{equation}\label{def.rhothetaBIS}
\rho_\theta(x):= \exp\{ -\theta (1+|x|^2)^{1/2}\}
\end{equation}
and we will work in the weighted spaces  $L^2_{\rho_\theta}=L^2_{\rho_\theta}(\R^d)$, $H^1_{\rho_\theta}=H^1_{\rho_\theta}(\R^d)$, etc... 
\smallskip

The homogenization result is stated next. 

\begin{thm}\label{thm.homo} Assume \eqref{omega}, \eqref{T}, 
and \eqref{a2} and let $\ol a:\R^d\to\R^d$  be the monotone and Lipschitz continuous vector field defined by  \eqref{def.barabara}. Then, 
 for every $T>0$, $u_0\in L^2(\R^d)$ and $f \in L^2(R^d\times (0,T))$, if $u^\ep$ and $\ol u$ solve respectively \eqref{pde10} and  \eqref{eq.AHlim}, then, 
 $\P-$a.s. and in expectation,  $u^\ep(\cdot,t) \to \ol u(\cdot,t)$  in $L^2_{\rho_\theta}(\R^d\times (0,T))$ for any $\theta>0$. 
\end{thm}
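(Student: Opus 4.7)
I would use the classical monotone / Minty--Tartar method adapted to space-time random media, with Theorem~\ref{thm.corrector} providing the oscillating test functions. The three stages are: uniform energy bounds and compactness for $(u^\ep)$; a Minty inequality obtained by plugging the $\ep$-scaled corrector at constant gradients against the equation for $u^\ep$; and a piecewise-constant-gradient approximation to identify the effective flux, which will be the main obstacle.

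\emph{Compactness.} Testing \eqref{pde10} against $u^\ep\rho_\theta$, using \eqref{T} and \eqref{a2} and handling the weight derivatives as in Lemma~\ref{lem.2.2}, I get
\[
\sup_{t\in[0,T]}\!\int_{\R^d}(u^\ep)^2\rho_\theta\,dx+\int_0^T\!\!\int_{\R^d}|Du^\ep|^2\rho_\theta\,dxdt\lesssim_\theta\|u_0\|_{L^2}^2+\|f\|_{L^2(\R^d\times(0,T))}^2,
\]
and, from \eqref{pde10} itself, a uniform bound on $\partial_tu^\ep$ in $L^2(0,T;H^{-1}_{\rho_\theta})$. Aubin--Lions then yields, along a subsequence $\ep_n\to 0$ and $\P$-a.s., $u^{\ep_n}\to u$ strongly and $Du^{\ep_n}\rightharpoonup Du$ weakly in $L^2_{\rho_\theta}(\R^d\times(0,T))$, while $a^{\ep_n}(Du^{\ep_n},\cdot,\cdot,\omega)\rightharpoonup\bar\xi$ weakly in the same space; passing to the limit in \eqref{pde10} gives $\partial_tu-\div\bar\xi=f$ with $u(\cdot,0)=u_0$. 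Everything reduces to identifying $\bar\xi=\bar a(Du)$.

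\emph{Minty inequality via correctors.} For a fixed $p\in\R^d$, set $\chi^{\ep,p}(x,t,\omega)=\ep\chi^p(x/\ep,t/\ep^2,\omega)$. By Theorem~\ref{thm.corrector} one has $\chi^{\ep,p}\to 0$ in $L^2_{\text{loc}}$ and $D\chi^{\ep,p}\rightharpoonup 0$, while stationarity plus the ergodic theorem applied to $a(p+D\chi^p,\cdot,\cdot,\omega)$ yields $a^\ep(p+D\chi^{\ep,p},\cdot,\cdot,\omega)\rightharpoonup\bar a(p)$, by the very definition \eqref{def.barabara}. For any $\varphi\in C_c^\infty(\R^d\times(0,T);[0,\infty))$, the monotonicity of $a$ gives
\[
\int_0^T\!\!\int_{\R^d}\bigl(a^\ep(Du^\ep)-a^\ep(p+D\chi^{\ep,p})\bigr)\cdot\bigl(Du^\ep-p-D\chi^{\ep,p}\bigr)\varphi\,dxdt\ge 0.
\]
I would expand the product, then use the equations for $u^\ep$ and for $\chi^{\ep,p}$ to integrate by parts, transferring all derivatives onto $\varphi$; this is the compensated-compactness / div--curl mechanism already used in Lemma~\ref{lem.Ewxi=0}. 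The strong $L^2_{\text{loc}}$ convergences of $u^\ep$ and $\chi^{\ep,p}$ together with the weak convergences above are enough to pass to the limit, giving, for every $p\in\R^d$ and every such $\varphi$,
\[
\int_0^T\!\!\int_{\R^d}\bigl(\bar\xi-\bar a(p)\bigr)\cdot\bigl(Du-p\bigr)\varphi\,dxdt\ge 0.
\]

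\emph{Identification of $\bar\xi$ and conclusion (main obstacle).} The natural final step would be to substitute $p=Du(x,t)$ and apply Minty's trick $p=Du+hq$, $h\to 0^\pm$, to obtain $\bar\xi=\bar a(Du)$. This is illegal as stated, because Lemma~\ref{m10} only provides the Lipschitz bound \eqref{eq.reguwp} for $p\mapsto\chi^p$ in ${\bf L^2}$ with no pointwise regularity, and the authors explicitly announce this as the reason to introduce ``piecewise gradient correctors''. I would therefore approximate $Du$ by a simple function $\pi_\delta=\sum_j p_j\mathbf{1}_{E_j}$ with $\|\pi_\delta-Du\|_{L^2_{\rho_\theta}}\to 0$ as $\delta\to 0$, assemble the composite corrector $\chi^{\ep,\delta}=\sum_j\chi^{\ep,p_j}\mathbf{1}_{E_j}$, and rerun the Minty computation piece by piece; the estimate \eqref{eq.reguwp} controls both the error from replacing $Du$ by $\pi_\delta$ and the boundary contributions across the $\partial E_j$, letting one send first $\ep\to 0$ and then $\delta\to 0$. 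Upgrading the inequality to $p$ replaced by $D\phi(x,t)$ for smooth $\phi$, choosing $\phi$ so that $D\phi\approx Du+hq$ on the support of $\varphi$, and sending $h\to 0^\pm$ yields $\int(\bar\xi-\bar a(Du))\cdot q\,\varphi\,dxdt=0$ for every $q$, hence $\bar\xi=\bar a(Du)$ a.e. Uniqueness for \eqref{eq.AHlim} (from the monotonicity of $\bar a$) then forces $u=\bar u$, so the whole family converges $\P$-a.s.\ on a single full-measure set on which the ergodic theorem applies; convergence in expectation follows by Vitali, since the uniform energy bound renders $\|u^\ep-\bar u\|_{L^2_{\rho_\theta}}^2$ uniformly integrable in $\omega$.
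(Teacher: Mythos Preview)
Your outline is essentially the paper's proof: weighted energy estimates plus Aubin--Lions for compactness, the Minty/monotonicity inequality against the rescaled corrector, a piecewise-constant localization of the gradient to circumvent the lack of regularity of $p\mapsto\chi^p$, and the div--curl (compensated compactness) computation using both the equation for $u^\ep$ and the corrector equation. Two details deserve correction.

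\emph{The energy bound is random, not deterministic.} Your displayed estimate $\lesssim_\theta\|u_0\|_{L^2}^2+\|f\|_{L^2}^2$ is not available under \eqref{a2}: testing \eqref{pde10} with $u^\ep\rho_\theta$ produces the term $\int_0^T\!\int_{\R^d}|a(0,x/\ep,t/\ep^2,\omega)|^2\rho_\theta\,dxdt$, which is random. The paper records this as $C_\theta^\ep(\omega)$ in Lemma~\ref{lem.estiuep} and uses the ergodic theorem to show it converges $\P$-a.s.\ and in $L^1(\Omega)$. This is what actually gives uniform integrability for the convergence-in-expectation statement; your appeal to Vitali from a deterministic bound would be circular.

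\emph{Which gradient is approximated, and how the boundary terms are handled.} The paper does not approximate $Du$ by a simple function. It fixes a smooth test function $\phi\in C^\infty_c$, approximates $D\phi$ by piecewise constants $D\tilde\phi=\sum_kp_k^\delta\mathbf{1}_{\hat Q_k}$ on small cubes, and builds $D\tilde\chi^\ep=\sum_kD\chi^\ep(\cdot\,;p_k^\delta)\mathbf{1}_{\hat Q_k}$; see \eqref{m40}. Because $\phi$ is smooth, $D\tilde\phi\to D\phi$ \emph{uniformly} as $\delta\to0$, which makes the $\delta$-limit in \eqref{aoiuscn} trivial. Only afterwards is the Minty trick performed, via $\phi=u^\sigma+s\psi$ with $u^\sigma$ a smooth compactly supported approximation of $u$ and the auxiliary limit \eqref{limlim3}. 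Your proposal to approximate $Du$ directly by simple functions and then ``upgrade to $D\phi$'' is redundant and, in the $L^2$-only regime, more delicate. Finally, the boundary contributions across $\partial\hat Q_k$ are not controlled by \eqref{eq.reguwp}; they are absorbed by inserting a cutoff $\psi\in C^\infty_c(\hat Q_k)$ and using the ergodic limit \eqref{eq.cvDchiepBIS} to make $\int_{\hat Q_k\setminus\hat Q_{k,n}}|D\chi^\ep|^2$ small on thin strips (this is \eqref{alsrdgc} and the argument following \eqref{kujqhdbjsfnx}).
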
 

The argument is long. To help the reader we split it in several parts (subsubsections).  In the first  subsubsection we prove a refined energy estimate for solutions of \eqref{pde10}. 
Then, in subsubsection~\ref{def.Omega0E} we identify  $\Omega_0\subset \Omega$  of full measure where the homogenization takes place. In subsubsection~\ref{subsubseq} we extract a subsequence $\ep_n\to 0$ along which $u^{\ep_n}$ has a limit.  To show that this limit satisfies the effective PDE, we construct a special  test function in subsubsection~\ref{subsubsectionTestF}. Theorem~\ref{thm.homo} is proved in subsubsection~\ref{thm.homo1}. The last three subsubsections are devoted to the proof of some technical parts used in subsubsection~\ref{thm.homo1}.

\subsubsection{Preliminary estimates}\label{subsubsec.preli}

A solution to \eqref{pde10} is a  measurable map $u^\ep:\R^d\times [0,T]\times \Omega\to \R$ such that, $\P-$a.s., $u^\ep(\cdot, \cdot, \omega) \in L^2([0,T], H^1_{\rho_\theta})\cap C^0([0,T], L^2_{\rho_\theta})$ which   satisfies the equation in the sense of distributions. Since, $\P-$a.s.,   $a(0,\cdot, \cdot,\omega) \in L^2_{loc}(\R^d\times (0,T])$,  $u^\ep(\cdot, \cdot, \omega)$ exists and is unique. 
\smallskip

In the next lemma we sharpen the standard energy estimate for solutions of \eqref{pde10}. 

\begin{lem}\label{lem.estiuep} Assume \eqref{omega}, \eqref{T},
and  \eqref{a2},  $u_0\in L^2(\R^d)$ and $f \in L^2(R^d\times (0,T))$. There exists  $C_\theta^\ep(\omega)>0$, which is $\P-$a.s.  finite,  converges, as  $\ep\to 0$, in   $L^1(\Omega)$, and   depends  on $\theta$, $T$, $\|f\|_2$ and the  monotonicity and Lipschitz constants of $a$ such that 
\begin{equation}\label{eq.Comega}
\sup_{t\in [0,T]} \|u^\ep(\cdot, t)\|_{L^2_{\rho_\theta}}^2+ \int_0^T\|Du^\ep(\cdot, t)\|_{L^2_{\rho_\theta}}^2 dt +
\int_0^T \|\partial_t u^\ep\|_{H^{-1}_{\rho_\theta}}^2 \leq C^\ep_\theta(\omega).
\end{equation}
\end{lem}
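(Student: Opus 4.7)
The proof is a weighted energy estimate adapted to the exponential weight $\rho_\theta$, together with an observation that the $\omega$-dependent constant that appears arises only through the stationary random field $|a(0,\cdot,\cdot,\omega)|^2$ integrated against $\rho_\theta$, so that its convergence in $L^1(\Omega)$ reduces to a stochastic-averaging statement. I begin by taking $u^\ep\rho_\theta$ as a test function in the distributional formulation of \eqref{pde10}; this is legitimate up to a standard approximation by compactly supported cut-offs, the tails being controlled by the exponential decay of $\rho_\theta$.

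Carrying out the integration by parts and using that $|D\rho_\theta|\leq \theta\rho_\theta$, I obtain, for every $t\in [0,T]$,
\[
\tfrac12\|u^\ep(\cdot,t)\|_{L^2_{\rho_\theta}}^2
+\int_0^t\!\!\int_{\R^d} a(Du^\ep,x/\ep,s/\ep^2,\omega)\cdot Du^\ep\,\rho_\theta
=\tfrac12\|u_0\|_{L^2_{\rho_\theta}}^2
-\int_0^t\!\!\int_{\R^d} u^\ep\,a(Du^\ep)\cdot D\rho_\theta
+\int_0^t\!\!\int_{\R^d} fu^\ep\rho_\theta.
\]
The monotonicity assumption \eqref{monotone}, written as $a(p)\cdot p\geq C_0^{-1}|p|^2-|a(0)||p|$, yields a coercive term $C_0^{-1}\int|Du^\ep|^2\rho_\theta$; the Lipschitz bound \eqref{Lip} gives $|a(Du^\ep)|\leq |a(0,x/\ep,t/\ep^2,\omega)|+C_0|Du^\ep|$, which together with $|D\rho_\theta|\leq\theta\rho_\theta$ allows Young's inequality to absorb the $|Du^\ep|^2\rho_\theta$ contribution from the drift-on-weight term into the coercive one, at the price of $C(\theta)\|u^\ep\|_{L^2_{\rho_\theta}}^2$ and $C\int|a(0,x/\ep,t/\ep^2,\omega)|^2\rho_\theta$. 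The forcing term is handled analogously. A Gronwall argument in $t$ then furnishes
\[
\sup_{t\in[0,T]}\|u^\ep(\cdot,t)\|_{L^2_{\rho_\theta}}^2
+\int_0^T\|Du^\ep\|_{L^2_{\rho_\theta}}^2\,dt
\lesssim_{\theta,T,C_0}\;\|u_0\|_{L^2_{\rho_\theta}}^2+\|f\|_{L^2_{\rho_\theta}}^2+I^\ep(\omega),
\]
with $I^\ep(\omega):=\int_0^T\!\!\int_{\R^d}|a(0,x/\ep,t/\ep^2,\omega)|^2\rho_\theta(x)\,dx\,dt$. The $H^{-1}_{\rho_\theta}$-bound on $\partial_t u^\ep$ is then read off directly from the equation, since $\|a(Du^\ep,\cdot)\|_{L^2_{\rho_\theta}}\leq \|a(0,x/\ep,t/\ep^2,\omega)\|_{L^2_{\rho_\theta}}+C_0\|Du^\ep\|_{L^2_{\rho_\theta}}$. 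This produces the desired $C^\ep_\theta(\omega)$ as an explicit affine combination of $\|u_0\|_{L^2_{\rho_\theta}}^2$, $\|f\|_{L^2_{\rho_\theta}}^2$, and $I^\ep(\omega)$.

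The one delicate point is the $L^1(\Omega)$-convergence of $I^\ep(\omega)$. Using stationarity of the random field $g(y,\tau,\omega):=|a(0,y,\tau,\omega)|^2$ with respect to the $\Z^d\times\R$-action, together with \eqref{Hypa3} which says $\E[\int_{\tilde Q_1}g]<+\infty$, one applies the Birkhoff--Wiener ergodic theorem in the form: for any nonnegative function $w\in L^1(\R^{d+1})$ with $w(x,t)=\rho_\theta(x)\mathbf 1_{[0,T]}(t)$,
\[
\int_{\R^{d+1}} w(x,t)\,g(x/\ep,t/\ep^2,\omega)\,dx\,dt\;\xrightarrow[\ep\to 0]{L^1(\Omega)}\;
\Big(\E\Big[\int_{\tilde Q_1}g\Big]\Big)\int_{\R^{d+1}} w(x,t)\,dx\,dt,
\]
which gives $I^\ep\to T\,\E[\int_{\tilde Q_1}|a(0)|^2]\int_{\R^d}\rho_\theta$ in $L^1(\Omega)$. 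This last step is the main technical point; it is where the hypothesis \eqref{Hypa3} and the ergodicity of the environment enter decisively, and where the exponential decay of the weight is needed to reduce to the standard ergodic averaging on growing domains (approximating $\rho_\theta$ by a monotone sum of cut-off indicators and passing to the limit by monotone convergence). Everything else in the argument is the standard weighted parabolic energy estimate.
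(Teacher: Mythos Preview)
Your proof is correct and follows essentially the same approach as the paper: a weighted energy estimate with test function $u^\ep\rho_\theta$, the monotonicity/Lipschitz bounds on $a$ together with $|D\rho_\theta|\leq\theta\rho_\theta$ and Young's inequality, Gronwall, then reading off the $H^{-1}_{\rho_\theta}$ bound on $\partial_t u^\ep$ directly from the equation, with the random constant identified as $I^\ep(\omega)=\int_0^T\!\int_{\R^d}|a(0,x/\ep,t/\ep^2,\omega)|^2\rho_\theta\,dx\,dt$ and its $L^1(\Omega)$-convergence obtained via the ergodic theorem. Your justification of the ergodic limit (approximating $\rho_\theta$ by indicators and using monotone convergence) is slightly more explicit than the paper's one-line invocation, but the argument is the same.
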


\begin{proof}
%
Throughout the proof, to simplify the notation, in place  of   $a(Du^\ep, \dfrac{x}{\ep}, \dfrac{t}{\ep^2},\omega)$, we write $a^\ep(Du^\ep)$. 
\smallskip

It is immediate that,  for a.e. $t\in (0,T]$, $u^\ep$ satisfies  the standard energy inequality
\begin{align*}
&\int_{\R^d} \frac12 u^\ep(t)^2\rho_\theta -\int_{\R^d} \frac12 u_0^2\rho_\theta = \int_0^t \int_{\R^d} - a^\ep(Du^\ep) \cdot (Du^\ep \rho_\theta+
u^\ep D\rho_\theta) + fu^\ep \rho_\theta \\
& \qquad \leq \int_0^t \int_{\R^d}  (-C_0^{-1} |Du^\ep|^2  + |a^\ep(0)| |Du^\ep|+ \theta |u^\ep| (|a^\ep(0)|+ C_0|Du^\ep| )+|f| |u^\ep|) \rho_\theta \\
& \qquad \leq \int_0^t \int_{\R^d} (-\frac{1}{2C_0} |Du^\ep|^2  +  C(|a^\ep(0)|^2+  |u^\ep|^2+ |f|^2)\rho_\theta\\
& \qquad \leq \int_0^t \int_{\R^d} (-\frac{1}{2C_0} |Du^\ep|^2  +  C|u^\ep|^2)\rho_\theta + C_\theta(\tilde C_\theta^\ep(\omega)+1), 
 \end{align*}
 where $C_\theta$ is a constant which depends only on $\theta$, $T$, $\|f\|_2$ and $C_0$ in \eqref{T}  (and might change from line to line) and 
 $$
\tilde C_\theta^\ep(\omega)= \int_0^T \int_{\R^d}|a^\ep(0,x,t)|^2 \rho_\theta(x)dx. 
 $$  
 It then follows from Gronwall's Lemma that
 $$
 \sup_{t\in [0,T]} \|u^\ep(\cdot, t)\|_{L^2_{\rho_\theta}}^2+ \int_0^T\|Du^\ep(\cdot, t)\|_{L^2_{\rho_\theta}}^2 dt \leq C_\theta(1+\tilde C^\ep_\theta(\omega)).
$$
 To estimate 
 $\partial_t u^\ep$,  we use  $\phi \rho_\theta$ with  $\phi\in C^\infty_c(\R^d\times [0,T])$ as a test function in \eqref{pde10}  and get  
\begin{align*}
 \int_0^T \lg \partial_t u^\ep, \phi \rg_{H^{-1}_{\rho_\theta}(\R^d), H^1_{\rho_\theta}(\R^d)} 
& = \int_0^T \int_{\R^d} -a^\ep(Du^\ep)\cdot D\phi \rho_\theta -a^\ep(Du^\ep)\cdot D\rho_\theta  \phi +f \phi\rho_\theta \\[1.5mm]
& \leq C_\theta ( \|a^\ep(0)\|_{L^2_{\rho_\theta}} + C_0\|u^\ep\|_{L^2(H^1_{\rho_\theta})} +\|f\|_{L^2_{\rho_\theta}}) \|\phi\|_{L^2(H^1_{\rho_\theta})},
\end{align*}
 and, in view of the previous estimate on $u^\ep$, 
\begin{align*}
 \int_0^T \|\partial_t u^\ep\|_{H^{-1}_{\rho_\theta}}^2 dt \leq C_\theta ( \tilde C_\theta^\ep +1).
\end{align*}
To complete the proof, we note that the ergodic Theorem implies that  $\tilde C_\theta^\ep$ converges, $\P-$a.s. and in $L^1(\Omega)$, to 
 $$
 \E\left[\int_0^T\int_{\R^d} |a(0,x,t)|^2 \rho_\theta(x)
dxdt\right]<+\infty.
$$
\end{proof}

\subsubsection{The identification of $\Omega_0$.} \label{def.Omega0E}
 Let $\chi^\ep(x,t;p,\omega)=\ep \chi (\frac{x}{\ep}, \frac{t}{\ep^2};p,\omega)$, where $\chi (y, \tau ;p,\omega)$ is the corrector found in 
 Theorem \ref{thm.corrector}. We know from Theorem \ref{thm.corrector} that $\chi^\ep$ solves in the sense of distributions the corrector equation 
$$
\partial_t \chi^\ep -{\rm div}(a(p+D\chi^\ep, \dfrac{x}{\ep}, \dfrac{t}{\ep^2}, \omega))=0\ \ \text{in} \ \ \R^d\times \R,
$$
and satisfies 
\be\label{cvchiep}
\lim_{\ep \to 0} \int_{\tilde Q_R}|\chi^\ep|^2 = 0\ \  \P-{\rm a.s.}.
\ee
In addition, since,  for each $p\in \R^d$,   $a(p+ D\chi, \cdot,\cdot,\cdot) \in {\bf L^2}$ and stationary,  the ergodic theorem yields,  for any cube $\tilde Q$ and any $g\in L^2(\tilde Q, \R^d)$ and  $\P-$a.s., 
\begin{equation}\label{eq.cvDchiep}
\int_{\tilde Q} g(x,t) \cdot a(p+ D\chi^\ep(x,t; p),  \dfrac{x}{\ep}, \dfrac{t}{\ep^2}, \omega)dxdt \underset{\ep\to 0}\to \int_{\tilde  Q} g(x,t)\cdot \bar a(p)dxdt.
\end{equation}
Similarly,  in view of the stationarity of $D\chi$,  for any $g\in L^2(\tilde Q)$ and $\P-$a.s., 
\begin{equation}\label{eq.cvDchiepBIS}
\int_{\tilde Q} g(x,t) |D\chi^\ep(x,t; p)|^2dxdt \underset{\ep\to 0}\to \E\left[  \int_{\tilde  Q} g(x,t) |D\chi(x,t;p)|^2\right].
\end{equation} 
Finally,  Lemma \ref{lem.Ewxi=0} yields
\begin{equation}\label{eq.cvDchiepTER}
\int_{\tilde Q} g(x,t) a^\ep(p+D\chi^\ep(x,t),x,t)\cdot D\chi^\ep(x,t; p) dxdt \underset{\ep\to 0}\to 0. 
\end{equation} 

Hence, given a countable family ${\mathcal E}$ dense in $\R^d$ and the (countable) family ${\mathcal Q}$ of  cubes with rational coordinates, we can find using a diagonal argument a set $\Omega_1$ of full probability  such that, for any $\omega\in \Omega_1$,  any $p\in {\mathcal E}$ and  $\tilde D\in {\mathcal Q}$, \eqref{cvchiep}, \eqref{eq.cvDchiep}, \eqref{eq.cvDchiepBIS} and \eqref{eq.cvDchiepTER} hold. 
\smallskip

Let  $\Omega_2$ be the full measure subset of $\Omega$ such that, for any $\omega\in \Omega_2$, the limit of the constant $C_\theta^\ep(\omega)$ in \eqref{eq.Comega} exists and is finite for any (rational) $\theta>0$ and such that, for $\ep_0=\ep_0(\omega)>0$ small enough and every $R>0$, 
\begin{equation} \label{eq.choixomega3}
\sup_{\ep\in (0,\ep_0)} \int_{\tilde Q_R} ( |a^\ep(0,x,t)|^2 + |D\chi^\ep(x,t;p)|^2 )dxdt <+\infty. 
\end{equation} 

The full measure subset of $\Omega$ in which homogenization takes place is $\Omega_0=\Omega_1 \cap \Omega_2$. Heretofore,  
we always work with $\omega\in \Omega_0$.

\subsubsection{Extracting a subsequence}\label{subsubseq} Fix $\omega \in \Omega_0$. 
In view of  \eqref{eq.Comega}, we know that the family  $(u^\ep)_{\ep>0}$ is compact in $L^2_{loc}(\R^{d+1})$. 
\smallskip

Let $(u^{\ep_{n}})_{n\in \N}$ be a converging sequence  with limit $u$. Then, for any $\theta>0$,  
\be\label{alkejzred}
\begin{cases}
 u^{\ep_{n}} \to u \; {\rm in }\; L^2_{\rho_\theta}(\R^d\times (0,T)),\ \  Du^{\ep_{n}} \rightharpoonup Du\;  {\rm in }\; L^2_{\rho_\theta}(\R^d\times (0,T)), \ \ \text{and}\\[1.5mm]  
\qquad \qquad \qquad  a^{\ep_{n}}(Du^{\ep_{n}}, \dfrac{x}{\ep_{n}}, \dfrac{t}{\ep^2_{n}},\omega) \rightharpoonup \xi\; 
{\rm in }\; L^2_{\rho_\theta}(\R^d\times (0,T)). 
\end{cases}
\ee
The aim is to prove that $u$ is the unique solution to \eqref{eq.AHlim}, which will then yield the a.s. convergence of $u^\ep$ to $u$. 
\smallskip

Heretofore, we work along this  particular subsequence $\ep_{n}$, which we denote by $\ep$ to simplify the notation. Note that, in view of \eqref{pde10}, we have 
\be\label{laerkjnsrdc}
\partial_t u -{\rm div} (\xi) = f(x,t) \ \  {\rm in} \ \ \R^d\times (0,T) \quad u(\cdot,0)=u_0 \ \  {\rm in} \ \  \R^d.
\ee
In addition, in view of   \eqref{eq.Comega},  for any $\theta>0$, we have 
\begin{equation}\label{eq.Comega2BIS}
\sup_{t\in [0,T]} \|u(\cdot, t)\|_{L^2_{\rho_\theta}}^2+ \int_0^T\|Du(\cdot, t)\|_{L^2_{\rho_\theta}}^2 dt +
\int_0^T \|\partial_t u\|_{H^{-1}_{\rho_\theta}}^2 \leq C_\theta(\omega),
\end{equation}
where $C_\theta(\omega)= \sup_{\ep\in (0,\ep_0]} C_\theta^\ep(\omega)$ is finite,  for $\ep_0$ small, since by the  construction of $\Omega_0$, $C_\theta^\ep(\omega)$ has a limit  as $\ep\to 0$. Similarly to  the construction of the corrector, we need to prove that we can replace $\xi$ by $\bar a(Du)$ in \eqref{laerkjnsrdc}.  

\subsubsection{The test functions}\label{subsubsectionTestF} Following the usual approach to prove homogenization for divergence form elliptic equations, given a test function $\phi\in C^\infty_c(\R^d\times [0,T))$, we need to consider, for each $\ep>0$,  the corrector $\chi^\ep(x,t,\omega)=\chi(\dfrac{x}{\ep}, \dfrac{t}{\ep^2}, D\phi (x,t),\omega)$ and work with $D\chi^\ep$ . The dependence on $D\phi$ creates technical problems since we do not have enough information about the regularity of the map  $p\to \chi(\cdot,\cdot,p,\omega)$.
\smallskip

To circumvent this difficulty, we  introduce a localization argument for  the gradient of the corrector, which is  based on a piecewise constant approximation of  $D\phi$.
\smallskip

Fix $\delta\in(0,1)$ and consider a locally finite family $(\hat Q_k )_{k\in \N}$  of disjoint cubes $\hat Q_k=Q_{R_k}(x_k)\times (t_k-T_k,t_k+T_k)$ in $\mathcal Q$ with  $T_k+R_k\leq \delta$ 
covering  $\R^d\times [0,T]$ up to a set of $0$ Lebesgue measure.  
\smallskip 

Let 
$$
p_k = \dashint_{\tilde Q_k} D\phi(x,t)dxdt,
$$
and, for each $k$, choose $p_k^\delta$ in  the countable family ${\mathcal E}$ defined in subsection \ref{def.Omega0E} and is such that $|p_k-p^\delta_k|\leq \delta$.
\smallskip

The localizations of $D\phi$ and $D\chi^\ep$ are
\be\label{m40}
D\tilde \phi (x,t) =\sum_k p_k^\delta {\bf 1}_{\hat Q_k} \ \  {and} \ \ D\tilde \chi^\ep(x,t,\omega)= \sum_{k} D\chi^\ep  (x,t; p_k^\delta).
\ee

Note that above we abused notation, since neither $D\tilde \phi$ nor $D\tilde \chi^\ep$ are gradients. We use, however,  the gradient symbol in order to stress the fact that they are respectively close  to $D\phi$ and $D\chi^\ep$. Indeed, we note, for later use, that $D\tilde \phi$ and  $D\tilde \chi^\ep$depend on $\delta$ and that $D\tilde \phi$ converges, as $\delta \to 0^+$, uniformly to $D\phi$.
 
%
\smallskip

Finally, we fix a smooth nonincreasing function $\zeta:[0,T]\to \R$ such that   $\zeta(0)=1$ and  $\zeta(1)=0$. 

\subsubsection{The proof of Theorem \ref{thm.homo}}\label{thm.homo1}  We prove that  $\xi=\ol a(D\ol u)$ in \eqref{laerkjnsrdc}. 
\smallskip

We write for simplicity below $a^\ep(p,x,t)$ for $a(p,\dfrac{x}{\ep},\dfrac{t}{\ep^2}, \omega)$.
\smallskip

The monotonicity of $a$ gives 
\begin{align*}
 \int_0^T  \int_{\R^d} \Big(a^\ep\big(Du^\ep(x,t),x,t\big)-a^\ep\big (D\tilde  \phi(x,t)+D\tilde \chi^\ep(x,t), x,t\big )\Big)\qquad\qquad\\
 \qquad \qquad \cdot (Du^\ep(x,t)-D\tilde  \phi(x,t)-D\tilde \chi^\ep(x,t))\rho_\theta(x)\zeta(t) dxdt \geq 0.
\end{align*}
Multiplying \eqref{pde10}  by $u^\ep\rho_\theta \zeta$ and integrating in space and time we find 
\begin{align*}
&- \int_{\R^d} \frac{u_0^2(x)}{2}  \rho_\theta(x)dx  - \int_0^T \int_{\R^d}  \frac{(u^\ep(x,t))^2}{2}  \rho_\theta(x)\zeta' (t)dxdt\\
& \qquad  +\int_0^T\int_{\R^d} a^\ep(Du^\ep(x,t),x,t)\cdot Du^\ep(x,t) \rho_\theta(x)\zeta(t)dxdt  \\
& \qquad  +\int_0^T\int_{\R^d} u^\ep a^\ep(Du^\ep(x,t),x,t)\cdot D \rho_\theta\zeta dx dt 
= \int_0^T\int_{\R^d}  f(x,t) u^\ep(x,t)\rho_\theta(x)\zeta(t).
\end{align*}

Subtracting the last two expressions we obtain  
\begin{align}\label{eq.lhjeznrgdgf}
& \int_{\R^d} \frac{u_0^2}{2}  \rho_\theta   + \int_0^T \int_{\R^d}  \frac{(u^\ep)^2}{2}  \rho_\theta\zeta' + \int_0^T \int_{\R^d} \Bigl( 
 -a^\ep(Du^\ep) \cdot (D\tilde \phi+ D\tilde \chi^\ep) .\notag \\   
\\
 &  -a^\ep (D\tilde \phi+D\tilde \chi^\ep)\cdot (Du^\ep-D\tilde  \phi-D\tilde \chi^\ep)+ f u^\ep\Bigr) \rho_\theta\zeta 
 -  \int_0^T \int_{\R^d} u^\ep a^\ep (Du^\ep) \cdot D \rho_\theta \zeta  \geq 0.\notag
\end{align}
To let $\ep \to 0$ in the above inequality, we 
first note that, in view of  \eqref{alkejzred}, 
\begin{align*}
& \lim_{\ep\to 0} \int_{\R^d} \frac{u_0^2}{2}  \rho_\theta  + \int_0^T \int_{\R^d}  \frac{(u^\ep)^2}{2}  \rho_\theta\zeta' + \int_0^T \int_{\R^d} \Bigl( 
 -a^\ep(Du^\ep) \cdot D\tilde \phi+ f u^\ep\Bigr) \rho_\theta\zeta  \\
 & \qquad \qquad   -  \int_0^T \int_{\R^d} u^\ep a^\ep (Du^\ep) \cdot D \rho_\theta \zeta   \\
 &\qquad = \int_{\R^d} \frac{u_0^2}{2}  \rho_\theta  + \int_0^T \int_{\R^d}  \frac{u^2}{2}  \rho_\theta\zeta' + \int_0^T \int_{\R^d} \Bigl( 
 -\xi \cdot D\tilde \phi + f u\Bigr) \rho_\theta\zeta \\
  & \qquad \qquad   -  \int_0^T \int_{\R^d} u \xi  \cdot D \rho_\theta \zeta .
\end{align*}
We claim that 
\begin{align}
& \lim_{\ep\to 0}  \int_0^T\int_{\R^d} a^\ep (D\tilde \phi(x,t)+D\tilde \chi^\ep(x,t), x,t) \cdot (D\tilde \phi(x,t)+D\tilde\chi^\ep(x,t)) \rho_\theta(x)\zeta(t)  dxdt \notag\\
& \qquad = 
\int_0^T\int_{\R^d} \bar a(D\tilde \phi(x,t))\cdot  D\tilde \phi(x,t) \rho_\theta(x)\zeta(t) dxdt, \label{limlim0}
\end{align}
and 
\begin{align}\label{limlim1}
&\lim_{\ep\to 0} \int_0^T\int_{\R^d} a^\ep(Du^\ep(x,t),x,t)\cdot D\tilde \chi^\ep(x,t)\rho_\theta(x)\zeta(t)dxdt \\
& \qquad + \int_0^T\int_{\R^d} a^\ep(D\tilde \phi(x,t)+D\tilde \chi^\ep(x,t), x,t) \cdot Du^\ep(x,t)\rho_\theta(x)\zeta(t)  dxdt \notag\\
& \qquad = \int_0^T\int_{\R^d} \overline a(D\tilde \phi(x,t)) \cdot Du(x,t) \rho_\theta(x)\zeta(t) dxdt. \notag
\end{align}
Assuming  \eqref{limlim0} and \eqref{limlim1}, we proceed with the ongoing proof. Passing to the $\ep\to 0$ limit in \eqref{eq.lhjeznrgdgf}, we find
\begin{align*}
& \int_{\R^d} \frac{u_0^2}{2}  \rho_\theta  + \int_0^T \int_{\R^d}  \frac{u^2}{2}  \rho_\theta\zeta' + \int_0^T \int_{\R^d} \Bigl( 
 -\xi  \cdot D\tilde \phi  -\bar a(D\tilde \phi) \cdot (Du-D\tilde \phi) + f u\Bigr) \rho_\theta\zeta  \\
& \qquad  -  \int_0^T \int_{\R^d} u \xi  \cdot D \rho_\theta \zeta  \geq 0.
\end{align*}
Next we  let $\delta\to 0$. Since, $D\tilde \phi\to D\phi$ uniformly,  we obtain,
\begin{align}
& \int_{\R^d} \frac{u_0^2}{2}  \rho_\theta + \int_0^T \int_{\R^d}  \frac{u^2}{2}  \rho_\theta\zeta' + \int_0^T \int_{\R^d} \Bigl( 
 -\xi  \cdot D \phi  -\bar a(D\phi) \cdot (Du-D \phi) + f u\Bigr) \rho_\theta\zeta  \notag \\
& \qquad  -  \int_0^T \int_{\R^d} u \xi  \cdot D \rho_\theta \zeta  \geq 0.
\label{aoiuscn}
\end{align}
while  using $\phi \rho_\theta\zeta $ as a test function in  \eqref{laerkjnsrdc} yields
\begin{align*}
&0= 
- \int_{\R^d} u_0\phi(0)\rho_\theta +
\int_0^T \int_{\R^d} u(-\partial_t \phi \rho_\theta \zeta -\phi\rho_\theta \zeta') +\xi\cdot (D\phi\rho_\theta\zeta + \phi D\rho_\theta\zeta)  -f   \phi \rho_\theta\zeta . 
\end{align*}
Combining the equation above and  \eqref{aoiuscn} we get 
\begin{align}
& \int_{\R^d} (\frac{u_0^2}{2}-u_0\phi(0)) \rho_\theta  + \int_0^T \int_{\R^d}  u(\frac{u}{2}-\phi)  \rho_\theta\zeta' \notag\\
& \;  + \int_0^T \int_{\R^d} \Bigl( 
  -\bar a (D\phi) \cdot (Du-D \phi) + f (u-\phi) - u \partial_t \phi \Bigr) \rho_\theta\zeta \label{izuakzesdnfc} \\
& -  \int_0^T \int_{\R^d} (u-\phi) \xi  \cdot D \rho_\theta \zeta  \geq 0.  \notag
\end{align}
We choose $\phi= u^\sigma +s\psi$ where $s>0$, $\psi\in C^\infty_c(\R^d\times [0,T))$ and $u^\sigma$ is a smooth approximation of $u$ with compact support in $\R^d\times [0,T]$ such that, as $\sigma \to 0$, 
\[
 u^\sigma(\cdot,0)\to u_0, \ \ 
u^\sigma \to u \ \ \text{ and} \ \  Du^\sigma \to  Du  \ \ \text{in} \ \ L^2_{\rho_\theta}, \ \ \text{and} \ \ \partial_t u^\sigma \to \partial_t u \ \ \text{ in} \ \  L^2(H^{-1}_{\rho_\theta});\]
note that such an approximation is possible in view of \eqref{eq.Comega2BIS}. 
\smallskip

We prove below that 
\begin{align}\label{limlim3}
& \lim_{\sigma\to 0} \int_{\R^d} (\frac{u_0^2}{2}-u_0u^\sigma(0))\rho_\theta  + \int_0^T \int_{\R^d} u(\frac{u}{2}-u^\sigma) \rho_\theta\zeta'  - \int_0^T \int_{\R^d}  u \partial_t u^\sigma \rho_\theta\zeta =0.
\end{align}

Thus,  in the limit  $\sigma\to 0$, \eqref{izuakzesdnfc} becomes 
\begin{align*}
& -s \int_{\R^d}u_0 \psi(0) \rho_\theta -s \int_0^T \int_{\R^d}  u\psi  \rho_\theta\zeta' +
s\int_0^T \int_{\R^d} \Bigl( \bar a (Du+sD\psi) \cdot D\psi -  f \psi  -  u \partial_t \psi \Bigr) \rho_\theta\zeta \\
& \qquad  +s \int_0^T \int_{\R^d} \psi \xi  \cdot D \rho_\theta \zeta  \geq 0. \end{align*}
Then, we divide by $s$ and let $s\to 0$ to get  
\begin{align*}
&  -  \int_{\R^d}u_0 \psi(0) \rho_\theta - \int_0^T \int_{\R^d} u \psi  \rho_\theta\zeta' +
\int_0^T \int_{\R^d} \Bigl( \bar a (Du) \cdot D\psi -  f \psi - u \partial_t \psi  \Bigr) \rho_\theta\zeta \\
& \qquad  + \int_0^T \int_{\R^d} \psi \xi  \cdot D \rho_\theta \zeta  \geq 0. 
\end{align*}
Finally, letting $\zeta\to 1$ and $\theta\to 0$, so that $\zeta'\to 0$ and $\rho_\theta\to 1$  while $D\rho_\theta\to 0$ locally uniformly, we  get
\[
-\int_{\R^d}u_0 \psi(0) + \int_0^T \int_{\R^d} \Bigl( \bar a (Du) \cdot D\psi -  f \psi  - u \partial_t \psi  \Bigr)   \geq 0, 
\]
which, since $\psi$ is arbitrary, yields that  $u$ is a weak solution to \eqref{eq.AHlim} since $\psi$ is arbitrary.  
\smallskip

The proof of the $\P-$a.s. convergence of the family $(u^\ep)_{\ep>0}$ to $u$ in $L^2_{\rho_\theta}(\R^d\times [0,T])$ for any $\theta>0$ is now complete. Moreover,  in view of the estimates in \eqref{eq.Comega}, where $C_\theta^\ep$ converges in expectation, the $L^2$ convergence of $u^\ep$ to $u$ also holds in expectation. 
\smallskip

In the next subsections,  we prove  \eqref{limlim0}, \eqref{limlim1} and \eqref{limlim3} hold. 

\subsubsection{The proof of \eqref{limlim0}} The definition of $D\tilde \phi$ and $D\tilde \chi^\ep$ gives
\begin{align*}
& \int_0^T\int_{\R^d} a^\ep (D\tilde \phi(x,t)+D\tilde \chi^\ep(x,t), x,t) \cdot (D\tilde \phi(x,t)+D\tilde \chi^\ep) \rho_\theta(x)\zeta(t)  dxdt \\
& \qquad = \sum_k \int_{\hat Q_k}
a^\ep (p_k^\delta+D \chi^\ep(x,t; p_k^\delta), x,t) \cdot (p_k^\delta+D \chi^\ep(x,t; p_k^\delta))  \rho_\theta(x)\zeta(t)  dxdt.
\end{align*}
Since, in view of the choice of $p_k^\delta$ and of $\hat Q_k$, 
\eqref{eq.cvDchiep} and \eqref{eq.cvDchiepTER} hold, we get 
\begin{align*}
&\lim_{\ep\to 0} \int_0^T\int_{\R^d} a^\ep (D\tilde \phi(x,t)+D\tilde \chi^\ep(x,t), x,t) \cdot D\tilde \phi(x,t)) \rho_\theta(x)\zeta(t)  dxdt \\
& \qquad = \sum_k \int_{\hat Q_k}
\bar a(p_k^\delta) \cdot p_k^\delta \rho_\theta(x)\zeta(t)  dxdt = 
\int_0^T\int_{\R^d} \bar a(D\tilde \phi(x,t))\cdot  D\tilde \phi(x,t) \rho_\theta(x)\zeta(t) dxdt,
\end{align*}
which is  \eqref{limlim0}. 

\subsection{The proof of \eqref{limlim1}} The argument  is longer and more complicated.  
\smallskip

Using  again the piecewise structure of $D\tilde \chi^\ep$ and $D\tilde \phi$, we find  
\begin{align*}
&\int_0^T\int_{\R^d} a^\ep(Du^\ep(x,t),x,t)\cdot D\tilde \chi^\ep(x,t)\rho_\theta(x)\zeta(t)dxdt \\
& \qquad + \int_0^T\int_{\R^d} a^\ep(D\tilde \phi(x,t)+D\tilde \chi^\ep(x,t), x,t)) \cdot Du^\ep(x,t)\rho_\theta(x)\zeta(t)  dxdt\notag\\
&  = \sum_k \int_{\hat Q_k} \Bigl( a^\ep(Du^\ep(x,t),x,t)\cdot D\chi^\ep(x,t; p_k^\delta) \\
& \qquad\qquad\qquad +  a^\ep(p_k^\delta+D\chi^\ep(x,t;p_k^\delta), x,t)) \cdot Du^\ep(x,t) \Bigr)\rho_\theta(x)\zeta(t) dxdt. 
\end{align*}
Now we work separately  in each cube $\hat Q_k$. To simplify the notation, we denote by $\hat Q=Q_R(x_0)\times (t_0-T,t_0+T)$ a generic cube $\hat  Q_k$ and let $p= p_k^\delta$, $\chi=\chi(\cdot, \cdot;p)$, and  recall that 
 $R+T\leq \delta\leq 1$. 
 \smallskip
 
Note that \eqref{limlim1} follows, if we show that 
\begin{align}
\limsup_{\ep\to 0} \int_{\hat Q}& \Bigl( a^\ep(Du^\ep(x,t),x,t)\cdot D\chi^\ep(x,t) \notag\\
&+  a^\ep(p+D\chi^\ep(x,t), x,t)) 
 \cdot Du^\ep(x,t) \Bigr)\rho_\theta(x)\zeta(t) dxdt \label{ailzkejnsf} \\
& \qquad  
= \int_{\hat Q} \bar a(p) \cdot Du(x,t) \rho_\theta(x)\zeta(t) dxdt. \notag 
\end{align}

To proceed, we need to work with functions which are  compactly  supported in $\hat Q$. For this, we prove below that, for any $\delta'>0$, we can choose  $\psi\in C^\infty_c(\text{Int}(\hat Q))$ and $\ep_0>0$ such that 
\begin{align}
& \sup_{\ep\in (0,\ep_0)} \int_{\hat Q} \Bigl| \Bigl( a^\ep(Du^\ep(x,t),x,t)\cdot D\chi^\ep(x,t) \notag\\
& \qquad \qquad +  a^\ep(p+D\chi^\ep(x,t), x,t) \cdot Du^\ep(x,t)\Bigr)\rho_\theta(x)\zeta(t)\Bigr||1-\psi(x,t)| dxdt \label{alsrdgc} \\
& \qquad + \int_{\hat Q} |\bar a(p) \cdot Du(x,t) \rho_\theta(x)\zeta(t)| |1-\psi(x,t)| dxdt \leq \delta'. \notag
\end{align}
Then, we show that, if $\kappa:= \rho_\theta \zeta\psi$, then
\be \label{kujqhdbjsfnx}
 \lim_{\ep\to 0} \int_{\hat Q} \Bigl( a^\ep(Du^\ep,x,t)\cdot D\chi^\ep +  a^\ep(p+D\chi^\ep, x,t) \cdot Du^\ep \Bigr)\kappa dxdt 
=\int_{\hat Q} \bar a(p) \cdot Du \kappa dxdt. 
\ee
Once we know \eqref{kujqhdbjsfnx}, we can  combine   \eqref{ailzkejnsf} and  \eqref{alsrdgc} to get 
\begin{align*}
&\limsup_{\ep\to 0} \Bigl| \int_{\hat Q} \Bigl( a^\ep(Du^\ep(x,t),x,t)\cdot D\chi^\ep(x,t) +  a^\ep(p+D\chi^\ep(x,t), x,t)) \cdot Du^\ep(x,t) \Bigr)\rho_\theta(x)\zeta(t) dxdt \\
& \qquad \qquad - \int_{\hat Q} \bar a(p) \cdot Du(x,t) \rho_\theta(x)\zeta(t) dxdt\Bigr|\leq 2\delta', 
\end{align*}
which gives the result since $\delta'$ is arbitrary. 
\smallskip

We now prove \eqref{kujqhdbjsfnx}. Using   $\chi^\ep\kappa$ as a test function in \eqref{pde10}  $u^\ep\kappa$ as a test function in the equation satisfied by  $\chi^\ep$ we get 
$$
\int_{t_0-T}^{t_0+T} \lg \partial_t u^\ep , \chi^\ep\kappa\rg_{H^{-1},H^1} + \int_{\hat Q} a(Du^\ep)\cdot (D\chi^\ep \kappa+ D\kappa \chi^\ep)=\int_{\hat Q} f \chi^\ep \kappa.
$$
and 
$$
\int_{t_0-T}^{t_0+T} \lg \partial_t \chi^\ep , u^\ep \kappa\rg_{H^{-1},H^1} + \int_{\hat Q} a(p+D\chi^\ep)\cdot (Du^\ep  \kappa+ D\kappa u^\ep)=0,
$$
and, hence, after using an easy regularization argument, we find 
\begin{align*}
\int_{\hat Q} f \chi^\ep \kappa &= \int_{t_0-T}^{t_0+T} \lg \partial_t u^\ep , \chi^\ep\kappa\rg_{H^{-1},H^1} + \int_{t_0-T}^{t_0+T} \lg \partial_t \chi^\ep , u^\ep\kappa\rg_{H^{-1},H^1} \\
& \qquad \qquad + \int_{\hat Q} (a(Du^\ep)\cdot (D\chi^\ep \kappa+ D\kappa \chi^\ep)+ a(p+D\chi^\ep)\cdot (Du^\ep \kappa+ D\kappa u^\ep)) \\
&= -\int_{\hat Q} (u^\ep \chi^\ep) \partial_t \kappa \\
& \qquad + \int_{\hat Q} (a(Du^\ep)\cdot (D\chi^\ep \kappa+ D\kappa \chi^\ep)+ a(p+D\chi^\ep)\cdot (Du^\ep\kappa+ D\kappa u^\ep)) ,
\end{align*}
Recalling \eqref{eq.cvDchiep}, \eqref{alkejzred} and that, in view of \eqref{cvchiep}, $\chi^\ep \to 0$  in $L^2_{loc}$, we  
pass to the limit $\ep\to $ in the last equalities and get 
\begin{align*}
& \lim_{\ep}  \int_{\hat Q} (a(Du^\ep)\cdot D\chi^\ep+  a(p+D\chi^\ep)\cdot Du^\ep) \kappa + \bar a (p) D\kappa u =0
\end{align*}
An  integration  by parts then yields \eqref{kujqhdbjsfnx}.
\smallskip

To complete the proof, we show that it is possible to build $\psi$ with values in $[0,1]$  in such a way that \eqref{alsrdgc} holds. 
Indeed,   choose an increasing family of  cubes $(\hat Q_n)_{n\in \N}$  in ${\mathcal Q}$ (recall was  ${\mathcal Q}$ is defined in subsubsection \ref{def.Omega0E}) such that $|\hat Q\backslash \hat Q_n|\to 0$.   Then, given $\gamma>0$ to be chosen below,  in view of  \eqref{eq.cvDchiepBIS} and  for $n$ large enough, we have 
$$
\lim_{\ep} \int_{\hat Q\backslash \hat Q_n} |D\chi^\ep|^2 dxdt = \E\left[ \int_{\tilde Q_1}|D\chi|^2\right] |\tilde Q\backslash \tilde Q_n| \leq \gamma^2/2.
$$

Hence, there exists $\ep_0$ such that 
$$
\sup_{\ep\in (0, \ep_0]} \int_{\hat Q\backslash \hat Q_n} |D\chi^\ep|^2 dxdt\leq \gamma^2.
$$
Choose $\psi\in C^\infty_c(\text{Int}(\hat Q);[0,1])$ such that $\psi = 1$ in $\hat Q_n$. Then, for any $\ep\in (0,\ep_0]$,   
\begin{align*}
&  \int_{\hat Q} \Bigl| a^\ep(Du^\ep)\cdot D\chi^\ep \rho_\theta(x)\zeta(t)\Bigr||1-\psi(x,t)| dxdt \\
& \qquad \leq  \|\rho_\theta\zeta\|_\infty  \|a^\ep(Du^\ep)\|_{L^2(\hat Q)} \|D\chi^\ep |1-\psi|\|_{L^2(\hat Q)} \\
& \qquad \leq  \|\rho_\theta\zeta\|_\infty (\| a^\ep(0)\|_{L^2(\hat Q)}+C_0 \|Du^\ep\|_{L^2(\hat Q)}) \|D\chi^\ep |1-\psi|\|_{L^2(\hat Q)} \\
& \qquad \lesssim_\omega \|D\chi^\ep \|_{L^2(\hat Q\backslash \hat Q_n)} \lesssim_\omega \gamma;
\end{align*}
the dependence on $\omega$ is through the constants in \eqref{eq.Comega} and in \eqref{eq.choixomega3}. 
\smallskip

Treating the  other terms in \eqref{alsrdgc} similarly we obtain,  for $\gamma$ small enough, 
\begin{align*}
& \sup_{\ep\in (0,\ep_0)} \int_{\hat Q} \Bigl| \Bigl( a^\ep(Du^\ep(x,t),x,t)\cdot D\chi^\ep(x,t)  \\
& \qquad \qquad +  a^\ep(p+D\chi^\ep(x,t), x,t) \cdot Du^\ep(x,t) \Bigr)\rho_\theta(x)\zeta(t)\Bigr||1-\psi(x,t)| dxdt \notag\\
& \qquad + \int_{\hat Q} |\bar a(p) \cdot Du(x,t) \rho_\theta(x)\zeta(t)| |1-\psi(x,t)| dxdt \lesssim_\omega C\gamma\leq \delta' .\notag
\end{align*}

\subsubsection{The proof of \eqref{limlim3}} Note first that 
\begin{equation}\label{aomkesfd1}
\lim_{\sigma\to 0} \int_{\R^d} (\frac{u_0^2}{2}-u_0u^\sigma(0)) \rho_\theta  + \int_0^T \int_{\R^d} u(\frac{u}{2}-u^\sigma) \rho_\theta\zeta'  
= 
 \int_{\R^d} -\frac{u_0^2}{2}\rho_\theta  - \int_0^T \int_{\R^d} \frac{u^2}{2} \rho_\theta\zeta'.
\end{equation}
On the other hand,  the weak convergence, as $\sigma\to 0$,  of $\partial_t u^\sigma$ to $\partial_t u$ yields 
$$
\int_0^T \|\partial_t u^\sigma\|_{H^{-1}_{\rho_\theta}}^2dt\lesssim 1 . 
$$
Thus, as $\sigma\to 0$,
\begin{align*}
\left| \int_0^T \int_{\R^d} (u-u^\sigma) \partial_t u^\sigma \rho_\theta \zeta\right|& \leq
\left(\int_0^T \|\partial_t u^\sigma\|_{H^{-1}_{\rho_\theta}}^2dt\right)^{1/2} 
\left( \int_0^T  \|(u-u^\sigma)  \zeta\|_{H^1_{\rho_\theta}}^2 \right)^{1/2}\\ 
&\lesssim \left( \int_0^T  \|(u-u^\sigma) \|_{H^1_{\rho_\theta}}^2 \right)^{1/2} \to 0. 
\end{align*}
Therefore, 
\begin{align} 
& \lim_{\sigma\to 0} \int_0^T \int_{\R^d} u\partial_t u^\sigma \rho_\theta \zeta = \lim_{\sigma\to 0} \int_0^T \int_{\R^d} u^\sigma \partial_t u^\sigma \rho_\theta \zeta 
\notag \\ & \qquad
= \lim_{\sigma\to 0} - \int_{\R^d} \frac{(u^\sigma)^2(0)}{2} -  \int_0^T \int_{\R^d} \frac{(u^\sigma)^2}{2} \rho_\theta \zeta' 
=  - \int_{\R^d} \frac{u_0^2(0)}{2} -  \int_0^T \int_{\R^d} \frac{u^2}{2} \rho_\theta \zeta' . \label{aomkesfd2}
\end{align}
Combining \eqref{aomkesfd1} and \eqref{aomkesfd2} gives \eqref{limlim3}.



\section{The Homogenization of \eqref{FS}}\label{sec.homoFS}

We use the results of the two previous sections to study the behavior, as $\ep\to $, of \eqref{FS}.
\smallskip

We begin with the assumptions. As far the $(B^k)_{k\in \Z^d}$ and $A$ are concerned we assume \eqref{B} and \eqref{A}. 
\smallskip

We also assume 
\be\label{omega1}
\begin{cases}
 ( \Omega_1,  {\mathcal F}_1,  \P_1) \text{is a probability space  endowed with an ergodic} \\[1.2mm]
\text{ measure-preserving group of transformations  $\tau:\Z^d\times \R\times \Omega_1\to \Omega_1$,}
 \end{cases}
 \ee
and 
\be\label{F}
\begin{cases}
\mathcal A:\R^d\times\R^d\times \R \times \Omega_1\to \R^d \ \text{is a smooth and stationary in $( \Omega_1,  {\mathcal F}_1,  \P_1)$}\\[1.2mm]
\text{vector field, which is strongly monotone and Lipschitz continuous}\\[1.2mm]
\text{in the first variable, uniformly with respect to the other variables;}
\end{cases}
\ee
note that the family $(B^k)_{k\in \Z^d}$ and the vector field  $\mathcal A$ are defined in different probability spaces.
\smallskip

%
%
Finally, for  the random environment we assume that 
\be\label{omega2}
(\Omega, {\mathcal F}, \P) \ \text{is  the product probability space of $(\Omega_0,\mathcal F_0,\P_0)$ and $( \Omega_1,  {\mathcal F}_1,  \P_1)$,}
\ee
that is, $\Omega=\Omega_0\times \Omega_1, {\mathcal F}= {\mathcal F}_0\otimes {\mathcal F}_1$ and $\P=\P_0\otimes \P_1$.
\smallskip

We continue making precise the meaning  of a solution of \eqref{FS}. A field $U^\ep$ solves \eqref{FS} if 
\be\label{UepVepWep}
U^\ep_t(x,\omega)= \ep V_{\frac{t}{\ep^2}}(\dfrac{x}{\ep},\omega_0) +  W^\ep(x,t,\omega)= V^\ep_{t}(x,\omega_0) +  W^\ep(x,t, \omega),
\ee
%
with $V$ and $W^\ep$ solving  respectively \eqref{l1} and 
\be\label{W}
\ds \partial_t W^\ep_t = {\rm div} \left( \hat a^\ep (D W^\ep_t,x,t,\omega)\right) \ \  {\rm in} \ \ \R^d\times (0,+\infty) \quad W^\ep_0= u_0 \ \ \text{in} \ \ \R^d,
\ee
where  
\be\label{W1}
\hat a^\ep (p,x,t,\omega) = \hat a (p,\dfrac{x}{\ep}, \dfrac{t}{\ep^2},\omega), %
\ee
and
\be\label{W2}
 \hat a (p,x,t,\omega)= 
\mathcal A( p+DV_{t}(x,\omega_0), x,t,\omega_1)-DV_{t}(x,\omega_0). 
\ee
Note that $\hat a$ is strongly monotone and Lipschitz continuous in the first variable, uniformly with respect to the other variables, and satisfies \eqref{Hypa3} (thanks to Lemma \ref{lem.boundDV}). 
\smallskip
 
We say that $W^\ep:\R^d\times [0,T]\times \Omega\to \R $ is a solution of \eqref{W},
if  it  is measurable in $\omega$ for each $(x,t)$,   $W^\ep_\cdot(\cdot, \omega) \in L^2([0,T], H^1_{\rho_\theta})\cap C^0([0,T], L^2_{\rho_\theta})$ $\P-$a.s. with $\rho_\theta$ defined in \eqref{def.rhothetaBIS}, and it  satisfies \eqref{W} 
in the sense of distributions. It is easily checked that such a solution exists and is unique.


\begin{thm} \label{main.homogenization} Assume \eqref{B}, \eqref{A}, \eqref{omega1}, and \eqref{F}. 
Then there exists a strongly monotone and Lipschitz continuous vector filed  $\bar a :\R^d \to \R^d$ 
such that, for any $u_0\in L^2(\R^d)$, the solution $U^\ep$ of \eqref{FS} converges to the solution of the homogenized problem 
\be\label{eq.limiteq}
\partial_t \ol u= {\rm div} (\bar a(D\ol u)) \ \ {\rm in} \ \R^d\times (0,\oo) \quad \ol u(\cdot,0)=u_0 \ \ \text{in} \ \ \R^d,
\ee
in the sense that, for any $T>0$, 
$$
\lim_{\ep\to 0} \E\left[ \int_0^T \int_{\R^d}  | U^\ep_t(x)-u(x,t)|^2\rho_\theta(x)  dxdt\right]= 0,
$$
 where $\rho_\theta(x)= \exp\{-\theta(1+|x|^2)^{1/2}\}$. 
\end{thm}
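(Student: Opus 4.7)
The plan is to exploit the decomposition $U^\ep = V^\ep + W^\ep$ from \eqref{UepVepWep} and reduce \eqref{FS} to an application of the quasilinear homogenization Theorem~\ref{thm.homo}, at the price of replacing the non-stationary field $V$ by its stationary attractor $Z$. I first dispose of the noise part: by the $\Z^d$-stationarity of $V$ in space and \eqref{ineq.timecontBIS} from Lemma~\ref{lem.boundDV}, one has $\E[|V^\ep_t(x)|^2] = \ep^2\,\E[|V_{t/\ep^2}(x/\ep)|^2] \lesssim \ep^2 g_d(t/\ep^2)$, where $g_d(s)=1$ if $d\geq 3$, $g_d(s)=\log(1+s)$ if $d=2$, and $g_d(s)=s^{1/2}$ if $d=1$. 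In every case $\sup_{t\in[0,T]} \ep^2 g_d(t/\ep^2)\to 0$, so integration against $\rho_\theta\,dx\,dt$ gives $\E[\int_0^T\!\int_{\R^d}|V^\ep|^2\rho_\theta]\to 0$.

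For the second step, I construct a stationary surrogate problem and homogenize it. Letting $Z$ be the stationary process of Theorem~\ref{thm:main}, set
\[ \tilde a(p,x,t,\omega) := \mathcal A\bigl(p + Z_t(x,\omega_0), x,t,\omega_1\bigr) - Z_t(x,\omega_0). \]
Strong monotonicity and Lipschitz continuity of $\mathcal A$ in its first argument transfer directly to $\tilde a$; the joint stationarity of $\mathcal A$ and $Z$ makes $\tilde a$ stationary on the product environment $(\Omega,\mathcal F,\P)$; and the uniform $L^2$ bound on $Z$ from Theorem~\ref{thm:main}, combined with the stationarity and smoothness of $\mathcal A(0,\cdot,\cdot,\cdot)$, yields $\tilde a(0,\cdot,\cdot,\cdot)\in {\bf L^2}$. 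Letting $\tilde W^\ep$ solve $\partial_t \tilde W^\ep = \div(\tilde a^\ep(D\tilde W^\ep,x,t,\omega))$ with datum $u_0$, Theorem~\ref{thm.homo} produces $\tilde W^\ep\to\ol u$ in $L^2_{\rho_\theta}(\R^d\times(0,T))$ in expectation, where $\ol u$ is the solution of \eqref{eq.limiteq} with $\bar a$ defined from $\tilde a$ via \eqref{def.barabara}.

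It remains to compare $W^\ep$ and $\tilde W^\ep$. They share the initial datum $u_0$, and their coefficients satisfy the pointwise estimate $|\hat a(p,y,\tau,\omega) - \tilde a(p,y,\tau,\omega)|\leq (C_0+1)|DV_\tau(y,\omega_0)-Z_\tau(y,\omega_0)|$ thanks to the Lipschitz bound on $\mathcal A$. Testing the equation for $W^\ep-\tilde W^\ep$ against $(W^\ep-\tilde W^\ep)\rho_\theta$, using the strong monotonicity of $\hat a$ and Young's inequality on the error term, a weighted Gronwall argument (modelled on Lemma~\ref{lem.estiuep}) delivers
\[ \E\left[\int_0^T\!\!\int_{\R^d}|W^\ep_t-\tilde W^\ep_t|^2\rho_\theta\right] \lesssim \E\left[\int_0^T\!\!\int_{\R^d}|DV_{t/\ep^2}(x/\ep)-Z_{t/\ep^2}(x/\ep)|^2\rho_\theta(x)\,dx\,dt\right], \]
after which the triangle inequality combines the three ingredients into the desired conclusion.

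The hard part will be the vanishing of this last right-hand side. The attractor property \eqref{attractor} yields $\E[\int_{Q_1}|DV_t-Z_t|^2]\to 0$ as $t\to\infty$; joint $\Z^d$-spatial stationarity of $(DV,Z)$ upgrades this to uniform-in-$x$ decay after cube-averaging. The parabolic rescaling $t\mapsto t/\ep^2$ sends any fixed $t>0$ to infinity, so dominated convergence -- with a uniform envelope supplied by \eqref{ineq.timecont} and \eqref{lkjnefjjn} -- produces the result, modulo a harmless initial layer $t\in(0,\ep^2)$ absorbed by the $L^1$-in-$t$ integrability of the envelope. All remaining ingredients are immediate from the results of Sections~\ref{sec:linearpb} and \ref{sec.homogen}.
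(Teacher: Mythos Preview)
Your proposal is correct and follows essentially the same route as the paper: the decomposition $U^\ep=V^\ep+W^\ep$, the elimination of $V^\ep$ via \eqref{ineq.timecontBIS}, the replacement of $W^\ep$ by the stationary-coefficient surrogate $\tilde W^\ep$ through a weighted energy/Gronwall estimate driven by the attractor property \eqref{attractor} (this is exactly Lemma~\ref{lem.uhzbqensd}), and the homogenization of $\tilde W^\ep$ via Theorem~\ref{thm.homo}. The only point you leave implicit that the paper makes explicit is the definition of the joint shift $\tilde\tau_{(k,s)}(\omega_0,\omega_1)=\bigl((\omega_0^{l+k}(s+\cdot))_l,\tau_{(k,s)}\omega_1\bigr)$ on the product space, needed so that $\tilde a$ is stationary in an ergodic environment and Theorem~\ref{thm.homo} applies; otherwise the arguments coincide.
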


The proof is a combination of the  results of the previous sections. 
\smallskip

The first step consists in replacing the non-stationary in time process $DV^\ep$ by the space-time stationary random field   $Z$  constructed in  Theorem \ref{thm:main}. To keep the notation in the statement simpler,  we introduce the maps $\tilde a^\ep$ and $\tilde a$ which are defined as
\be\label{W3}
\tilde a^\ep (p,x,t,\omega) = \tilde a (p,\dfrac{x}{\ep}, \dfrac{t}{\ep^2},\omega), %
\ee
and
\be\label{W4}
 \tilde a (p,x,t,\omega)= 
\mathcal A( p+Z_{t}(x,\omega_0), x,t,\omega_1)-Z_{t}(x,\omega_0).
\ee

\begin{lem}\label{lem.uhzbqensd}  Assume \eqref{B}, \eqref{A}, \eqref{omega1}, and  \eqref{F}, 
and let   $W^\ep$ and  $\tilde W^\ep$ be   respectively  solutions of \eqref{W} with $\hat a^\ep$ as in \eqref{W1} and 
\be\label{eq.tildeW}
 \partial_t \tilde W^\ep =  \div \Big(\tilde a^\ep (\tilde W^\ep, x, t, \omega)\Big) \ \ \text{in} \ \ \R^d\times (0,\oo) \ \ \  \tilde W^\ep_0= u_0 \ \ \text{in} \ \ \R^d,
 \ee
 with $\tilde a^\ep$ given by \eqref{W3}. 
 Then, for any $\theta>0$,  
\be\label{uhzbqensd}
\lim_{\ep\to 0} \sup_{t\in [0,T]} \E\left[ \int_{\R^d} |W^\ep (x,t)-\tilde W^\ep (x,t)|^2\rho_\theta(x) dx\right] =0.
 \ee

\end{lem}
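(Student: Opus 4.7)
The plan is to derive an energy estimate for the difference $D^\ep := W^\ep - \tilde W^\ep$ and combine it with the attractor property of Theorem \ref{thm:main}. Since $W^\ep(\cdot,0) = \tilde W^\ep(\cdot, 0) = u_0$, the function $D^\ep$ satisfies $D^\ep(\cdot,0) = 0$ and, in the sense of distributions,
\[
\partial_t D^\ep \;=\; \div\bigl[\hat a^\ep(DW^\ep,x,t,\omega) - \tilde a^\ep(D\tilde W^\ep,x,t,\omega)\bigr].
\]
I split the flux as $[\hat a^\ep(DW^\ep) - \hat a^\ep(D\tilde W^\ep)] + E^\ep$, where $E^\ep := \hat a^\ep(D\tilde W^\ep,\cdot,\cdot,\cdot) - \tilde a^\ep(D\tilde W^\ep,\cdot,\cdot,\cdot)$. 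The first bracket inherits strong monotonicity from $\mathcal A$, and, in view of \eqref{W2}, \eqref{W4} and the Lipschitz continuity of $\mathcal A$,
\[
|E^\ep(x,t,\omega)| \;\leq\; (C_0+1)\,|DV_{t/\ep^2}(x/\ep,\omega_0)-Z_{t/\ep^2}(x/\ep,\omega_0)|.
\]

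Testing the equation for $D^\ep$ against $D^\ep\rho_\theta$, using the bound $|D\rho_\theta|\lesssim\theta\rho_\theta$, and applying Cauchy--Schwarz and Young's inequality to absorb the gradient term as in the proof of Lemma \ref{lem.estiuep}, I will obtain
\[
\frac{d}{dt}\int_{\R^d}|D^\ep|^2\rho_\theta \;+\; \frac{1}{C}\int_{\R^d}|DD^\ep|^2\rho_\theta
\;\leq\; C\int_{\R^d}|D^\ep|^2\rho_\theta \;+\; C\int_{\R^d}|E^\ep|^2\rho_\theta,
\]
for some $C = C(\theta,C_0,d)$. Taking expectations, applying Gronwall's inequality and invoking $D^\ep(\cdot,0)=0$ yields
\[
\sup_{t\in[0,T]}\E\Big[\int_{\R^d}|D^\ep_t|^2\rho_\theta\,dx\Big] \;\lesssim\; e^{CT}\int_0^T \E\Big[\int_{\R^d}|E^\ep_s|^2\rho_\theta\,dx\Big]\,ds.
\]

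The key step is then to quantify the right-hand side. The change of variables $y=x/\ep$, the stationarity in integer translations of $DV_s - Z_s$, and the elementary Riemann-sum bound $\sum_k \sup_{Q_1(k)}\rho_\theta(\ep\,\cdot)\lesssim \ep^{-d}$ give
\[
\E\Big[\int_{\R^d}|E^\ep_s|^2\rho_\theta\Big] \;\lesssim_\theta\; \E\Big[\int_{Q_1}|DV_{s/\ep^2}(y)-Z_{s/\ep^2}(y)|^2\,dy\Big].
\]
The attractor estimate established in the proof of Theorem \ref{thm:main} (through Lemma \ref{lem.Cauchy} with $n=0$) combined with the universal bound $\E[|DV_r|^2]+\E[|Z_r|^2]\lesssim 1$ of Lemma \ref{lem.boundDV} implies that the right-hand side is $\lesssim \min(1,r^{-\alpha})$ at $r=s/\ep^2$, with $\alpha:=1\wedge(d/4)>0$.

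It remains to evaluate $\int_0^T\min(1,(s/\ep^2)^{-\alpha})\,ds = \ep^2\int_0^{T/\ep^2}\min(1,r^{-\alpha})\,dr$, which is bounded by $\ep^2$ if $\alpha>1$, by $\ep^2\,|\log\ep|$ if $\alpha=1$, and by $\ep^{2\alpha}$ if $\alpha<1$; in all cases it vanishes as $\ep\to 0$, giving \eqref{uhzbqensd}. The main technical obstacle is the admissibility of testing against $D^\ep\rho_\theta$: since $\hat a^\ep$ is not time-stationary (as $V_t$ is not), Lemma \ref{lem.estiuep} does not apply verbatim to $W^\ep$. However, the uniform-in-time bound $\sup_x \E|DV_t(x)|^2\lesssim 1$ of Lemma \ref{lem.boundDV} provides the square integrability of $a^\ep(0,\cdot,\cdot,\cdot)$ needed to repeat the proof of Lemma \ref{lem.estiuep} and place $W^\ep\in L^2([0,T],H^1_{\rho_\theta})$, thus rendering all the integrations by parts legitimate.
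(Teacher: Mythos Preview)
Your proof is correct and follows essentially the same approach as the paper: derive a weighted energy inequality for $W^\ep-\tilde W^\ep$ using the monotonicity and Lipschitz continuity of $\mathcal A$, reduce the forcing term to $\E[\int_{Q_1}|DV_{s/\ep^2}-Z_{s/\ep^2}|^2]$ via spatial stationarity, and conclude by Gronwall. The only minor difference is that in the final step you invoke the explicit decay rate from Lemma~\ref{lem.Cauchy} to control the time integral, whereas the paper appeals directly to the qualitative attractor property \eqref{attractor} together with the uniform bound and dominated convergence.
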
 

\begin{proof} Let  $V^\ep(x,t))= DV_{\frac{t}{\ep^{2}}}(\dfrac{x}{\ep})$ and  $Z^\ep_t(x)= Z_{\frac{t}{\ep^{2}}}(\dfrac{x}{\ep})$.
\smallskip

Using the strong monotonicity and Lipschitz continuity of $\mathcal A$  as well as \eqref{W1} and \eqref{W2} 
 we find, after some routine calculations,  that, for some constants $C>0$,  
\begin{align*}
& \frac{d}{dt} \E\left[\int_{\R^d}  (W^\ep_t-\tilde W^\ep_t)^2\rho_\theta dx\right] \leq 
-  \E\left[\int_{\R^d}   |D(W^\ep_t-\tilde W^\ep_t)|^2\rho_\theta  dx\right] \\
& \qquad +C \E\left[\int_{\R^d}  (W^\ep_t-\tilde W^\ep_t)^2\rho_\theta  dx\right]
+C \E\left[\int_{\R^d}   |DV^\ep_t- Z^\ep_t |^2\rho_\theta  dx\right].
\end{align*}
Since  $DV$ and $Z$ are stationary in space, we find
$$
\E\left[\int_{\R^d}   |DV^\ep_t- Z^\ep_t |^2 \rho_\theta dx\right] \leq 
C_\theta \E\left[ \int_{Q_1}|DV_{\ep^{-2}t}(x)-Z_{\ep^{-2}t}(x)|^2 dx \right],
$$
with the right hand side  bounded and converging, in view of \eqref{attractor},  to $0$ for $t>0$.
\smallskip

We conclude using  Gronwall's inequality. 

\end{proof}

\begin{proof}[The proof of Theorem \ref{main.homogenization}]
It now remains to show that  \eqref{FS} homogenizes. 
\smallskip

On $\Omega$ we define the ergodic measure preserving group $\tilde \tau :\Z^d\times \R\times \Omega\to \Omega$ by 
$$
\tilde \tau_{k,s} \omega= (\omega_0^{l+k}(s+\cdot), \tau_{k,s}\omega_1)
$$
for any
$
\omega=(\omega_0,\omega_1)= ((\omega_0^l)_{l\in \Z^d}, \omega_1)\in \Omega=(C^0(\R,\R^d))^{\Z^d} \times \Omega_1. 
$
\smallskip

Set 
$$
a(p,x,t, \omega)= \mathcal A( p+ Z_t(x,\omega_0), x,t,\omega_1) - Z_t (x,\omega)
$$
and note that  $a$ satisfies \eqref{T} and \eqref{a2}.
\smallskip

Then, in view of  Theorem~\ref{thm.homo},  the vector field $\ol a$  is strongly monotone and Lipschitz continuous 
and the solution $\tilde W^\ep$ of \eqref{eq.tildeW} converges, for all $\theta>0$,  $\P-$a.s. in $L^2_{\rho_\theta}(\R^d\times [0,T])$  and in $L^2_{\rho_\theta}(\R^d\times [0,T]\times \Omega)$  to the solution $\ol u$ of  \eqref{eq.limiteq}. 
\smallskip

Finally we return  to $U^\ep$. In view of \eqref{UepVepWep}, for any $\theta>0$,  we have
\begin{align*}
& \E\left[ \int_0^T\int_{\R^d} |U^\ep_t(x)- u(x,t)|^2\rho_\theta(x) dx dt \right] \\ 
& \leq 2 \E\left[ \int_0^T\int_{\R^d} |\ep V_{\ep^{-2}t}(\ep^{-1}x)|^2\rho_\theta(x)dx dt \right]
+ 2\E\left[ \int_0^T\int_{\R^d} |W^\ep_t(x)- \tilde W^\ep_t(x) |^2\rho_\theta(x)dx dt \right] \\
& \qquad +2 \E\left[ \int_0^T\int_{\R^d} |\tilde W^\ep_t(x)- u(x,t)|^2\rho_\theta(x)dx dt \right]. 
\end{align*}
In view of \eqref{ineq.timecontBIS}, Lemma \ref{lem.uhzbqensd} and Theorem \ref{thm.homo}, the right hand side of the inequality above tends to $0$ as $\ep\to 0$.

\end{proof}

\appendix
\section{}

We summarize here with proofs  results about stationary gradients, which are needed in the paper. Some of them appear in the literature  in different structures and with stronger assumptions.
\smallskip 

The following  is classical in the literature (see for instance the proof of Theorem 5.3 of \cite{KoVa}). We give a proof here because the environment has not exactly the same structure as in \cite{KoVa} and the maps here have lower regularity in time. 
 
\begin{lem} \label{append.lem2} Assume that  $(\Omega,{\mathcal F},\P)$ be a probability space endowed with an ergodic group of measure preserving maps $\tau:\Z^d\times \R\times \Omega\to \Omega$, and, for $i=1,\ldots,d$ and $t \in\R$, let  
${\mathcal G}_i$ and ${\mathcal G}_t$ be respectively the $\sigma-$algebra of sets $A\in {\mathcal F}$ such that, for any $k\in \Z$, 
 $\P[A\Delta (\tau_{(ke_i,0)}A)]=0$,  
and  the $\sigma-$algebra of sets $A\in {\mathcal F}$ such that, for any $s\in \R$,  $\P[A\Delta (\tau_{(0,s)}A)]=0$.
If $u:\R^d\times\R \times \Omega\to \R$ has space-time stationary weak derivatives $Du$ and $\partial_t u$ 
 such that 
\begin{align*}
&\E[\int_{\tilde Q_1}|Du|^2]<+\infty,  \ \ \E[\int_{\tilde Q_1}Du]=0, \ \ \E[\int_{0}^{1}\|\partial_t u(\cdot,t)\|_{H^{-1}(Q_1)}^2dt]<+\infty,\\
& \E[\int_{0}^{1}\lg \partial_t u(\cdot,t),1\rg_{H^{-1}(Q_1),H^1(Q_1)}dt]=0 \ \  \text{and} \ \ 
\int_{Q_1}u dx =0 \ \  \P-\text{a.s.},
\end{align*}
then, for any $i=1,\dots, d$ and any $(z,t)\in \R^d\times  \R$, 
$$
\E\left[ \int_{Q_1} \partial_{x_i}u(\cdot+z,t)\ \Bigl|\ {\mathcal G}_i\right]=0 \ \ \text{and} \ \ \E\left[ \lg \partial_t u(\cdot +z,t), 1\rg_{H^{-1}(Q_1),H^1(Q_1)}\ \Bigl|\ {\mathcal G}_t\right] =0.
$$
\end{lem}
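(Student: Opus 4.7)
The proof follows the classical Kozlov--Varadhan strategy for stationary-gradient random fields, adapted to the space-time setting. We focus on the spatial claim at $z=0$ and a fixed $t$; the general $(z,t)$ case follows by translation, using that $\mathcal G_i$ is invariant under every $\tau_{(\ell,s)}$, and the time claim is considerably easier (handled at the end).

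Set $F(\omega):=\int_{Q_1}\partial_{x_i}u(y,t,\omega)\,dy$ and $\tau^k_i:=\tau_{(ke_i,0)}$. Birkhoff's ergodic theorem applied to the $\Z$-action generated by $\tau_i$ gives $\frac{1}{N}\sum_{k=0}^{N-1}F\circ\tau^k_i\to\E[F\mid\mathcal G_i]$ in $L^1(\Omega)$. On the other hand, the partial sum telescopes, via stationarity of $\partial_{x_i}u$, into a boundary integral:
\[
\sum_{k=0}^{N-1}F\circ\tau^k_i(\omega) = \int_{(-1/2,1/2)^{d-1}}\bigl[u(N-\tfrac12,\hat y,t,\omega)-u(-\tfrac12,\hat y,t,\omega)\bigr]\,d\hat y.
\]
Stationarity of $Du$ combined with $\int_{Q_1}u(\cdot,t,\omega)\,dy=0$ yields the cocycle decomposition $u(\cdot+k,t,\omega)=u(\cdot,t,\tau^k_i\omega)+C(k,t,\omega)$, where $C(k,t,\omega):=\int_{Q_1(k)}u(y,t,\omega)\,dy$. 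A Poincar\'e/trace estimate (using $\int_{Q_1}u=0$ and $\E\int_{Q_1}|Du|^2<\infty$) places the slice $G(\omega):=\int_{(-1/2,1/2)^{d-1}}u(-\tfrac12,\hat y,t,\omega)\,d\hat y$ in $L^2(\Omega)$, so that the boundary integral reads $G\circ\tau^N_i-G+C(Ne_i,t,\cdot)$. The first two terms divided by $N$ vanish in $L^1$ by stationarity of $G$, so the matter reduces to showing $C(Ne_i,t,\cdot)/N\to 0$ in $L^2$.

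The cocycle identity for $C$ and a second application of Birkhoff yield $C(Ne_i,t,\cdot)/N\to g:=\E[C(e_i,t,\cdot)\mid\mathcal G_i]$ in $L^2$, and the crux is to check that $g$ is $\P$-a.s.\ constant. Invariance of $g$ under spatial shifts $\tau_{(\ell,0)}$ follows from the cocycle identity $C(Ne_i,t,\tau_{(\ell,0)}\omega)=C(Ne_i,t,\omega)+C(\ell,t,\tau^{Ne_i}\omega)-C(\ell,t,\omega)$, the last two terms being $o(N)$ in $L^2$ since the $L^2$-norm of $C(\ell,t,\cdot)$ is preserved by $\tau^{Ne_i}$. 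For time shifts $\tau_{(0,s)}$, the identity $u(\cdot,t,\tau_{(0,s)}\omega)=u(\cdot,t+s,\omega)$ (which follows from $\int_{Q_1}u(\cdot,r)\,dy=0$ for every $r$) yields $g\circ\tau_{(0,s)}=g_{t+s}$ defined analogously; the difference $g_{t+s}-g_t$ is controlled by $\frac{1}{N}\int_0^s\langle\partial_t u(\cdot,r),\phi_N\rangle\,dr$ for $\phi_N$ a smooth cutoff of $1_{Q_1(Ne_i)}$, whose $L^2$-norm is uniformly bounded in $N$ by spatial stationarity of $\partial_t u$, and hence divides to zero. Ergodicity of the full $\Z^d\times\R$-action then forces $g$ to be constant, and $\E[g]=\E[C(e_i,t,\cdot)]=\int_{Q_1(e_i)}\E[u(y,t,\cdot)]\,dy=0$ (the $\Z^d$-periodic function $y\mapsto\E[u(y,t,\cdot)]$ has zero integral over $Q_1$ by hypothesis), so $g\equiv 0$.

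The second claim is immediate by Birkhoff for the continuous-time $\R$-action: $\frac{1}{T}\int_0^T\langle\partial_t u(\cdot,s),1\rangle_{H^{-1}(Q_1),H^1(Q_1)}\,ds\to\E[\langle\partial_t u(\cdot,0),1\rangle\mid\mathcal G_t]$, while the integral on the left equals $\int_{Q_1}u(\cdot,T,\omega)\,dy-\int_{Q_1}u(\cdot,0,\omega)\,dy=0$ by hypothesis, making the conditional expectation vanish outright. The main technical obstacle in the spatial argument is the verification of time-invariance of $g$: since $\partial_t u$ is only in $H^{-1}$ in space it cannot be paired with the sharp indicator of $Q_1(Ne_i)$, and the smoothing step requires care to preserve the uniform-in-$N$ $L^2$-bound via spatial stationarity.
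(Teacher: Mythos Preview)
Your approach is correct in outline but takes a genuinely different route from the paper. You compute the conditional expectation as a Birkhoff limit of $\frac{1}{N}\sum_k F\circ\tau_i^k$, rewrite the sum as a boundary/cocycle term, and then run a second ergodic-limit-plus-invariance argument to show the cocycle $C(Ne_i,t,\cdot)/N$ vanishes. The paper instead never identifies the limit explicitly: it pairs $\partial_{x_1}u$ against an arbitrary bounded stationary $\mathcal G_1$-measurable test function $\xi$ and shows, by a single telescoping identity, that $\E[\int_{\tilde Q_1}\partial_{x_1}u(\cdot+z,\cdot+s)\,\xi]$ is independent of $(z,s)$ (hence deterministic by ergodicity, hence zero). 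The key point of the paper's argument is that after pairing with $\xi$, each of the three pieces of the decomposition of $u(x+ne_1+z,t+s)-u(x,t)$ has expectation \emph{independent of $n$}---the $\mathcal G_1$-measurability of $\xi$ lets one undo the shift by $ne_1$ in the expectation---so dividing by $n$ kills them immediately, with no need to control $u$ itself at distant points. Your route, by contrast, must track the growth of $u$ (through the cocycle $C$) and then argue separately that the limit $g$ is invariant under all space-time shifts; this is where the low time-regularity of $\partial_t u$ bites you and forces the smoothing step you flag. Both arguments need a mollification to pair $\partial_t u$ with an indicator (the paper does this too, with $\zeta_\delta$), but the paper's test-function framing makes the rest of the bookkeeping shorter. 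Your observation that the time claim is essentially trivial once one knows $\int_{Q_1}u(\cdot,t)=0$ for every $t$ is a nice simplification over the paper's ``similar and omitted'' remark, though note that the hypothesis as stated is slightly ambiguous on this point and the general $z\in\R^d$ (not $\Z^d$) requires rerunning the Birkhoff argument with a shifted $F_z$ rather than literally ``translating''.
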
 

\begin{proof} To fix the ideas we prove the result for $i=1$.
\smallskip

Fix $(z,s)\in \R^{d}\times \R$ and let $\xi:\R^d\times \R\times \Omega\to \R$ be bounded, stationary, and ${\mathcal G}_1-$measurable. 
For any  $n\in \N$ large, we have 
\begin{align*}
& \int_{\tilde Q_1} (u(x+ne_1+z,t+s)-u(x,t))\xi(x,t)dxdt\\
&  = \sum_{l=0}^{n-1}\int_{\tilde Q_1}\int_0^1\partial_{x_1}u(x+le_1+re_1+z,t+s)\xi(x,t)dxdtdr\\ 
& + \int_{\tilde Q_1}(u(x+z,t+s)-u(x,t))\xi(x,t)dxdt.
\end{align*}
It follows from the  stationarity of $\partial_{x_1}u$ and the  ${\mathcal G}_1-$measurability of $\xi$ that 
\begin{align*}
&\E\left[ \int_{\tilde Q_1} (u(x+ne_1+z,t+s)-u(x,t))\xi(x,t)dxdt\right] \\
& =  n\E\left[ \int_0^1\int_{\tilde Q_1}\partial_{x_1}u(x+re_1+z,t+s)\xi(x,t)dxdtdr  \right] \\ 
& \qquad\qquad +\E\left[  \int_{\tilde Q_1}(u(x+z,t+s)-u(x,t))\xi(x,t)dxdt \right]\\
& = n\E\left[\int_{\tilde Q_1}\partial_{x_1}u(x+z,t+s)\xi(x,t)dxdt  \right] + 
\E\left[  \int_{\tilde Q_1}(u(x+z,t+s)-u(x,t))\xi(x,t)dxdt\right],
\end{align*}
the last two lines following  from the $\Z-$periodicity of $s\to \E\left[\partial_{x_1}u(x+se_1+z,t+s)\xi(x,t)dxdt \right] $. 
\smallskip

Hence 
\begin{align*}
\underset{n\to \infty}\lim \dfrac{1}{n} \E\Big[ \int_{\tilde Q_1} & (u(x+ne_1+z,t+s)-u(x,t))\xi(x,t)dxdt \Big]\\
 & = \E \Big[\int_{\tilde Q_1}\partial_{x_1}u(x+z,t+s)\xi(x,t) dxdt  \Big] .
\end{align*}

On the other hand, 
\begin{align*}
& \E\left[\int_{\tilde Q_1} (u(x+ne_1+z,t+s)-u(x,t))\xi(x,t)dxdt \right] =\\
&  \qquad \qquad \qquad \qquad  \E\left[\int_{\tilde Q_1} (u(x+ne_1,t)-u(x,t))\xi(x,t)dxdt \right] \\ 
&\qquad \qquad \qquad \qquad + \E\left[\int_{\tilde Q_1} (u(x+ne_1+z,t+s)-u(x+ne_1+z,t))\xi(x,t)dxdt \right]\\
&\qquad \qquad \qquad \qquad + \E\left[\int_{\tilde Q_1} (u(x+ne_1+z,t)-u(x+ne_1,t))\xi(x,t)dxdt \right].
\end{align*}
The goal is to divide by $n$ and let $n\to +\infty$. The left-hand side and the first term in the right-hand side have a limit given by the previous equality. 
\smallskip

We  show next that the two remaining terms  after divided by $n$ tend to $0$.
\smallskip

 In order to use the time regularity of $u$, we need to regularize in space the indicatrix function of $Q_1$. Let $\zeta_\delta \in C^\infty_c(Q_1)$ with $\|1-\zeta_\delta \|_{L^{2}(Q_1)}\leq \delta$. 
 \smallskip
 
 Then, using the stationarity of $\partial_t u$ and the fact that $\xi$ is ${\mathcal G}_1-$measurable, we find 
\begin{align*}
& \E\left[\int_{\tilde Q_1} (u(x+ne_1+z,t+s)-u(x+ne_1+z,t))\zeta_\delta(x) \xi(x,t)dxdt \right] \\ &
= 
\E\left[\int_{-1/2}^{1/2} \int_0^{s} \lg \partial_t u(\cdot+ne_1+z,t+s'), \zeta_\delta \xi(\cdot,t)\rg_{H^{-1},H^1} ds'dt \right]\\
&= \E\left[\int_{-1/2}^{1/2} \int_0^{s} \lg \partial_t u(\cdot+z,t+s'), \zeta_\delta \xi(\cdot,t)\rg_{H^{-1},H^1} ds'dt \right], 
\end{align*}
Thus,  
\begin{align*}
& \E\left[\int_{\tilde Q_1} (u(x+ne_1+z,t+s)-u(x+ne_1+z,t))\zeta_\delta(x) \xi(x,t)dxdt \right] \\ 
& \qquad =\E\left[\int_{\tilde Q_1} (u(x+z,t+s)-u(x+z,t))\zeta_\delta(x) \xi(x,t)dxdt \right] , 
\end{align*}
and, after  letting $\delta\to 0$, 
\begin{align*}
& \E\left[\int_{\tilde Q_1} (u(x+ne_1+z,t+s)-u(x+ne_1+z,t)) \xi(x,t)dxdt \right] \\ 
& \qquad = \E\left[\int_{\tilde Q_1} (u(x+z,t+s)-u(x+z,t)) \xi(x,t)dxdt \right] . 
\end{align*}
Similarly, using the stationarity of $Du$, we get 
$$
\E\left[\int_{\tilde Q_1} (u(x+ne_1+z,t)-u(x+ne_1,t))\xi(x,t)dxdt \right]
=
\E\left[\int_{\tilde Q_1} (u(x+z,t)-u(x,t))\xi(x,t)dxdt \right]. 
$$
It follows that, for any $(z,s)\in \R^d\times \R$ and  any ${\mathcal G}_1-$measurable $\xi$,
$$
\E\left[ \int_{\tilde Q_1}(\partial_{x_1}u(x+z,t+s)-\partial_{x_1}u(x,t))\xi(x,t)dxdt \right]=0
$$
Hence,  the map 
\[(z,s)\to \E\left[ \int_{\tilde Q_1}(\partial_{x_1}u(x+z,t+s)dxdt\ |\ {\mathcal G}_1\right]\] 
is $\P-$a.s constant.  Since it is also  stationary in an ergodic environment, it must also be constant in  $\omega$ and, as it has a zero expectation, it has to be equal to $0$. 

The proof of the time derivative follows is similar and, hence, we omit it.

\end{proof}

We  discuss next the sublinearity of maps with stationary derivatives.
%
%

\begin{lem} \label{lem.appendix} Let $(\Omega,{\mathcal F},\P)$ and $u:\R^{d+1} \times \Omega\to \R$ be as in Lemma \ref{append.lem2}. 
Then, $\P-$a.s. and in expectation, 
$$
\underset{R\to \oo} \lim R^{-(d+2)} \int_{Q_R} |u(x,0)|^2 dxdt =0 \ \  {\rm and}\ \  \underset{R\to \oo} \lim R^{-(d+3)} \int_{\tilde Q_R} |u(x,t)|^2 dxdt.
$$
\end{lem}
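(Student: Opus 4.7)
I aim to establish sublinearity of $u$ in the $L^2$-mean, a classical consequence of having stationary mean-zero derivatives. I will prove the second bound; the first follows by a parallel argument once $u$ is smoothed so as to make sense of the time slice $u(\cdot,0)$.

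First, by convolving $u$ in space-time with a smooth compactly supported kernel $K^\ep$ on $\R^{d+1}$, I obtain $u^\ep$ whose derivatives $Du^\ep$ and $\partial_t u^\ep$ are smooth, stationary and mean-zero, with $u^\ep\to u$ in $L^2_{\text{loc}}(\R^{d+1})$ $\P$-a.s. Hence it suffices to prove the bound for $u^\ep$ uniformly in $\ep$. I introduce the rescaled family
\[
U^R(X,T):=R^{-1}u^\ep(RX,RT)\quad\text{on}\ \tilde Q_1;
\]
the identity $\|U^R\|^2_{L^2(\tilde Q_1)}=R^{-(d+3)}\int_{\tilde Q_R}|u^\ep|^2$ reduces the claim to showing $U^R\to 0$ in $L^2(\tilde Q_1)$ as $R\to\infty$, $\P$-a.s.

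By the multi-parameter ergodic theorem applied to the stationary $|Du^\ep|^2$,
\[
\|\nabla_X U^R\|^2_{L^2(\tilde Q_1)}=R^{-(d+1)}\int_{\tilde Q_R}|Du^\ep|^2\underset{R\to\infty}{\longrightarrow} \E\Big[\int_{\tilde Q_1}|Du^\ep|^2\Big]<\infty,
\]
and a parallel rescaling of the $H^{-1}_x$-norm bounds $\partial_T U^R$ in $L^2(I_1;H^{-1}(Q_1))$ uniformly in $R$. Combined with the Poincar\'e inequality on $Q_1$ and the normalization $\int_{Q_1}u^\ep\,dx=0$, these give a uniform $L^2(\tilde Q_1)$-bound on $U^R$. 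The Aubin-Lions lemma then yields relative compactness of $\{U^R\}_R$ in $L^2(\tilde Q_1)$. Any limit $U^*$ satisfies $\nabla_X U^*=0$ (as the weak limit of the rescaled stationary mean-zero field $Du^\ep(R\cdot,R\cdot)$, invoking Lemma~\ref{append.lem2}) and $\partial_T U^*=0$ similarly, so $U^*$ is a constant.

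The crucial step, which I expect to be the main obstacle, is to identify this constant as zero, since $u$ itself is not stationary and the ergodic theorem cannot be applied to it directly. I plan to rewrite $(U^R)_{\tilde Q_1}=R^{-(d+2)}\int_{\tilde Q_R}u^\ep$ as a Riemann-type sum over unit-cube translates $\tilde Q_1+(k,s)$, and to control the telescoping increments
\[
\int_{\tilde Q_1+(k,s)}u^\ep-\int_{\tilde Q_1}u^\ep=\int_{\tilde Q_1}\int_0^1\bigl(k\cdot Du^\ep+s\,\partial_t u^\ep\bigr)\bigl(x+\sigma k,t+\sigma s\bigr)\,d\sigma\,dx\,dt,
\]
which are built from the stationary mean-zero fields $Du^\ep$ and $\partial_t u^\ep$. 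An application of the ergodic theorem to these stationary pieces, together with a variance estimate and a Borel-Cantelli argument to upgrade to almost sure convergence, will yield $(U^R)_{\tilde Q_1}\to 0$, hence $U^*=0$. Passing $\ep\to 0$ then gives the full conclusion.
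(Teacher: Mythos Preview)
Your compactness framework (rescale, bound derivatives by the ergodic theorem, apply Aubin--Lions, conclude any limit is constant) is correct and is exactly what the paper does. The difficulty, as you correctly flag, is identifying the constant as zero, i.e.\ showing that the spatial-temporal average $R^{-(d+2)}\int_{\tilde Q_R}u\to 0$. There are two genuine gaps in how you propose to handle this.

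First, your step ``Poincar\'e plus the normalization $\int_{Q_1}u^\ep\,dx=0$ gives a uniform $L^2(\tilde Q_1)$ bound on $U^R$'' is circular: the normalization holds only on the unit cube at the origin, not for $U^R$ on $\tilde Q_1$. Poincar\'e controls $\|U^R-(U^R)_{\tilde Q_1}\|_{L^2}$ by the derivative terms, but you still need $(U^R)_{\tilde Q_1}\to 0$ before you have any $L^2$ bound at all. So the mean must be dealt with first, and then feeds both the $L^2$ bound and the identification of the limit.

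Second, your plan for the mean --- telescoping increments plus ``a variance estimate and a Borel--Cantelli argument'' --- does not go through in a merely ergodic environment: no mixing is assumed, so no variance decay is available, and the coefficients $k,s$ in your increments grow to size $R$, which the bare ergodic theorem cannot absorb. This is precisely the place where Lemma~\ref{append.lem2} earns its keep. The paper uses the vanishing of the \emph{conditional} expectations $\E[\int_{Q_1}\partial_{x_i}u\,|\,\mathcal G_i]=0$ and $\E[\langle\partial_t u,1\rangle\,|\,\mathcal G_t]=0$ to invoke the Kosygina--Varadhan sublinearity argument (their Theorem~5.3) and obtain first the spatial $L^1$ sublinearity $\int_{Q_R}|u^\ep(x,0)|\,dx\to 0$; it then integrates by parts in time against a cutoff $\xi_R$ to upgrade this to $\int_{\tilde Q_R}u^\ep\,\xi_R\to 0$. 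With the mean in hand, Poincar\'e gives the uniform $L^2$ bound, and the compactness argument closes. In short: replace your variance/Borel--Cantelli sketch by the conditional-expectation input of Lemma~\ref{append.lem2} together with the Kosygina--Varadhan $L^1$ sublinearity, and reorder so that the mean is handled before Poincar\'e.
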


The above result can also be formulated  as follows. Let $u^\ep(x,t,\omega)= \ep u(x/\ep, t/\ep,\omega)$. Then, for any fixed $R>0$, $\P-$a.s. and in expectation, 
$$
\underset{\ep\to 0}\lim \int_{ Q_R} |u^\ep(x,0)|^2dxdt =0\ \  {\rm and}\ \ 
\underset{\ep\to 0}\lim  \int_{\tilde Q_R} |u^\ep(x,t)|^2dxdt =0.
$$
Note that, here, the scaling is hyperbolic in contrast with what we did throughout the paper.

\begin{proof}
In view of Lemma~\ref{append.lem2},  we can apply Theorem 5.3 of \cite{KoVa} to the map $x\to u(x,0)$ to infer that, for any $R>0$ and $\P-$a.s.  
\be\label{lzkejqnsf}
\lim_{\ep\to 0} \int_{Q_R} |u^\ep(x,0)|dx = 0. 
\ee
In \cite{KoVa}, the problem  is stationary with respect to any (space) translation, while here the problem is $\Z^d-$ stationary. However, a careful inspection of the proof of Theorem 5.3 in \cite{KoVa} shows that the result still holds in our setting, the key point of the proof in \cite{KoVa} being precisely the statement in Lemma \ref{append.lem2}.
\smallskip
 
Let $\xi_R\in C^\infty_c(\R^d; [0,1])$ be such that $\xi_R= 1$ in $Q_{R}$, $\xi_R=0$ in $\R^{d+1}\setminus Q_{R+1}^c$ and  $\|D\xi_R\|_\infty\leq 2$. Then, after an integration by parts in time, we have 
\begin{align*}
\int_0^{R/2} \int_{Q_R} u^\ep(x,t) \xi_R(x) dxdt  = \int_{Q_R} u^\ep(x,0) \xi_R(x) dxdt  - \int_0^{R/2} (1-t) \lg \partial_t u^\ep(\cdot,t),  \xi_R\rg_{H^{-1}, H^1} dt.  
\end{align*}
In view of  \eqref{lzkejqnsf}, the first term in the right-hand side tends to $0$ as $\ep\to 0$, while, since $q(k,s,\omega)=\lg \partial_t u(\cdot, t), 1\rg_{H^{-1}(Q_1),H^1(Q_1)}$ is stationary,   the ergodic theorem
also implies that the second term in the right-hand side has a  $\P-$a.s limit., which again does not depend on $\omega$ and, therefore,  has to be zero since  $\E[\lg \partial_t u,1\rg_{H^{-1},H^1}]=0$. 
\smallskip

It follows that, $\P-$a.s.,
\begin{align*}
\limsup_{\ep\to 0} \int_0^{R/2} \int_{Q_R} u^\ep(x,t) \xi_R(x) dxdt  = 0. 
\end{align*}
Applying  similar arguments on the time interval $[-T,0]$, we also find that, $\P-$a.s.,
\be\label{lsekjcv}
\lim_{\ep\to 0} \int_{\tilde Q_R} u^\ep(x,t)  \xi(x) dx dt =0.
\ee

Next,  we claim that there exists a constant $C$ such that,  $\P-$a.s.,
$$
\limsup_{\ep\to 0} \int_{\tilde Q_R}(u^\ep(x,t))^2dxds \leq C. 
$$
Indeed, set
$$
\lg u^\ep \rg_{\xi_R} := (\int_{\tilde Q_R} \xi_R(x)dxdt)^{-1} \int_{\tilde Q_R} u^\ep(x,t)\xi_R(x)dxdt, 
$$
and observe that a minor  generalization of the classical Poincar\'{e} 's inequality yields, for some $C_R$ which depends on $\xi_R$, 
\begin{align*}
&\int_{\tilde Q_R} (u^\ep(x,t))^2dxdt  \leq 2 \int_{\tilde Q_R}(u^\ep(x,t)-\lg u^\ep\rg_{\xi_R})^2 dx +2 R^{d+1}  \lg u^\ep\rg_{\xi_R}^2\\ 
& \leq  C_R [ \int_{\tilde Q_R}|Du^\ep(x,t)|^2 dxdt +\int_{-R/2}^{R/2} \|\partial_t u^\ep(\cdot, t)\|_{H^{-1}(Q_R)}^2 dt]
+ 2 R^{d+1}  \lg u^\ep\rg_{\xi_R}^2,
\end{align*}
Then, the ergodic Theorem and  \eqref{lsekjcv}, give that, $\P-$a.s., 
$$
\limsup_{\ep\to 0} \int_{\tilde Q_R} (u^\ep(x,t))^2dx \leq  C_R \E\left[   \int_{\tilde Q_1}|Du(x,t)|^2 dxdt + \int_{-1/2}^{1/2}\|\partial_t u(t)\\|_{H^{-1}(Q_1)}^2dt \right]. 
$$

To summarize,  we have shown  that there exists $\Omega_0\subset \Omega$ on which,  for every $R>0$ and $\P-$a.s., 
\be
\label{m20}
\text{the family $(u^\ep)_{\ep>0}$ is  bounded respectively in  $L^2([-R,R], H^1(Q_R))$,}
\ee
and
\be\label{ma21}
\text{the family $(\partial_t u^\ep)_{\ep>0}$ is  bounded   in  $L^2([-R,R], H^{-1}(Q_R))$.}
\ee

It follows that, for any $\omega\in \Omega_0$, the family   $(u^\ep)_{\ep>0}$ is relatively compact in $L^2_{loc}(\R^{d+1})$. 
\smallskip

Let $(u^{\ep_n})_{n\in \N}$ be a  sequence which converges in $L^2_{loc}(\R^{d+1})$ to some $u\in L^2([-R,R], H^1(Q_R))$ with  $\partial_t u$ in $L^2([-R,R], H^{-1}(Q_R))$. Since, as $\ep \to 0$,  $Du^\ep \rightharpoonup 0$ and $\partial_t u^\ep \rightharpoonup 0$, $u$ is  a constant, which, in view of  \eqref{lsekjcv},  must be zero. 
\smallskip

It follows that, as $n\to \infty$ and in  $L^2_{loc}(\R^{d+1})$, $u^{\ep_n} \to 0$, and, therefore that, as $\ep \to 0$ and $\P-$a.s.,  $u^\ep\to 0$ in $L^2_{loc}(\R^{d+1})$, and, by the estimate above, in expectation. 
\smallskip

The claim for $u^\ep(\cdot,0)$ follows similarly and with a  simpler argument, hence, we omit it.

\end{proof}

\bibliographystyle{siam}

\end{document}